\newtheorem{theorem}{Theorem}
\newtheorem{lemma}{Lemma}
\newtheorem{corollary}{Corollary}
\newtheorem{proposition}{Proposition}
\theoremstyle{conjecture}
\theoremstyle{definition}
\newtheorem{definition}{Definition}
\theoremstyle{question}
\newtheorem{question}{Question}
\theoremstyle{questions}
\theoremstyle{remark}
\newtheorem{remark}{Remark}
\theoremstyle{remarks}
\theoremstyle{example}
\newtheorem{example}{Example}
\numberwithin{equation}{section}
\begin{document}
\title{Homeomorphisms of $S^1$ and Factorization}

\author{Mark Dalthorp}
\address{910 NW Jones Avenue, Albany, OR 97321}
\email{markdalthorp@email.arizona.edu}

\author{Doug Pickrell}
\address{Mathematics Department, University of Arizona, Tucson, AZ 85721}
\email{pickrell@math.arizona.edu}

\begin{abstract} For each $n>0$ there is a one complex parameter family of
homeomorphisms of the circle consisting of linear fractional transformations
`conjugated by $z \to z^n$'. We show that these families are free of relations, which determines
the structure of `the group of homeomorphisms of finite type'. We next
consider factorization for more robust groups of homeomorphisms.
We refer to this as root subgroup factorization (because the factors correspond to root subgroups). We are
especially interested in how root subgroup factorization is related to triangular factorization (i.e. conformal welding), and correspondences between smoothness properties of the homeomorphisms and decay properties of the root subgroup parameters.
This leads to interesting comparisons with Fourier series and the theory of Verblunsky coefficients.

\end{abstract}

\maketitle

\setcounter{section}{-1}

\section{Introduction}

In this paper we consider the question of whether it is possible to factor an orientation preserving homeomorphism of the circle, belonging to a given group, as a composition of `linear fractional transformations conjugated by $z \to z^n$'. What we mean by factorization depends on the group of homeomorphisms we are considering. In the introduction we will start with the simplest classes of homeomorphisms and build up. For algebraic homeomorphisms, factorization is to be understood in terms of generators and relations.
For less regular homeomorphisms factorization involves limits and ordering, and in particular is highly asymmetric with respect to inversion.

\subsection{Diffeomorphisms of Finite Type}

Given a positive integer $n$ and $w_n \in\Delta:=\{w\in \mathbb C : \vert w\vert < 1\}$, define a function
$\phi_n : S^1 \to S^1$ by
\begin{equation}
\phi_n(w_n; z) := z\frac{(1 + \bar{ w}_nz^{-n})^{1/n}}
{(1 + w_nz^n)^{1/n}},\quad  \vert z\vert=1\end{equation}
It is straightforward to check that $\phi_n \in Diff(S^1)$, the group of orientation preserving
diffeomorphisms of $S^1$, and $\phi_n^{-1}(z)= \phi_n(-w_n; z)$. If $n$ is fixed, the group generated by
the $\phi_n$, as $w_n$ varies, is isomorphic to the $n$-fold covering of $PSU(1,1)$. We will refer to the subgroup of $Diff(S^1)$ generated by the $\phi_n$, as $w_n$ and $n$ vary, as the group of diffeomorphisms of finite type (which we denote by $FTypeDiff$).

\begin{theorem}\label{theorem1} (a) If $m$ and $n$ are relatively prime, then the set of diffeomorphisms
$\{\phi_m(w_m), \phi_n(w_n) : w_m,w_n \in \Delta\}$ generates a dense subgroup of $Diff(S^1)$ (with the standard $C^{\infty}$ Lie group structure).

(b) If $\sigma$ is a diffeomorphism of finite type, then $\sigma$ has a unique factorization
$$\sigma=\lambda\circ\phi_{i_n}(w_{i_n})\circ..\circ \phi_{i_1}(w_{i_1})  $$
where $\lambda \in S^1$ is a rotation, $w_{i_j} \in\Delta\setminus\{0\}$, $j=1,..,n$, and
 $i_j\ne  i_{j+1}, j = 1,..n-1$,
for some $n$.
\end{theorem}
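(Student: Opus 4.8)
We outline the argument. Part (a) is handled through the infinitesimal action, and part (b) by showing that the group of homeomorphisms of finite type is the free product of the one-parameter subgroups $G_n:=\langle\phi_n(w):w\in\Delta\rangle$, amalgamated over the common rotation subgroup.

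\emph{Part (a).} First I would expand $\phi_n(\epsilon;z)=z+\tfrac1n(\bar\epsilon\,z^{1-n}-\epsilon\,z^{1+n})+O(|\epsilon|^2)$, so that the family $\{\phi_n(w):w\in\Delta\}$ is a two‑dimensional submanifold through the identity, tangent (after complexification) to the span of the Witt generators $L_{\pm n}$, where $L_k:=z^{k+1}\partial_z$. The Lie subalgebra of $\mathrm{Vect}(S^1)_{\mathbb C}$ generated by $L_{\pm n}$ and $L_{\pm m}$ contains $L_0$ (a nonzero multiple of $[L_n,L_{-n}]$) and, using $\gcd(n,m)=1$, reaches every $L_k$ by iterated brackets: the indices one can produce generate the subgroup $n\mathbb Z+m\mathbb Z=\mathbb Z$, and the structure constants $k-j$ are nonzero because one can always bracket generators of distinct index. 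Thus the generated Lie algebra is dense in $\mathrm{Vect}(S^1)$, so the closure of the generated subgroup is a closed subgroup of the connected group $\mathrm{Diff}(S^1)$ whose (closed) Lie algebra is all of $\mathrm{Vect}(S^1)$, hence equals $\mathrm{Diff}(S^1)$. The one delicate step is the passage from a dense subalgebra to a dense subgroup, which I would justify via the regularity of $\mathrm{Diff}(S^1)$.

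\emph{Part (b), existence.} The basic structural fact is $p_n\circ\phi_n(w)=\phi_1(w)\circ p_n$ with $p_n(z)=z^n$, so $G_n$ is precisely the full preimage in $\mathrm{Diff}(S^1)$ of $\mathrm{PSU}(1,1)$ under ``quotient by $p_n$''; in particular $G_n\supseteq\mathbb T$ (the rotation group), and every $g\in G_n$ has a unique expression $g=\lambda\circ\phi_n(w)$ with $\lambda$ a rotation and $w\in\Delta$, lifting the unique factorization of an element of $\mathrm{PSU}(1,1)$ as a rotation followed by some $\phi_1(w)$. Together with the identity $R_\alpha\circ\phi_n(w)=\phi_n(\bar\alpha^{\,n}w)\circ R_\alpha$, this drives a reduction algorithm: given any word in the $\phi$'s, repeatedly merge two consecutive equal‑index factors into a single element of that $G_n$, rewrite it as (rotation)$\circ\phi_n(w)$, slide the rotation to the far left, and discard any $\phi_n(0)=\mathrm{id}$. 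Each step strictly lowers the number of $\phi$‑factors, so the process terminates in the asserted normal form.

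\emph{Part (b), uniqueness.} The preceding gives a canonical surjection $\ast_{\mathbb T}G_n\to\{\text{finite type}\}$, and uniqueness of the factorization is exactly the statement that it is injective — that the families are free of relations. Besides the description of $G_n$, this needs $G_n\cap G_m=\mathbb T$ for $n\ne m$: an element of the intersection has Laurent support in $(1+n\mathbb Z)\cap(1+m\mathbb Z)=1+\operatorname{lcm}(n,m)\mathbb Z$, so one of its quotients through $p_n$ or $p_m$ is a Möbius automorphism of $\Delta$ with vanishing constant Laurent coefficient, forcing that quotient — hence the element — to be a rotation. Granting this, pushing all rotations out of a reduced word reduces injectivity to the single assertion: \emph{no nontrivial reduced word $\phi_{i_k}(w_k)\circ\cdots\circ\phi_{i_1}(w_1)$, with all $w_j\ne0$, $i_j\ne i_{j+1}$ and $k\ge1$, is a rotation.} I expect this to be the main obstacle. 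I would attack it by analytic continuation: a finite‑type diffeomorphism is real‑analytic, and $\phi_n(w;z)=\bigl(\phi_1(w;z^n)\bigr)^{1/n}$ continues holomorphically off $S^1$ with branch points of order $n$ on the circles $|z|=|w|^{\pm1/n}$ when $n\ge2$, whereas $\phi_1(w)$ is Möbius. If such a reduced word $\sigma$ equaled a rotation $R_\rho$, then $\sigma$ and $z\mapsto\rho z$ agree on $S^1$, hence (identity theorem) throughout $\sigma$'s domain of analytic continuation, so $\sigma$ has no branch points; but after stripping off any index‑$1$ factors at the two ends — a move that shortens the word and only replaces ``rotation'' by ``Möbius'' — one may assume $i_k\ge2$, and then continuing $\sigma$ inward from $S^1$, at the first point $z_\ast\in\Delta$ where the partial composition $\phi_{i_{k-1}}(w_{k-1})\circ\cdots\circ\phi_{i_1}(w_1)$ meets a zero of $\phi_{i_k}(w_k;\cdot)$ the outermost factor produces a branch point of order $i_k$, a contradiction. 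The technical heart is to make this branch point genuinely unremovable — to check such a $z_\ast$ occurs inside the annulus of analyticity before any earlier singularity, that the partial composition is noncritical there, and to dispose of the exceptional (proper, real‑analytic) parameter set where this bookkeeping fails, for instance by instead examining the outer circle $|z|=|w_k|^{-1/i_k}$ or by an analyticity argument in parameter space. With ``no nontrivial reduced word is a rotation'' in hand, the standard normal‑form theory of amalgamated free products yields both the uniqueness in (b) and the identification of the group of finite type with $\ast_{\mathbb T}G_n$.
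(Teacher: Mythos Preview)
Your treatment of (a) is essentially the paper's: both reduce to showing that $L_{\pm n},L_{\pm m}$ generate the Witt algebra when $(n,m)=1$, and then pass from the Lie algebra to the group by a closure argument. The paper uses the Trotter and commutator limit formulas for $\exp(t(X+Y))$ and $\exp(t^2[X,Y])$; you invoke regularity of $\mathrm{Diff}(S^1)$, to the same effect.

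For (b), your structural setup---identifying $G_n$ with the $n$-fold cover $PSU(1,1)^{(n)}$, checking $G_n\cap G_m=\mathbb T$ for $n\ne m$, and recasting uniqueness as ``no nontrivial reduced word is a rotation''---matches the paper's reformulation exactly. Your Fourier-support argument for the intersection is in fact more explicit than what the paper writes. The divergence is in the proof of the key ``no relations'' assertion, and here your proposal has the gap you yourself flag.

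Your plan is local: analytically continue off $S^1$ and argue that the outermost factor $\phi_{i_k}$ (with $i_k\ge2$ after stripping M\"obius ends) contributes a branch point that survives in the composition. The obstacles you list---that the inner word might reach its own singularities before the relevant circle, might be critical at the preimage, or that ramification might cancel---are genuine, and for long words the bookkeeping of ``which singularity is encountered first as one leaves $S^1$'' is delicate. The parameter-space fallback you suggest is not obviously adequate to close it.

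The paper sidesteps all of this by replacing the local branch-point analysis with a \emph{global} invariant. It observes that every finite-type homeomorphism is a branch on $S^1$ of an algebraic function and assigns to it the degree of that function (equivalently, the number of sheets of the associated compact Riemann surface over the sphere). An inductive sheet count then yields, for a reduced word $\phi_{i_k}\circ\cdots\circ\phi_{i_1}$ with all $w_{i_j}\ne0$ and $i_j\ne i_{j+1}$,
\[
\mathrm{degree}\;=\;\prod_{j=1}^{k} i_j \Big/ \prod_{j=1}^{k-1}\gcd(i_j,i_{j+1}).
\]
This exceeds $1$ unless $k=0$ or $(k,i_1)=(1,1)$; in the latter case $\phi_1(w_1)$ with $w_1\ne0$ is M\"obius but not a rotation. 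Since a rotation has degree $1$, the contradiction is immediate, with no need to locate any individual branch point or worry about cancellation. Degree is exactly the invariant that packages ``total branching'' globally, so one can read the paper's argument as the clean completion of your heuristic: rather than chasing a single ramification point through the composition, count sheets once and for all.
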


This is proven in Section \ref{finitetype}.

\begin{remark}\label{remark1} (a) $\phi_n$ satisfies the reality condition $\phi_n(z^*)=\phi_n(z)^*$ (where $z\to z^*$
is complex conjugation) if and only if $w_n$ is real. For the subgroup of diffeomorphisms of finite type which satisfy this reality condition (and which can be identified with a group of homeomorphisms of an open oriented string
parameterized by $[0,\pi]$), the theorem implies that this subgroup is isomorphic to a countable free product
$\mathbb R *\mathbb R*\mathbb R*..$.

(b) The (universal central extension of the) Lie algebra of the group of diffeomorphisms of a circle is essentially
a real form of the Virasoro algebra, which in some ways is similar to a
Kac-Moody algebra (see \cite{[7]}). However the Virasoro algebra notably lacks a (or has a trivial) Weyl group. In this light it is interesting to contrast the absence of relations in Theorem \ref{theorem1} to the relations for the algebraic groups associated to Kac-Moody algebras in \cite{KP}, which Kac and Peterson aptly refer to as `analytic continuation of the relations for the Weyl group'.
\end{remark}

\subsection{Algebraic Homeomorphisms}

The set of diffeomorphisms (homeomorphisms) $\sigma$ which are algebraic, i.e. satisfy a polynomial equation $p(z,\sigma(z))=0$, forms a subgroup $AlgDiff(S^1)$ ($AlgHomeo(S^1)$, respectively) of $Homeo(S^1)$. Via a linear fractional transformation which interchanges $S^1$ and $\mathbb R \cup\{\infty\}$, $y=x^3$ corresponds to an algebraic homeomorphism which is not a diffeomorphism.

A diffeomorphism $\sigma=\sigma(z)$ of finite type can be expressed in terms of radicals and hence is an algebraic diffeomorphism. The converse is false. A famous example of an algebraic diffeomorphism which cannot be expressed in terms of radicals is the `Bring Radical', which is a real solution of $y^5+y-x=0$ (with $\mathbb R$ in place of $S^1$). In turn $y=x^3+x$ corresponds to a rational diffeomorphism with an inverse which can be expressed in terms of radicals, and it is not of finite type.

\begin{proposition}\label{algebraic}The group of diffeomorphisms of finite type is properly contained in the group of diffeomorphisms which, together with their inverses, can be expressed in terms of radicals. In turn this group is properly contained in $AlgDiff(S^1)$, and in turn this group is properly contained in $AlgHomeo(S^1)$.
\end{proposition}

\begin{question} How does one characterize the group of diffeomorphisms of finite type, and is there an effective procedure for finding
the factorization in (b) of Theorem \ref{theorem1}?\end{question}

From our point of view, this question is analogous to the question of how to characterize complex trigonometric polynomials on the circle. From a complex perspective trigonometric polynomials are the restrictions of meromorphic functions on the sphere which are regular on the complement of $\{0,\infty\}$. We will adopt this complex perspective.

An algebraic homeomorphism $\sigma$ can be viewed as a multi-valued function on the sphere, or more elegantly as an equivariant meromorphic function defined on an associated Riemann surface with real structure. Finite type diffeomorphisms have the property that (viewed as multi-valued functions) they stabilize (what we will call) the Hardy decomposition
\begin{equation}\label{Hardy}\mathbb P^1=\Delta\sqcup S^1\sqcup \Delta^*\end{equation}
It is tempting to believe that this property might characterize the subgroup $FTypeDiff$ of $AlgDiff(S^1)$. For example we will show that a rational homeomorphism has this property if and only if it is a linear fractional transformation, hence is of finite type. However this is too simplistic. Homeomorphisms of the form $B^{1/n}$, where $B$ is a Blaschke product with $n$ factors, stabilize (\ref{Hardy}), and they are not generally of finite type; in deference to \cite{EKS} and \cite{Younsi}, we will refer to homeomorphisms of this type as `fingerprints of (polynomial) lemniscates', or simply fingerprints. We denote the subgroup of $AlgHomeo$ which stabilize (\ref{Hardy}) by $HardyHomeo$.

Associated to an algebraic homeomorphism satisfying $p(z,w)=0$, there are two Galois groups obtained by writing $w$ ($z$) as a polynomial with coefficients in polynomials in $z$ ($w$, respectively). For a diffeomorphism which, together with its inverse, can be expressed in terms of radicals, these two Galois groups are solvable. This is possibly a characterization of this subgroup of $AlgDiff$, which we will denote by $SolvDiff$ (the group of solvable (algebraic) diffeomorphisms). The fingerprints of lemniscates of the previous paragraph are not generally solvable, hence are not generally of finite type. The various classes of homeomorphisms which we have discussed can be displayed as
$$\begin{matrix} & & AlgHomeo & & \\ & \nearrow & \uparrow & \nwarrow & \\ Fingerprints\subset HardyHomeo &\leftarrow & FTypeHomeo & \rightarrow & SolvHomeo \\
 & & \uparrow & & \\ PolyHomeo\subset RatHomeo& \leftarrow &PSU(1,1) & \rightarrow & RatHomeo^{-1}
\end{matrix} $$
(We apologize for the clumsy notation). Our best guess is that the group of diffeomorphisms of finite type is the intersection of the subgroups of $AlgDiff$ which stabilize (\ref{Hardy}) and are solvable. The basic idea is to use the Holder series for the Galois group associated to a homeomorphism in this intersection to obtain the factorization in Theorem \ref{theorem1}. This is unresolved.

It seems plausible that the $\phi_n(w_n)$ ($w_n\ne 0$) and the rational homeomorphisms of $S^1$ corresponding to the polynomial homeomorphisms $y=x^p$, for odd prime $p$, might generate $SolvHomeo$, with no additional relations beyond those in Theorem \ref{theorem1}. We do not see an obvious candidate for a minimal set of generators to obtain all of $AlgHomeo(S^1)$.

\subsection{Diffeomorphisms}\label{introdiffeomorphisms}

Theorem \ref{theorem1} is a unique factorization result for homeomorphisms of finite type.
In the rest of the paper we are interested in factorization for more robust groups
of homeomorphisms of the circle, and for semigroups of increasing functions on the line.
This involves ordering and taking limits. There will be obvious similarities with linear
Fourier series (with the added complication that we must choose an ordering of the modes)
and with the theory of Verblunsky coefficients.

Fix a permutation (or ordering) of the natural numbers, $p:\mathbb N\to \mathbb N:n\to n'$.
Given a sequence $w = (w_n)\in \prod_{n=1}^{\infty}\Delta$, define
\begin{equation} \sigma_N = \phi_{N'}\circ.. \circ \phi_{1'} \in Diff(S^1)\end{equation}
More explicitly (in particular to emphasize the dependence on parameters)
\begin{equation}\label{factor1} \sigma_N(p,w;z) = z
\prod_{n=1}^N\frac{
(1 + \bar{ w}_{n'}\sigma_{n-1}(z)^{-n'})^{1/n'}}{
(1 + w_{n'}\sigma_{n-1}(z)^{n'})^{1/n'}},\quad \vert z\vert = 1\end{equation}
If $\sum_{n>0}\frac{1}{n}\vert w_n\vert<\infty$ (a condition which does not depend on $p$), then the product (\ref{factor1}) converges absolutely as $N\to \infty$, and
hence the limit is a degree one surjective continuous function $S^1 \to S^1$. It turns out to be a delicate
matter to determine when this limit is an invertible function, hence a homeomorphism of $S^1$; we will address this in the next subsection. We first consider a kind of core result, where invertibility is a minor issue.

\begin{theorem}\label{corelemma} Fix a permutation $p$ as above. For $s=1,2,..$, if $w\in \prod_{n=1}^{\infty}\Delta$
and $\sum_{n>0} n^{s-1}\vert w_n\vert < \infty$,
then for $z \in S^1$ the limit
$$ \sigma(p,w; z) = z\prod_{n=1}^{\infty}\frac{
(1 + \bar w_{n'}\sigma_n(z)^{-n'})^{1/n'}}{
(1 + w_{n'}\sigma_n(z)^{n'})^{1/n'}}$$
exists and $\sigma(z) = \sigma(p,w; z)$ is a $C^s$ homeomorphism of $S^1$ with $C^s$ inverse.
\end{theorem}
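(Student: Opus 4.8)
The plan is to establish convergence of the infinite product and then bootstrap regularity, exploiting the fact that each factor $\phi_{n'}$ is a diffeomorphism that is close to the identity when $|w_{n'}|$ is small. First I would fix notation: write $\sigma_N = \phi_{N'} \circ \cdots \circ \phi_{1'}$ and $\rho_{N} = \sigma \circ \sigma_N^{-1}$ for the ``remainder'' homeomorphism, so that $\sigma = \rho_N \circ \sigma_N$ and $\rho_N$ is itself the limit of an infinite product of $\phi_{n'}$'s for $n > N$. The key quantitative input is an estimate of the form: for $|z| = 1$,
\[
\phi_n(w_n; z) = z \cdot \bigl(1 + \varepsilon_n(z)\bigr), \qquad
\Bigl\| \varepsilon_n \Bigr\|_{C^s(S^1)} \le C_s \, n^{s-1} |w_n|,
\]
which follows by expanding the two $(1 + \cdot)^{1/n}$ factors in~(0.1); each derivative hitting $z^{\pm n}$ produces a factor of $n$, and after $s$ derivatives one has a bound $C_s n^{s-1}|w_n|$ (the $1/n$ in the exponent cancels one power of $n$, and the remaining $s-1$ powers come from the leftover derivatives; smaller denominators $1 - |w_n|$ are harmless since $|w_n|$ is eventually small). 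Summability of $\sum n^{s-1}|w_n|$ then makes these deviations summable in $C^s$.

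Next I would prove convergence in $C^0$ first (this is essentially the hypothesis $\sum \frac{1}{n}|w_n| < \infty$ noted before the theorem, which is implied for $s \ge 1$), so that $\sigma_N \to \sigma$ uniformly and $\sigma$ is a continuous degree-one map of $S^1$. Then I would run an induction on $s$. For the inductive step I would differentiate the recursion
\[
\sigma_N(z) = \phi_{N'}\bigl(w_{N'}; \sigma_{N-1}(z)\bigr)
\]
using the chain rule, writing $\sigma_N' = (\phi_{N'}' \circ \sigma_{N-1}) \cdot \sigma_{N-1}'$ and more generally expressing $\partial^s \sigma_N$ via Fa\`a di Bruno in terms of $\partial^{\le s}\phi_{N'}$ evaluated at $\sigma_{N-1}$ and $\partial^{\le s}\sigma_{N-1}$. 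Combined with the estimate above, this yields a recursion of the shape $\|\sigma_N\|_{C^s} \le \bigl(1 + C_s n^{s-1}|w_{N'}|\bigr)\|\sigma_{N-1}\|_{C^s} + (\text{lower-order in } \sigma)$, and since $\prod (1 + C_s n^{s-1}|w_n|) < \infty$, the $C^s$ norms stay bounded; a parallel argument on differences $\sigma_N - \sigma_M$ shows $(\sigma_N)$ is Cauchy in $C^s$, hence $\sigma \in C^s$. Invertibility is then cheap: choose $N$ large enough that $\sum_{n > N} C_1 n^0 |w_n| \cdot (\text{const}) < 1$, so $\rho_N$ has derivative uniformly bounded away from $0$ and is a $C^1$ (indeed $C^s$) diffeomorphism; since each $\phi_{n'}$ with $n \le N$ is a diffeomorphism, $\sigma = \rho_N \circ \sigma_N$ is a composition of finitely many diffeomorphisms and a diffeomorphism, hence a $C^s$ diffeomorphism of $S^1$.

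The main obstacle I expect is the bookkeeping in the Fa\`a di Bruno step: one must track how the highest-derivative term $\partial^s \sigma_{N-1}$ propagates (it enters linearly, with coefficient $\phi_{N'}'\circ\sigma_{N-1} \approx 1 + O(n^{s-1}|w_{N'}|)$ after accounting for all derivatives on the outer function up to order $s$) while controlling the genuinely nonlinear lower-order terms by the already-established $C^{s-1}$ bounds. A clean way to organize this is to set $M_N^{(s)} = \max_{j \le s}\|\sigma_N\|_{C^j}$ and show $M_N^{(s)} \le (1 + a_{N})M_{N-1}^{(s)} + b_{N}$ with $\sum a_N, \sum b_N < \infty$ (here $a_N, b_N = O(n^{s-1}|w_{n'}|)$, using the $s=1,\dots,s-1$ cases to absorb everything except the linear top term), from which boundedness and then the Cauchy property follow by the same telescoping/Grönwall-type estimate. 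Uniqueness of the $C^s$ limit and independence from $p$ of the convergence (though not of $\sigma$ itself) are then immediate from the $C^0$ statement already recorded in the text.
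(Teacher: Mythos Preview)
Your proposal is correct and follows essentially the same strategy as the paper: both arguments rest on the estimate $|\phi_n^{(k)}-\delta_{k,1}|\le C_k\,n^{k-1}|w_n|$, apply Fa\`a di Bruno to the iterated composition, and close an induction on $s$ using that the Bell-polynomial terms of lower order are controlled by the inductive hypothesis, with invertibility coming from positivity of the derivative. The one organizational difference worth noting is that the paper works not with $\sigma_N$ directly but with the logarithm $B_N=\ln\Sigma_N'=\sum_{n=1}^N B_{n'}\circ\Sigma_{n-1}$, which converts the chain-rule \emph{product} into a \emph{sum}; then one simply shows the series $\sum (B_{n'}\circ\Sigma_{n-1})^{(s-1)}$ converges absolutely, and $\Sigma'=e^B>0$ gives invertibility for free. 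This replaces your Gr\"onwall-type recursion $M_N^{(s)}\le (1+a_N)M_{N-1}^{(s)}+b_N$ and the separate tail argument for $\rho_N$ by a single absolute-convergence check, and spares you the bookkeeping of proving the Cauchy property in $C^s$ directly. Your route works, but the logarithm trick is the cleaner packaging of the same estimates.
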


To prove this we will use the inverse
function theorem to show that the inverse of $\sigma$ exists and has the same degree of smoothness
as $\sigma$. In general
$$\sigma_N^{-1}=\phi_{1'}(-w_{1'})\circ\phi_{2'}(-w_{2'}) \circ..\circ \phi_{N'}(-w_{N'})$$
This does not have an expression analogous to (\ref{factor1}) which is as useful in understanding
convergence (because the composition is now growing to the right).

This leads to a basic stumbling block.

\begin{question} \label{coreconjecture} Fix a permutation $p$ as above. Is the map
$$S^1 \times \left(\mathbf c^{\infty}\cap \prod_{n=1}^{\infty}\Delta \right)\to Diff(S^1):(\lambda;w) \to \lambda\sigma(p,w; z)$$
a bijection, where $ \mathbf c^{\infty}$ is the Frechet space of rapidly decreasing sequences?
\end{question}

Although this question is unresolved, for the sake of convenience, we will refer to the parameters $w_n$ as root subgroup coordinates, relative to the ordering $p$ (see Section \ref{background} for the origin of the terminology).

The subproblem of whether this map is locally a bijection onto a neighborhood of the identity should obviously be compared to
the corresponding problem for the exponential map, which is a standard counterexample to the inverse function theorem in a Frechet space setting (see e.g. Section 3.3 of \cite{PS}).

\subsection{Less Regular Homeomorphisms}

A pivotal problem is to identify a robust criterion for $\sigma(w)$ to be invertible. A glance at the formula (\ref{derivative}) below for the derivative of $\Sigma$ suggests that $w\in l^2$ might be such a criterion.

\begin{theorem}\label{dalthorp1} If $w\in l^2 \cap \prod_{n=1}^{\infty}\Delta$ and the phases of the $w_n$ are independent uniformly distributed random variables, then almost surely $\sigma(w)$ is a homeomorphism of $S^1$.
\end{theorem}

It is not true that $w\in l^2$ implies $\sigma(w)$ is surely invertible. The point of the next two subsections is to identify the edge where deterministic conditions fail.

\subsubsection{Holder Perspective}

In a Holder setting, the best deterministic result we can hope for is Theorem \ref{corelemma}: if $w\in l^1$, then $\sigma(w)$ is a $C^1$ homeomorphism, and in particular $\sigma(w)$ is invertible. A relevant fact is that for $0<s<1$, $C^{s}$ is a Banach algebra, but it is not closed with respect to composition. This means it is not possible to filter homeomorphisms, as groups, in the Holder sense for $s<1$ (see the Appendix for more background). However this does not (to our minds) fully explain why there does not exist a weaker deterministic condition which implies mere invertibility of $\sigma(w)$. The existence of the following transition seems surprising.

\begin{theorem}\label{dalthorp2} (a) If $\limsup\limits_{n \rightarrow\infty} (n |w_n|) < \frac{1}{2}$, then $\sigma$ is invertible.

(b)	If $w_n>0$ and $\liminf\limits_{n\rightarrow\infty}(n w_n) > \frac{1}{2}$, then $\sigma$ is not invertible.
\end{theorem}

Note that $w_n=1/n$ is very comfortably $l^2$, and part (b) implies that $\sigma(w)$ is not invertible (It is also possible to check this numerically in a convincing way). The moral is that a special alignment of phases can prevent invertibility. There is a similar phenomenon which occurs in the theory of Verblunsky coefficients (see Section 2.7 of \cite{Simon}, in particular Example 2.7.5), although the basic issues are complementary, in a very interesting way, as we will explain in Section \ref{Verblunsky}.

\subsubsection{$L^2$-Sobolev Perspective}

$L^2$-Sobolev conditions are better adapted to identify decay conditions on the parameters $(w_n)$ which are equivalent to asserting that the corresponding homeomorphisms form a group, defined by some smoothness condition. The $L^2$-Sobolev analogue of the condition $w\in l^1$ is $w\in \mathbf w^{1/2}$, where in general $\mathbf w^s:=\{w: \sum n^{2s}\vert w_n\vert^2<\infty\}$.

\begin{remark}\label{remark1} If $p>1$ and $s>1/2$, then $l^p\subset \mathbf w^{1/2}$ and $\mathbf w^{s}\subset l^1$. But neither condition implies the other. For example if $w_n=\frac{1}{nlog(n)}$, $n>1$, then $w\in \mathbf w^{1/2}\setminus l^1$, and if $w_{N}=(2/3)^{n}$ when $N=3^n$ and $w_N=0$ otherwise, then the lacunary sequence $w\in l^1\setminus \mathbf w^{1/2}$.\end{remark}

\begin{question} \label{sobolevquestion} Suppose that $s\ge 1/2$. Does there exist a bijective correspondence
\begin{equation}S^1\times \left(\mathbf w^s\cap\prod_{n=1}^{\infty}\Delta\right) \to W^{s+1,L^2}Homeo(S^1):(\lambda,w) \to \lambda\circ \sigma(p,w)\end{equation}
where the target is the group of homeomorphisms which satisfy the condition $ln(\Sigma')\in W^{s}$ (in the $L^2$ sense).
 \end{question}

The critical case $s=1/2$ is far and away the most interesting. We are lacking a proper name which reflects the importance of this group.
Whereas the group $QS(S^1)$ of quasisymmetric homeomorphisms stabilizes the critical $L^2$-Sobolev class $W^{1/2}(S^1)$, $W^{1+1/2}Homeo(S^1)$ stabilizes $W^{1/2}$ and commutes with the Hilbert transform modulo Hilbert-Schmidt operators; consequently, it is the maximal group of homeomorphisms which can be represented in the associated canonical commutation relation Fock space. Put another way, it is the maximal group of homeomorphisms which has a Virasoro extension. From another point of view, $QS(S^1)$ is the symmetry group of Bers's universal Teichmuller space (modeled on a Banach space), and $W^{1+1/2}Homeo(S^1)$ is the symmetry group of the same set equipped with a tighter topology (modeled on a Hilbert space);  see \cite{[13]}.

We have extensively experimented with Question \ref{sobolevquestion}, and its truth seems plausible, at least for tame
orderings such as $p(n)=n$. Note that Theorem \ref{dalthorp1} implies that we cannot hope to find a Sobolev condition which is weaker than $\mathbf w^{1/2}$ and which implies invertibility of $\sigma(w)$ in a sure sense.

Szego settled the Verblunsky analogue of Question \ref{sobolevquestion} in an exemplary way; see Corollary \ref{Szego3} in Section \ref{Verblunsky}.

\subsubsection{Sharpness of Theorem \ref{dalthorp1} }

It turns out that Theorem \ref{dalthorp1} is relatively sharp. For example if $|w_n|=\frac{1}{\sqrt{n}}$ and the phases of the $w_n$
are i.i.d. and uniform, then $\sigma(w)$ exhibits Cantor-like behavior (the general statement is relatively complex; see Subsection
\ref{subsectCantor-like}). For deterministic magnitudes and random phases, this pins down the transition from invertibility to non-invertibility to a relatively narrow window.

\subsection{Random Magnitudes and Phases}\label{applications}

In this subsection we will allow both the magnitudes and the phases of the $w_n$ to be random. To put this in the proper perspective,
we will slightly digress.

There are a number of known interesting probability measures on $Homeo(S^1)$, with diverse origins (e.g. see \cite{AMT2}, \cite{AJKS}, \cite{KS}, part IV of \cite{[8]}, and references). One example is related
to Werner's work on conformally invariant measures on self-avoiding loops on Riemann surfaces. In this case it is of interest
to consider the welding map from topologically nontrivial self-avoiding loops in the punctured plane to homeomorphisms of $S^1$,
\begin{equation}\label{weldingmap}W:Loop^1(\mathbb C\setminus\{0\}) \to Homeo(S^1):\gamma \to \sigma(\gamma):=\phi_-^{-1}\circ \phi_+\end{equation}
where $\phi_{\pm}$ are appropriately normalized uniformizations for the regions interior and exterior to $\gamma$, respectively,
and the image of Werner's measure with respect to this map (see the Introduction to \cite{Chavez} for more detail, and references).
We are not aware of any (e.g. Poisson) geometrical structure which suggests that the image of Werner's measure, or any other natural measure, is a product in terms of the parameters $(w_n)$. However all of the measures alluded to above are related to the critical exponent $s=1/2$; at least in a heuristic sense the group of $W^{1+1/2,L^2}$ homeomorphisms is analogous to a Cameron-Martin type group for these measures. In any event it is interesting to reconsider the invertibility question of the previous subsection in a fully probabilistic background.

Consider a probability measure on $\prod_{n=1}^{\infty}\Delta$ of the form
\begin{equation}\label{mu}\prod_{n=1}^{\infty}\frac{a(n)+1}{\pi} (1-\vert w_n\vert^2)^{a(n)}\vert dw_n\vert\end{equation}
where $a(n)/n\to \beta_0>0$ as $n\to\infty$.

\begin{lemma} With respect to the probability measure (\ref{mu}),
almost surely $w\in \cap_{\epsilon>0}\mathbf w^{-\epsilon}$ and $w\notin l^2$
\end{lemma}

\begin{question}Is $\sigma(p,w)$ almost surely invertible?
\end{question}

In Section \ref{Verblunsky} we consider a Verblunsky analogue of (\ref{mu}), which is not fully understood. In the Verblunsky case it appears that there are two phase transitions. So the answer to this last question is probably complex.

\subsection{Increasing Functions on the Line}\label{increasingfunctions}

We continue to fix a permutation $p$ of $\mathbb N$. We now propose to simply ignore the invertibility question for the limit of the $\sigma_N$, and attempt to imitate the theory of Verblunsky coefficients. In the theory of Verblunsky coefficients,
there is a map
\begin{equation}\label{extendedmap}\prod_{n=1}^{\infty}\Delta \to Prob(S^1)\end{equation}
which induces a homeomorphism from a compactification of the domain to $Prob(S^1)$ with its weak$^*$ topology (see Section \ref{Verblunsky} for background). Our map is less well-behaved ``at infinity".

Suppose that $w\in \prod_{n=1}^{\infty}\Delta$. Write
$$\sigma_N(p,w;e^{i\theta}) = e^{i\Sigma_N(p,w;\theta)}$$
where the lift $\Sigma_N$ is a homeomorphism of $\mathbb R$ satisfying
$$\Sigma_N(\theta + 2\pi) = \Sigma_N(\theta) + 2\pi;$$
$\Sigma_N$ is uniquely determined modulo $2\pi\mathbb Z$. To fix a choice we take
\begin{equation}\label{liftphin}\Phi_n(w_n;\theta):=\theta-\frac2n \Theta(1+w_n e^{in\theta})\end{equation}
where $-\frac{\pi}{2}<\Theta<\frac{\pi}{2}$ is the polar angle. This definition actually makes sense
for $w_n\in D$, the closed unit disk; when $|w_n|=1$, $\Phi_n$ is a step function.
Analogous to (\ref{factor1}),
$$\Sigma_N(\theta):=\Phi_{N'}\circ ...\circ \Phi_{1'}(\theta)= \theta-2\sum_{n=1}^N\frac{1}{n'}\Theta(1+w_{n'}\sigma_{n-1}(e^{i\theta})^{n'})$$
By the chain rule
\begin{equation}\label{derivative}\frac1{2\pi}d\Sigma_N(\theta) =
\left(\prod_{n=1}^N\frac{1-\vert w_{n'}\vert^2}{\vert 1+w_{n'}\sigma_{n-1}(z)^{n'}\vert^2}\right)\frac{d\theta}{2\pi}\end{equation}
This (normalized) differential can be interpreted as a probability measure on $S^1$.

Since $S^1$ is compact, $Prob(S^1)$, the convex set of probability measures with the $\text{weak}^*$ topology relative to $C^0(S^1)$, is compact. Consequently the sequence of probability measures $(\frac{1}{2\pi}d\Sigma_N)$ has $\text{weak}^*$ limits in $Prob(S^1)$.
The question is whether there exists a unique limit. For a sequence of $w_n\in S^1$, it can easily happen that there is
lack of uniqueness of the $\text{weak}^*$ limit. We focus on the typical case.

\begin{theorem}\label{increasingfunction} Fix a permutation $p$ as above. Given $ w\in
\prod_{n=1}^{\infty}D$, if the phases of the $w_n$ are independent and uniform, then
$\frac{1}{2\pi}d\Sigma_N$ has a unique $\text{weak}^*$ limit in $Prob(S^1)$.

\end{theorem}

\subsection{Ordering of Factors and a Missing Plancherel Formula}

Is there anything special about the obvious ordering of factors, $p(n)=n$? One would suspect that restrictions on ordering would be important for almost sure type questions. We do impose a restriction in Subsection \ref{subsectCantor-like}, but it is not clear this is essential. In the theory of root subgroup factorization for loop groups, there is a need for ordering, and there are special orderings, related to factorization in the associated Weyl group. But the special Kac-Moody algebra structure of the loop group setting is lacking in our context.

In root subgroup factorization for loop groups, the analogue of the Plancherel formula is an exact factorization for Toeplitz determinants, or in representation theoretic terms, fundamental matrix coefficients; see e.g. \cite{[9]} and \cite{PP}. This is the key ingredient in the proof of the loop group analogue of Question \ref{sobolevquestion}. There are natural analogues of Toeplitz operators (with a composition operator in place of multiplication operator, see e.g. Subsection \ref{trifact}) and highest weight representations in the present context, but the corresponding determinants and matrix coefficients do not seem to factor exactly, and we do not know how
to control the (what appear to be small) correction terms.

\subsection{Plan of the Paper} In the first section we recall some basic facts about the
Virasoro algebra and group. The Virasoro point of view explains why it is natural to consider the $\phi_n$ as basic building blocks.
We also briefly mention triangular factorization (i.e. conformal welding). Triangular structure does not play a large explicit role in this paper, but our expectation is that it will play a critical role in understanding the inversion question and a (missing) Plancherel formula.

In Section \ref{alghomeo} we introduce the basic structure associated to an algebraic diffeomorphism,
and in Section \ref{finitetype} we will prove Theorem \ref{theorem1}. We will see that (b) of Theorem \ref{theorem1} can be restated in the following way: the group of diffeomorphisms of finite type is the amalgam (i.e. the free product modulo the
rotation subgroup intersection) of the covering groups $PSU(1,1)^{(n)}$, $n=1,2,..$, of
$PSU(1,1)$, the group of linear fractional transformations which stabilize $S^1$.

In Section \ref{alghomeoII} we discuss a few elementary facts about algebraic homeomorphisms and triangular factorization. This is a  classical topic which has recently received a huge boost from \cite{EKS} and \cite{Younsi}.

In Section \ref{proofofa} we prove Theorem \ref{corelemma} (rapidly decreasing coefficients map to smooth homeomorphisms). In this paper we will not address the existence of an inverse map, Questions \ref{coreconjecture} and \ref{sobolevquestion}. In a first version of this paper (on the ArXiv), we outlined an idea of proof, but we have not completed this (the main missing piece of the analytical part of the paper).

In Section \ref{cdfs} we consider increasing functions on the line. In particular we prove
Theorem \ref{increasingfunction} (there is an almost sure map, with respect to random phases, from arbitrary coefficients to increasing functions), and we address other issues which we touched on above in Subsection \ref{increasingfunctions}.

In Section \ref{invertibilitysection} we consider the issue of invertibility of $\sigma(w)$. In the first subsection we consider
deterministic conditions and prove Theorem \ref{dalthorp2}. In subsections 2 and 3 we consider random phases. In subsection 2 we prove Theorem \ref{dalthorp1} ($l^2$ is a sufficient condition for invertibility, in the presence of random phases). In subsection 3 we show that this result, Theorem \ref{dalthorp1}, is sharp.

In Section \ref{Verblunsky} we have included some remarks on how the coefficients $w_n$ compare with Verblunsky
coefficients (denoted $\alpha_n$) from the theory of orthogonal polynomials. For example suppose that $w,\alpha\in l^2 \cap \prod_{n>0}\Delta$. As we have observed above, $\Sigma(w)$ is continuous, hence the corresponding measure $\frac{1}{2\pi}d\Sigma$ does not have atoms, but its support can be a proper subset of $S^1$. By contrast, for the Verblunsky coefficients $\alpha$, the support of the corresponding measure is $S^1$, but the measure can have atoms.

In an Appendix we recall some basic smoothness conditions for homeomorphisms
of $S^1$, from a group theoretic point of view.

\subsubsection{Acknowledgement} We thank Frank Jones for sharing enlightening examples of increasing functions, and we thank
Pavel Gumenyuk for helpful correspondence and references.

\subsection{Basic Notation} Homeomorphisms of $S^1$ are assumed to be orientation
preserving, unless stated otherwise. Given a homeomorphism $\phi$ of $S^1$, there is a
homeomorphism $\Phi$ of $\mathbb R$ such that
$$\phi(e^{i\theta}) = e^{i\Phi(\theta)}$$
$\Phi$ satisfies
\begin{equation}\label{pseudo}\Phi(\theta + 2\pi) =\Phi(\theta) + 2\pi\end{equation}
and is uniquely determined up to the addition of a multiple of $2\pi$. The set of
homeomorphisms $\Phi$ of $\mathbb R$ satisfying $(\ref{pseudo})$ is a realization of the universal covering
group
$$0 \to 2\pi\mathbb Z \to \widetilde{Homeo}(S^1) \to Homeo(S^1) \to 0 $$
where $\Phi$ projects to $\phi$. In the case of $\phi=\phi_n(w_n)$ we singled out a preferred lift $\Phi_n$
in (\ref{liftphin}), and relative to a fixed permutation $p:n\to n'$, this determines a preferred lift
for $\phi=\sigma_N(p,w)$, $\Sigma_N=\Phi_{N'}\circ ...\circ \Phi_{1'}$.

A simple but important observation is that  $\widetilde{Homeo}(S^1)$ is convex, i.e.
if $\Psi_1,..,\Psi_n\in \widetilde{Homeo}(S^1)$ and $\lambda_i$, $1\le i\le n$ are nonnegative and sum to one,
then $\sum \lambda_i\Psi_i\in \widetilde{Homeo}(S^1)$. For example (a branch of) $B^{1/n}$, where $B$ is a Blaschke product with $n$ factors, represents a homeomorphism, because it can be written as
$$B^{1/n}=exp(2\pi i \sum_{j=1}^n\frac1n \Phi_1(w_{1j}))$$
for some $w_{1j}\in \Delta$.

We use $s\ge 0$ to denote order of smoothness, in various senses. If $s = k$, where
$k = 0, 1, 2,..$, then $C^s$ is the space of functions $f$ on $S^1$ such that $f$ is $k$-times
continuously differentiable. If $s = k +\alpha$, where $k=0,1,2,..$ and $0<\alpha< 1$, then
$C^s=C^{k,\alpha}$ is the space of functions $f$ on $S^1$ such that $f$ is $k$-times differentiable
and $f^{(k)}$ satisfies a Holder condition of order $\alpha$. For $s\ge 0$, $C^s$ is a Banach algebra with respect
to the norm
$$|f|_{C^s}=|f|_{C^k}+\sup_{\theta\ne \theta'}\frac{|f^{(k)}(\theta)-f^{(k)}(\theta')|}{|\theta-\theta'|^{\alpha}}$$
$C^s$ is a decomposing algebra for nonintegral $s$, i.e. if $f=\sum f_nz^n\in C^{s}$, then $f_+:=\sum_{n\ge 0}f_nz^n\in C^s$)
(see page 60 of \cite{CG}).

Define $C^{0+}:=\lim_{\alpha\downarrow 0} C^{\alpha}$, viewed as an inductive limit of Banach algebras. $C^{0+}$ is
additionally closed with respect to composition (which is not true for $\alpha$ fixed), and $C^{0+}$ is also a
decomposing algebra.

$ W^s = W^{s;L^2}$ is the space of functions $f$ on $S^1$ which are $L^2$
Sobolev of order $s$.

$[Leb]$ denotes the class of Lebesgue measure.

$(m,n)$ denotes the greatest common divisor of positive integers $m,n$.

\section{Background}\label{background}

\subsection{The Virasoro Algebra} The group of diffeomorphisms of $S^1$ (or more generally, any compact manifold) is a (nonanalytic) Frechet Lie group. The Lie algebra of $Diff(S^1)$ can be identified
with smooth real vector fields on $S^1$, with the negative of the traditional differential
geometric bracket (see \cite{Milnor}). The complexification of this Lie algebra has a universal
central extension by $\mathbb C$. The complex Virasoro algebra is the universal central
extension of the Lie subalgebra of complex trigonometric vector fields on the circle.
As a vector space

$$Vir=(\sum_{n\in\mathbb Z}\mathbb C L_n) \oplus \mathbb C\kappa$$

where $$L_n = ie^{in\theta}\frac{d}{d\theta} = -z^{n+1}\frac{d}{dz} $$

The bracket is determined by the relations
\begin{equation}\label{bracket} [L_n,L_m] = (m-n)L_{n+m} +
\frac{1}{
12}n(n^2-1)\delta(n + m)\kappa; \quad [L_n,\kappa] = 0 \end{equation}
The Virasoro algebra has a triangular decomposition, in the technical sense of \cite{[7]},
$$Vir=\mathfrak n^-\oplus\mathfrak h\oplus\mathfrak n^+, \text{ where }\mathfrak n^{\pm} =\sum_{\pm n>0}\mathbb C L_n \text{ and }\mathfrak h = \mathbb C L_0 \oplus \mathbb C\kappa$$

\begin{remark}\label{firstremark} (a) For many purposes of this paper, the reader can ignore the central extension. The embeddings below
can be viewed simply as embeddings into vector fields of the circle, and so on. But for some purposes
the extension is essential. To distinguish the embeddings we will use hats (e.g. $\widehat i$) when we are mapping into
the central extension (i.e. the Virasoro algebra), and we will drop the hat when we are mapping into vector fields on the circle
(i.e. the Witt algebra).

(b) The roots for the action of $\mathfrak h$ on $Vir$ are of the form $n\alpha_1$, $n\in\mathbb Z$, where $\alpha_1(L_0)=1$,
$\alpha(\kappa)=0$. $\alpha_1$ is the unique simple positive root.
\end{remark}

For each $n > 0$, there is a root subalgebra homomorphism corresponding to $n\alpha_1$
$$d\widehat i_n : sl(2,\mathbb C) \to Vir :\left( \begin{matrix}
0 &0\\
1& 0
\end{matrix}\right)
\to f_n =-\frac1{n}
L_{-n},$$
$$\left( \begin{matrix}
1 &0\\0 &-1\end{matrix}\right)\to h_n =\frac{2}{n}L_0 -\frac{1}{12n}(n^2- 1)\kappa,\text{ and }
\left( \begin{matrix}0 &1\\0 &0\end{matrix}\right)\to e_n =\frac{1}{n}L_n$$
The restriction of $d\widehat i_n$ to $su(1,1)$ is given by
\begin{equation}\label{suembed} di_n :\left(\begin{matrix}i &0\\0&-i\end{matrix}\right)\to ih_n, \quad\left(\begin{matrix}0 &1\\1 &0\\\end{matrix}\right)\to \frac1n L_{n} -\frac1n L_{-n}, \text{ and } \left(\begin{matrix}0 &i\\-i &0\end{matrix}\right)\to\frac {i}{n}L_n + \frac{i}{n}L_{-n}\end{equation}

\begin{remark} In a purely heuristic way, if one thinks of $z\to z^n$ as a Weyl group element, one can perhaps think of $d\widehat i_n$ as a Weyl group conjugate of $d\widehat i_1$. But the thing to note is that this heuristic Weyl group element does not flip a positive root to a negative root, because of (b) of the previous Remark \ref{firstremark}. This is a crucial structural difference between a Kac-Moody algebra and the Virasoro algebra, especially for the purposes of this paper. \end{remark}

\subsection{The Virasoro Group} The group $Diff(S^1)$ has a universal central extension
$$0 \to \mathbb Z \times i\mathbb R \to\widehat{Diff(S^1)} \to Diff(S^1) \to 0$$
Bott observed that the group $\widehat{Diff(S^1)}$ can be realized in the following
explicit way. As a manifold
$$\widehat{Diff(S^1)} =\widetilde{Diff(S^1)}\times i\mathbb R$$
In these coordinates the multiplication is given by
$$(\Phi; it) \cdot (\Psi; is) = (\Phi\circ \Psi; it + is + iC(\phi;\psi))$$
where $C$ is the $\mathbb R$-valued cocycle given by
$$C(\phi;\psi) =\frac1{48\pi}Re\int_{S^1}log(\frac{\partial\phi}{\partial z}\circ \psi )d(log(\frac{\partial\psi}{\partial z}
))$$

The corresponding Lie algebra is the real form of (the smooth completion of) $Vir$
which as a vector space equals $vect(S^1) \oplus i\mathbb R$ with the bracket given by (\ref{bracket}).

\begin{proof} One obtains the
corresponding Lie algebra cocycle via
$$c(\vec{\xi},\vec{\eta}) =\frac{\partial}{\partial s\partial t}\vert_{s=t=0}(C(e^{s\vec{\xi}},e^{t\vec{\eta}})-
C(e^{t\vec{\eta}},e^{s\vec{\xi}}))$$
$$=\frac{i}{24\pi}\int_{S^1}\frac{\partial \xi}{\partial z}d(\frac{\partial \eta}{\partial z})=\frac{i}{24\pi}\int_0^{2\pi}(\tilde{\eta}'''(\theta)+\tilde{\eta}'(\theta))\tilde{\eta}(\theta)d\theta$$
where $\vec{\xi}=\xi(z)\frac{d}{dz}=\tilde{\xi}(\theta)\frac{d}{d\theta}$. This gives the commutation relations in (1.1).\end{proof}

There are Lie group embeddings (root subgroup homomorphisms)
$$\begin{matrix} \widetilde{PSU(1,1)}& \stackrel{\widehat i_n}{\rightarrow} &  \widehat{Diff(S^1)}\\
\downarrow &  & \downarrow \\ PSU(1,1)^{(n)} &\stackrel{i_n}{\rightarrow} & Diff(S^1)\end{matrix}$$
corresponding to the Lie algebra embedding (\ref{suembed}), and the corresponding map into vector fields, where
$\widetilde{PSU(1,1)^{(n)}}$ denotes the universal covering of $PSU(1,1)$. We will
write down the embedding $i_n$
in an explicit way in the next subsection. At the level of diffeomorphisms, it
is understood geometrically as follows. The group of projective transformations of the Riemann sphere
which map the circle to itself is $PSU(1,1) \subset PSL(2,\mathbb C)$, where
$$\left(\begin{matrix}\alpha&\beta\\ \bar{\beta}&\bar{\alpha}\end{matrix}\right)\cdot z'=\frac{\bar{\eta}+\bar{\alpha}z'}{\alpha+\beta z'}$$
For $n \ge 1$ there is an n-fold covering map,
$$S^1 \to S^1 : z \to z' = z^n$$
The diffeomorphisms of $z$ which cover the projective transformations of $z'$ form a
group $PSU(1; 1)^{(n)}$, which is a realization of the n-fold covering
\begin{equation} 0 \to \mathbb Z_n \to PSU(1,1)^{(n)} \to PSU(1,1) \to 0 \end{equation}

In \cite{Ghys} it is conjectured that every finite dimensional
closed subgroup of $Homeo(S^1)$ is contained in a conjugate of one of the subgroups $PSU(1,1)^{(n)}$.

\subsection{Triangular factorization}\label{trifact} To better understand $PSU(1,1)^{(n)}$, and for
other purposes, we recall the analogue of triangular factorization for homeomorphisms
of $S^1$, often referred to as conformal welding. Just as an invertible matrix
may not have an LDU factorization, a general homeomorphism may not have a triangular
factorization; unlike the matrix case, the existence of a triangular factorization
does not imply that the factorization is unique. However for homeomorphisms
which are quasisymmetric (a relatively mild regularity condition, with multiple
characterizations - see Appendix B), the situation is completely straightforward.

\begin{theorem}\label{QS} Suppose that $\sigma$ is a quasisymmetric homeomorphism of $S^1$. Then
$$\sigma = l \circ ma \circ u$$
where
$$u = z(1 +\sum_{n\ge 1} u_nz^n)$$
is a univalent holomorphic function in the unit disk $\Delta$, with quasiconformal extension
to $\mathbb C$, $m \in S^1$ is rotation, $0 < a \le 1$ is a dilation, the mapping inverse to
$l$,
$$L(z) = z(1 +\sum_{n\ge 1} b_nz^{-n})$$
is a univalent holomorphic function on the unit disk about infinity $\Delta^*$, with quasiconformal
extension to $\mathbb C$, and the compatibility condition
$$mau(S^1) = L(S^1)$$
holds. This factorization is unique.\end{theorem}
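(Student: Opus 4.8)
The plan is to recognize this ``triangular factorization'' as the classical conformal welding of $\sigma$ and to derive it from the measurable Riemann mapping theorem together with the Möbius normalization freedom of a welding. First I would produce a welding. Since $\sigma$ is quasisymmetric it admits a quasiconformal extension to $\Delta$ (Beurling--Ahlfors, or Douady--Earle); reflecting that extension in $S^1$ one gets a quasiconformal self-homeomorphism $F$ of the exterior disk $\Delta^*$ with $F|_{S^1}=\sigma$. Let $\mu$ be the Beltrami coefficient of $F$ on $\Delta^*$ and extend it by $0$ to $\Delta$, so $\Vert\mu\Vert_\infty<1$ on $\widehat{\mathbb C}$. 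By the measurable Riemann mapping theorem there is a quasiconformal homeomorphism $w$ of $\widehat{\mathbb C}$ with Beltrami coefficient $\mu$, unique up to post-composition by a Möbius transformation. On $\Delta$ we have $\mu\equiv 0$, so $\phi_+:=w|_{\Delta}$ is conformal onto $\Omega_+:=w(\Delta)$; on $\Delta^*$ the maps $w$ and $F$ have the same Beltrami coefficient, so $\phi_-:=w|_{\Delta^*}\circ F^{-1}$ is conformal onto $\Omega_-:=w(\Delta^*)$. The common boundary $\Gamma:=w(S^1)=\partial\Omega_+=\partial\Omega_-$ is a quasicircle, and since $\Omega_\pm$ are quasidisks the maps $\phi_\pm$ extend continuously to $S^1$; comparing the two boundary values of $w$ there gives $\phi_+=\phi_-\circ\sigma$, i.e. $\sigma=\phi_-^{-1}\circ\phi_+$ on $S^1$.

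Next I would normalize. The pair $(\phi_+,\phi_-)$ is determined up to replacing both $\phi_\pm$ by $M\circ\phi_\pm$ for a single Möbius transformation $M$ (uniqueness of the welding, which reduces to the conformal removability of quasicircles -- equivalently, to the fact that a conformal map of a quasidisk extends quasiconformally to $\widehat{\mathbb C}$). Using this three-complex-parameter freedom I would arrange $\phi_-(\infty)=\infty$, then $\phi_-(z)/z\to 1$ as $z\to\infty$, and finally $\phi_+(0)=0$; this exhausts the freedom. Put $L:=\phi_-$, which now has the stated form $L(z)=z+\sum_{n\ge 0}b_nz^{-n}$, and $l:=L^{-1}$; put $c:=\phi_+'(0)\ne 0$, $a:=|c|$, $m:=c/a\in S^1$, and $u:=c^{-1}\phi_+$, which has the form $u(z)=z(1+\sum_{n\ge 1}u_nz^n)$. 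Then on $S^1$ we get $\sigma=\phi_-^{-1}\circ(ma\,u)=l\circ ma\circ u$, the compatibility $mau(S^1)=\Gamma=L(S^1)$ holds, and $u,L$ inherit quasiconformal extensions to $\mathbb C$ from the quasidisk property of $\Omega_\pm$ (composing away the affine factor $ma$).

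For the inequality $0<a\le 1$ I would invoke the area theorem. Since $\phi_-(\infty)=\infty$ the domain $\Omega_+$ is bounded, and $\mathbb C\setminus L(\Delta^*)=\overline{\Omega_+}$ has area $\pi(1-\sum_{n\ge 1}n|b_n|^2)\le\pi$; on the other hand, writing $\phi_+(z)=\sum_{n\ge 1}c_nz^n$, the area of $\Omega_+$ equals $\pi\sum_{n\ge 1}n|c_n|^2\ge\pi|c_1|^2=\pi a^2$. Hence $a\le 1$, with equality exactly when all higher coefficients vanish, i.e. when $\Gamma$ is a round circle and $\sigma\in PSU(1,1)$. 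Uniqueness is then bookkeeping: a second factorization $\sigma=\tilde l\circ\tilde m\tilde a\circ\tilde u$ with the same normalizations yields a second welding, hence differs from the first by a Möbius $M$; the normalization $\phi_-(z)\sim z$ at $\infty$ forces $M(z)=z+\beta$, and $\phi_+(0)=0$ on both sides forces $\beta=0$, so $L$ and $\phi_+$ agree; comparing derivatives at $0$ gives $ma=\tilde m\tilde a$, whence $a=\tilde a$ and $m=\tilde m$ by polar decomposition, and then $u=\tilde u$.

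The main obstacle is concentrated in the first two steps and is not really internal to this lemma: existence of the welding is the Beurling--Ahlfors extension theorem together with the Ahlfors--Bers measurable Riemann mapping theorem, and the uniqueness-up-to-Möbius that powers the normalization rests on the conformal removability of quasicircles (equivalently, on quasiconformal extendability of conformal maps on quasidisks). Granting those classical inputs, the normalization, the area-theorem estimate for $a\le 1$, and the uniqueness argument are all routine; it is the quasisymmetry hypothesis that makes all of this available, which is why the lemma is false for general homeomorphisms.
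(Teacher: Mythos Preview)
Your argument is correct and is essentially the classical derivation of conformal welding for quasisymmetric homeomorphisms: extend $\sigma$ quasiconformally, solve the Beltrami equation, normalize by the three-parameter M\"obius freedom, and invoke removability of quasicircles for uniqueness. The area-theorem computation for $0<a\le 1$ is also fine.

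The comparison with the paper is simple: the paper does \emph{not} prove Theorem~\ref{QS}. It is stated there as background (the triangular factorization/conformal welding), followed only by a pointer to \cite{Bishop} for examples of homeomorphisms which are not weldings or for which welding is non-unique. So you have supplied a full proof where the paper merely quotes the result. Your approach is the standard one and there is nothing to contrast on the paper's side.

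One small presentational point: in the normalization step you fix $\phi_-(\infty)=\infty$, $\phi_-'(\infty)=1$, and $\phi_+(0)=0$, and then read off $m,a,u,L$. It may be worth noting explicitly that these three real/complex conditions exactly exhaust the M\"obius freedom (three complex parameters modulo the reality constraint), since that is what pins down uniqueness at the end; you do say this, but the count could be made crisper. Otherwise the write-up is solid.
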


For the state of the art, and especially for examples of homeomorphisms which
are not weldings, and for weldings which are not unique, see \cite{Bishop} and references in that paper.

\begin{remark}\label{inversefact}  If $\sigma$ has triangular factorization $lmau$, then the triangular factorization
of $\sigma^{-1}$ is given by
$$u(\sigma^{-1})(z)=\frac{1}{L(\frac{1}{z^*})^*}; \quad l(\sigma^{-1})(z)=\frac{1}
{U(\frac{1}{z^*})^*}; \quad m(\sigma^{-1})=m(\sigma)^*; \quad a(\sigma^{-1})=a(\sigma)$$
where $L$ and $U$ are inverse to $l$ and $u$, respectively.
\end{remark}

Suppose that $\phi\in PSU(1,1)^{(n)}$, and suppose that $\phi$ covers $\pm\left(\begin{matrix}\alpha&\beta\\ \bar{\beta}&\bar{\alpha}\end{matrix}\right)
\in PSU(1,1)$. Corresponding to the matrix triangular factorization
$$\left(\begin{matrix}\alpha&\beta\\ \bar{\beta}&\bar{\alpha}\end{matrix}\right)=
\left(\begin{matrix}1&0\\ \bar{\beta}\alpha^{-1}&1\end{matrix}\right)
\left(\begin{matrix}\alpha&0\\0 &\alpha^{-1}\end{matrix}\right)
\left(\begin{matrix}1&\alpha^{-1}\beta\\0&1\end{matrix}\right)$$
and setting $w_n =\alpha^{-1}\beta$, there is a heuristic factorization in the `complexification of $\widehat{Diff(S^1)}$'
$$\phi= exp(\frac{-\bar{w}_n}{n}L_{-n})\alpha^{\frac{2}{n}L_0-\frac{1}{12n}(n^2-1)\kappa}exp(\frac{w_n}{n}L_n)$$
To make rigorous sense of this, one approach is to use formal completions, as in \cite{[8]}, but we will avoid this.
At the level of diffeomorphisms, this can be understood rigorously as a triangular factorization, as in Theorem \ref{QS},
$$\phi=l(\phi)\circ ma(\phi)\circ u(\phi)$$
where
\begin{equation}\label{rsgformulas}u(\phi)(z)=\frac{z}{(1 + w_nz^n)^{1/n}}, \quad ma(\phi) = \alpha^{-2/n}; \quad a(\phi) = (1-\vert w_n\vert^2)^{1/n}\end{equation} (where the root $\alpha^{1/n}$ is unambiguous because we are considering the n-fold covering of
$PSU(1,1)$), and
$$L(\phi)(z)=z(1-\bar{w}_nz^{-n})^{1/n}$$
The composition is given explicitly by
\begin{equation}\label{firstfact}\phi=\left(\frac{\bar{\alpha}}{\alpha}\right)^{1/n}\phi_n(w_n;z)\end{equation}
where again the $n$th root is unambiguous because we are considering the n-fold covering of
$PSU(1,1)$. The expression (\ref{firstfact}) implies part (a) of the following lemma. Part (b) is a
straightforward calculation.

\begin{lemma}\label{factor3} (a) Each element in $PSU(1,1)^{(n)}$ can be written as
$$Rot(\theta) \circ \phi_n(w_n;z)$$
for a uniquely determined rotation and $w_n\in\Delta$.

(b)
$$\phi_n(w_n) \circ \phi_n(w_n')=e^{\frac{2i}{n}(1+w_n \bar{w}_n')}\phi_n(\phi_1(\bar{w}_n';w_n);z)$$
Thus all of the subgroups $PSU(1,1)^{(n)}$ have the rotation subgroup in common,
and the transformations $\phi_n$ (parameterized by a disk) give a natural cross section
for the projection from $PSU(1,1)^{(n)}$ to the quotient modulo rotations.
\end{lemma}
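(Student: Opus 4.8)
The plan is to deduce both parts from the explicit factorization (\ref{firstfact}), in (b) combined with the fact that $i_n\colon PSU(1,1)^{(n)}\to Diff(S^1)$ is a group homomorphism, so that the computation reduces to $2\times2$ matrix arithmetic; the only delicate point will be keeping the several $n$-th roots consistent, which is precisely what passage to the $n$-fold cover arranges.

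For (a), existence is just a rereading of (\ref{firstfact}): an element $\phi\in PSU(1,1)^{(n)}$ covering $\pm\left(\begin{smallmatrix}\alpha&\beta\\\bar\beta&\bar\alpha\end{smallmatrix}\right)$ equals $(\bar\alpha/\alpha)^{1/n}\phi_n(w_n)$ with $w_n=\alpha^{-1}\beta$, where $|w_n|<1$ since $|\alpha|^2-|\beta|^2=1$, and $(\bar\alpha/\alpha)^{1/n}$ is the root singled out by the cover. For uniqueness I would work at the level of diffeomorphisms: if $Rot(\theta)\circ\phi_n(w_n)=Rot(\theta')\circ\phi_n(w_n')$ on $S^1$, raise both sides to the $n$-th power and use $\phi_n(w;z)^n=(z^n+\bar w)/(1+wz^n)$; with $\zeta=z^n$ this becomes an equality of the M\"obius maps $\zeta\mapsto e^{in\theta}(\zeta+\bar w_n)/(1+w_n\zeta)$ and $\zeta\mapsto e^{in\theta'}(\zeta+\bar w_n')/(1+w_n'\zeta)$ on all of $S^1$, hence everywhere; comparing zeros (and poles) forces $w_n=w_n'$, and then cancelling the nonvanishing factor $\phi_n(w_n;z)$ in the original identity forces $e^{i\theta}=e^{i\theta'}$. (Alternatively one can invoke uniqueness of triangular factorization, Theorem \ref{QS}, with the recorded formula $u(\phi_n(w_n))(z)=z(1+w_nz^n)^{-1/n}$, whose $z^{n+1}$-coefficient is $-w_n/n$.) The final two sentences of the lemma are then immediate restatements of (a).

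For (b), I would use that $\phi_n(w)$ is the element of $PSU(1,1)^{(n)}$ covering $A(w):=(1-|w|^2)^{-1/2}\left(\begin{smallmatrix}1&w\\\bar w&1\end{smallmatrix}\right)$, which is (\ref{firstfact}) specialized to $\alpha>0$. Since composition of diffeomorphisms corresponds, under the covering homomorphism $PSU(1,1)^{(n)}\to PSU(1,1)$, to matrix multiplication, $\phi_n(w_n)\circ\phi_n(w_n')$ covers
\[
A(w_n)A(w_n')=\bigl((1-|w_n|^2)(1-|w_n'|^2)\bigr)^{-1/2}\begin{pmatrix}1+w_n\bar w_n'&w_n+w_n'\\\overline{w_n+w_n'}&\overline{1+w_n\bar w_n'}\end{pmatrix},
\]
which is again of the form $\left(\begin{smallmatrix}\tilde\alpha&\tilde\beta\\\bar{\tilde\beta}&\bar{\tilde\alpha}\end{smallmatrix}\right)$ with $\tilde w_n:=\tilde\alpha^{-1}\tilde\beta=(w_n+w_n')/(1+w_n\bar w_n')$. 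Feeding this back through (\ref{firstfact}) gives $\phi_n(w_n)\circ\phi_n(w_n')=(\bar{\tilde\alpha}/\tilde\alpha)^{1/n}\,\phi_n(\tilde w_n)$, and it only remains to identify the two factors: from the definition of $\phi_1$ one checks $\phi_1(\bar w_n';w_n)=(w_n+w_n')/(1+\bar w_n'w_n)=\tilde w_n$; and since $\mathrm{Re}(1+w_n\bar w_n')\ge1-|w_n||w_n'|>0$, writing $1+w_n\bar w_n'=\rho\,e^{i\Theta(1+w_n\bar w_n')}$ with $\Theta\in(-\tfrac{\pi}{2},\tfrac{\pi}{2})$ gives $\bar{\tilde\alpha}/\tilde\alpha=e^{-2i\Theta(1+w_n\bar w_n')}$, so the rotation prefactor is the unambiguous $n$-th root $e^{-\frac{2i}{n}\Theta(1+w_n\bar w_n')}$ recorded in (b). Checking the degenerate cases $w_n=0$ and $w_n'=0$ gives a quick sanity test.

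A fully self-contained alternative to the matrix step substitutes $\zeta=\phi_n(w_n';z)$ directly into the definition of $\phi_n(w_n)$: the prefactor $\zeta$ cancels against the $\zeta^{-1}$ produced by $(1+\bar w_n\zeta^{-n})^{1/n}=(\zeta^n+\bar w_n)^{1/n}/\zeta$, reducing $\phi_n(w_n;\zeta)$ to $(\zeta^n+\bar w_n)^{1/n}/(1+w_n\zeta^n)^{1/n}$; substituting $\zeta^n=(z^n+\bar w_n')/(1+w_n'z^n)$, clearing the common factor $(1+w_n'z^n)^{1/n}$, and then pulling $(1+\bar w_nw_n')^{1/n}$ out of the numerator and $(1+w_n\bar w_n')^{1/n}$ out of the denominator leaves exactly $\bigl(\overline{1+w_n\bar w_n'}/(1+w_n\bar w_n')\bigr)^{1/n}\phi_n(\tilde w_n;z)$, in agreement with the matrix computation. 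In either route the one thing to watch is the bookkeeping of the $n$-th roots appearing in $\phi_n$, in (\ref{firstfact}), and in the composition; this is not an analytic difficulty but exactly the statement that the computation lives inside the $n$-fold cover $PSU(1,1)^{(n)}$, where each such root is canonically determined, so (b) is to be read as an identity of honest diffeomorphisms of $S^1$ with that distinguished rotation prefactor.
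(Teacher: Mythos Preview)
Your approach matches the paper's, which simply asserts that (a) follows from (\ref{firstfact}) and that (b) is a ``straightforward calculation''; you have supplied exactly the details the paper omits, via the same route (matrix multiplication under the covering homomorphism, then reapplying (\ref{firstfact})). Your computation of the rotation prefactor as $e^{-\frac{2i}{n}\Theta(1+w_n\bar w_n')}$ with $\Theta$ the polar angle is correct and in fact clarifies what appears to be a typographical slip in the displayed formula of part (b).
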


\begin{proposition}\label{composition1}  Suppose that the triangular factorization of $\phi$ is known:
$$\phi = l(\phi)ma(\phi)u(\phi)$$
Then for $\phi_1=\phi_1(w_1)$
$$L(\phi\circ \phi_1)(z) = L(\phi)(z)-(ma)(\phi)u(\phi)(\bar{w}_1), \quad \vert z\vert > 1$$
$$
(ma)(\phi\circ \phi_1) = (ma)(\phi)a(\phi_1)u(\phi)'( \bar{w}_1)$$
where $a(\phi_1)=(1-w_1\bar w_1)$ (by (\ref{rsgformulas})), and
$$u(\phi\circ \phi_1)(z) =\frac{1}{u(\phi)'(\bar{w}_1)(1-w_1\bar{w}_1)}
(u(\phi)(\phi_1(z))- u(\phi)(\bar{w}_1)), \quad \vert z\vert < 1$$
\end{proposition}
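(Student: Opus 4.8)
The plan is to exploit the fact that precomposition by $\phi_1$ disturbs only the \emph{interior} part of the triangular factorization, and in a way that is undone by an affine change of variable, so that the factorization of $\phi\circ\phi_1$ can be read off from that of $\phi$ together with the uniqueness in Theorem \ref{QS}. Since $\phi$ has a triangular factorization it is quasisymmetric, and $\phi_1\in PSU(1,1)$ is quasisymmetric, so $\phi\circ\phi_1$ is quasisymmetric; by Theorem \ref{QS} it has a unique triangular factorization, so it is enough to exhibit one and to match it against the three displayed formulas. The structural input is that, on $S^1$ (where $z^{-1}=\bar z$), $\phi_1$ coincides with the disk automorphism $z\mapsto(z+\bar w_1)/(1+w_1 z)$, which is holomorphic across $\overline\Delta$ and maps $\Delta$ onto $\Delta$. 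Hence
\[
\phi\circ\phi_1 \;=\; l(\phi)\circ\bigl((ma)(\phi)\circ u(\phi)\circ\phi_1\bigr),
\]
and the inner map $h:=(ma)(\phi)\circ u(\phi)\circ\phi_1$ is univalent and holomorphic on $\Delta$, with quasiconformal extension to $\mathbb C$ (the three factors all have such extensions), differing from the normalized form $z(1+\sum u_n z^n)$ only in its value and derivative at $0$.

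First I would peel off this discrepancy by writing $h=A\circ u'$, where $A$ is the affine map $A(z)=\alpha z+\beta$ with $\beta:=h(0)$ and $\alpha:=h'(0)$, so that $u':=A^{-1}\circ h$ is univalent and holomorphic on $\Delta$ with $u'(0)=0$, $(u')'(0)=1$, and a quasiconformal extension. A direct computation gives $\phi_1(0)=\bar w_1$ and $\phi_1'(0)=1-|w_1|^2$, whence
\[
\beta=(ma)(\phi)\,u(\phi)(\bar w_1),\qquad
\alpha=(ma)(\phi)\,(1-|w_1|^2)\,u(\phi)'(\bar w_1)=(ma)(\phi)\,a(\phi_1)\,u(\phi)'(\bar w_1),
\]
using $a(\phi_1)=1-|w_1|^2$ (the $n=1$ case of the explicit factorization of $\phi_n$ recorded in Subsection \ref{trifact}), and plugging $\phi_1$ into $u'=A^{-1}\circ h$ reproduces exactly the claimed formula for $u(\phi\circ\phi_1):=u'$. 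Next, writing $m_c$ for multiplication by $c$,
\[
\phi\circ\phi_1 \;=\; l(\phi)\circ A\circ u' \;=\; \bigl(l(\phi)\circ A\circ m_{1/\alpha}\bigr)\circ m_\alpha\circ u',
\]
and $l(\phi)\circ A\circ m_{1/\alpha}=l(\phi)(\,\cdot\,+\beta)$; so I would set $(ma)(\phi\circ\phi_1):=\alpha$ and $l(\phi\circ\phi_1):=l(\phi)(\,\cdot\,+\beta)$, whose inverse is $L(\phi)(z)-\beta$, a univalent map of $\Delta^*$ of the form $z+\sum_{n\ge0}b_n z^{-n}$ with quasiconformal extension. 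This yields the remaining two displayed formulas.

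It then remains to check the compatibility condition of Theorem \ref{QS} and the normalization $0<a(\phi\circ\phi_1)\le1$. The compatibility condition reduces, after the cancellations above and using $\phi_1(S^1)=S^1$, to the identity $(ma)(\phi)u(\phi)(S^1)-\beta=L(\phi)(S^1)-\beta$, which is just the compatibility condition for $\phi$, so it holds. The only point that requires genuine care — and the place I expect the one bit of friction — is the bound $|\alpha|\le1$: either one invokes the standard fact that the scalar appearing in a welding normalized as in Theorem \ref{QS} automatically has modulus at most one, or one observes that we have in any case produced a decomposition $l'\circ m_\alpha\circ u'$ with $u'$ and $L'=(l')^{-1}$ normalized and quasiconformally extendible and with the compatibility relation holding, and then re-runs the usual welding-uniqueness argument (a Morera/removability argument across the quasicircle $L'(S^1)$, the residual affine ambiguity being killed by the normalizations at $0$ and at $\infty$) to identify this decomposition with the one supplied by Theorem \ref{QS}; in particular $a(\phi\circ\phi_1)\le1$ a posteriori, and all three formulas are established. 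Apart from this, the argument is essentially mechanical once one sees that $\phi_1$ is a Möbius transformation of the disk.
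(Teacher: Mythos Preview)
Your argument is correct and essentially coincides with the paper's: both amount to verifying that the displayed formulas satisfy the defining relation $L(\phi\circ\phi_1)\circ(\phi\circ\phi_1)=(ma)(\phi\circ\phi_1)\,u(\phi\circ\phi_1)$ on $S^1$, together with the normalizations of $u$ and $L$, and then invoking the uniqueness in Theorem~\ref{QS}. The paper simply plugs the formulas in and reduces the identity to $L(\phi)\circ\phi=(ma)(\phi)\,u(\phi)$; you instead \emph{derive} the formulas by observing that $u(\phi)\circ\phi_1$ is a disk-univalent map and renormalizing by the affine $A(z)=\alpha z+\beta$, which is the same computation read in the other direction. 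Your extra care about $0<a(\phi\circ\phi_1)\le 1$ is not needed as a separate step: once you have exhibited a decomposition with $u'$ and $L'$ correctly normalized and the compatibility holding, Theorem~\ref{QS} forces it to be \emph{the} triangular factorization, whence the bound on $a$ follows a posteriori.
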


\begin{remark} (a) This shows that if we consider a composition $\sigma_2 = \phi_2 \circ\phi_1$,
it is not the case that the diagonal term factors, e.g. in general, $a(\sigma_2)\ne a(\phi_2)a(\phi_1)$.
This is disappointing, because the analogue of this in the context of loop groups is
true.

(b) There are also formulas for the triangular factorization of a composition of the form
$\phi_1\circ \phi$, because of Remark \ref{inversefact}.

\end{remark}

\begin{proof} First observe that our formulas for $u$ and $L$ do define univalent functions with
the proper normalizations (for example we have simply subtracted a constant from $L(\phi)$, so that
it is still univalent in $\Delta^*$, and has the correct kind of Laurent expansion). Thus it suffices
to check that for our formulas, for $\vert z\vert = 1$
$$L(\phi\circ \phi_1)\circ\phi\circ \phi_1(z) = (ma)(\phi\circ \phi_1)u(\phi\circ \phi_1)(z)$$
For our claimed formulas the left hand side equals
\begin{equation}\label{LHSformula}L(\phi) (\phi(\phi_1(z)))-(ma)(\phi)u(\phi)(\bar{w}_1)\end{equation}
and the right hand side equals
$$(ma)(\phi)(1-w_1\bar w_1)u(\phi)'( \bar{w}_1)\frac{1}{
u(\phi)'(\bar{w}_1)(1-w_1\bar{w}_1)}(u(\phi)(\phi_1(z))-u(\phi)(\bar{w}_1))$$
When we substitute
$$L(\phi) \circ \phi = (ma)(\phi)u(\phi)$$
into (\ref{LHSformula}), we see the left and right sides agree.
This completes the proof.
\end{proof}

In the introduction we mentioned fingerprints of polynomial lemniscates, i.e. algebraic homeomorphisms
of the form $B^{1/n}$, where $B$ is a Blaschke product with $n$ factors. These are best understood in terms of
their triangular factorizations, see \cite{EKS}.

\section{Algebraic Homeomorphisms, I} \label{alghomeo}

In this section we consider the Riemann surface associated to an algebraic diffeomorphism, and we discuss
a number of examples. This will be used in the proof of Theorem \ref{theorem1} in Section \ref{finitetype}.

\subsection{The Surface Associated to an Algebraic Diffeomorphism}

Suppose that $\Sigma$ is a connected compact Riemann surface with nonempty boundary
S (a disjoint union of circles). Let $\hat{\Sigma}$ denote the double, i.e.
$$\hat{\Sigma}=\Sigma^*\circ \Sigma$$
where $\Sigma^*$ is the adjoint of $\Sigma$, the surface $\Sigma$ with the orientation reversed, and the
composition is sewing along the common boundary $S$. Let $R$ denote the anti-holomorphic
involution (or reflection) fixing $S$. The basic example is the realization of
the Riemann sphere as the double of the closed unit disk $D$, where $R(z) =\frac{1}{z^*}$.

\begin{definition} (a) A Riemann surface with reflection symmetry (or a surface with
a real structure) is a connected compact Riemann surface $\hat{\Sigma}$ which is a double
$$\hat{\Sigma}=\Sigma^*\circ \Sigma$$

(b) A holomorphic map $f:\Sigma^*\circ \Sigma \to D^*\circ D$ is equivariant if it satisfies
$$ f(R(q)) =\frac{1}{f(q)^*}$$
and strictly equivariant if it additionally satisfies $f^{-1}(D) = \Sigma$.
\end{definition}

Suppose that $\phi$ is an analytic diffeomorphism of $S^1$. Analyticity implies that there exists
a reflection invariant domain $\Omega$ containing $S^1$ and an analytic continuation $\phi:\Omega
 \to \phi(\Omega)$ which is a conformal isomorphism. For $q\in \Omega$, this continuation will satisfy the
equivariance condition in (b) of the Definition, and the continuation is strictly equivariant in the
limited sense that $\Omega \cap\Delta$ will be mapped into $\Delta$. In general there does not exist
a maximal domain $\Omega$.

Suppose that $\phi$ is an algebraic diffeomorphism. In this event $\phi$ has an analytic continuation
to a multi-valued function on a reflection invariant domain
$\mathbb P^1 \setminus \{z_j\in\Delta, 1/z_j^*\in\Delta^*: 1 \le j \le n\}$ such that the singularities are algebraic (and off the circle), see e.g. \cite{Alhfors}, Theorem 4 of chapter 8. At one extreme, if $\phi$ is rational, then there are no branch points.
At another extreme, if $\phi=\phi_n(w_n)$, $w_n\ne 0$, then the number of branch points in $\Delta$ is $n$.

\begin{proposition}\label{prop1} Suppose that $w=\phi(z)$ is an algebraic diffeomorphism. Then there exist

(1) a compact connected Riemann surface with reflection symmetry $\hat{\Sigma} = \Sigma^*\circ\Sigma$;

(2) equivariant holomorphic maps $Z,W: \hat{\Sigma}\to D^*\circ D$;

(3) an irreducible polynomial $p$ (of two variables over $\mathbb C$) such that $p(Z,W)=0$;
and

(4) a distinguished component of $S$, denoted $S_1$, such that $Z,W : S_1 \to S^1$ are homeomorphisms,
$\phi = W \circ (Z\vert_{S_1} )^{-1}$, and hence $p(z,\phi(z))=0$ for $z\in S^1$.

\end{proposition}

\begin{proof} The Riemann surface defined by $\phi$ is the quotient of the
universal covering of the punctured sphere $\mathbb P^1 \setminus \{z_j, 1/z_j^*: 1 \le j \le n\}$ by the
group of automorphisms which fixes a single-valued lift of $\phi$, where the $z_j$ are the branch points
for $\phi$ in $\Delta$. There are other ways to describe this surface, such as by using germs of branches
for analytic continuations of $\phi$, see e.g. chapter 8 of \cite{Alhfors}.
The punctured sphere is stable with respect to reflection, so this reflection symmetry lifts to the universal covering.
Since $\phi$ is also reflection symmetric,
this descends to a reflection symmetry for the Riemann surface defined by $\phi$. Let $\tilde{Z}$ denote the projection
from this (incomplete) Riemann surface to the punctured sphere, and let $\tilde{W}$ denote a single-valued lift of $\phi$ to the surface. $\tilde{Z}$ is strictly equivariant, but $\tilde{W}$ is in general only equivariant (more concretely, as a multivalued function, $\phi$ does not in general map the disk to the disk). These functions satisfy a polynomial equation $p(\tilde{Z},\tilde{W})=0$, which we can suppose is irreducible. It is
well-known that this implies that the surface defined by $\phi$ can be
extended to a compact Riemann surface $\hat{\Sigma}$ in a unique way so that $\tilde{Z}$ and $\tilde{W}$
extend to holomorphic maps $Z$ and $W$ (This is essentially the Riemann extension theorem, see Theorem 2 of \cite{Donaldson}).

\end{proof}

\begin{proposition}\label{prop2} Suppose that $\phi$ is of finite type. Then (in reference to the preceding Proposition \ref{prop1})

(a) $Z$ and $W$ are strictly equivariant;

(b) $S$ is the inverse image of $S^1$ for both $Z$ and $W$; and

(c) $Z$ and $W$ are homeomorphisms restricted to each connected component of $S$.

\end{proposition}

\begin{proof} $\phi_n(w_n)$ is the $n$th root of a $PSU(1,1)$ linear fractional transformation. Consequently as a multi-valued function
it stabilizes the decomposition $\mathbb P^1=\Delta\sqcup S^1 \sqcup \Delta^*$. The same is true for a composition of the $\phi_n$, hence for any $\phi$ of finite type. Parts (a) and (b) follow from this.

For part (c): This is true on the distinguished component $S_1$ by (4) of the preceding Proposition for any $\phi$.
$Z$ and $W$ locally invert one another (with respect to composition), so when they are continued to other
components of $S$, they remain inverses. This implies (c).

\end{proof}

\subsubsection{Galois Groups}

Suppose that $\sigma\in AlgHomeo(S^1)$ and $w=\sigma(z)$ satisfies the irreducible polynomial equation $p(z,w)=0$. Then
$w=\sigma(z)$ and its inverse satisfy polynomial equations of the form
$$w^{n}+a_{n-1}(z)w^{n-1}+...+a_0(z)=0$$
$$z^{m}+b_{m-1}(w)z^{m-1}+...+b_0(w)=0$$
where the coefficients are rational functions. We obtain two Galois groups,
one associated to $\sigma$, one associated to $\sigma^{-1}$.

\subsection{Examples}

\subsubsection{Rational Homeomorphisms}

For a rational homeomorphism $w=R(z)$ of the circle, the associated Riemann surface is the sphere, $Z=z$,
and $W=R$. The degree of $R$ is odd, because the components of $R^{-1}(S^1)$ on each side of $S$ are reflections of one another.
If $R \notin PSU(1,1) $, then the degree of $R$ is at least three. There will be at least on component of $R^{-1}(S^1)$
in $\Delta$. One side of such a component will map to $\Delta$ and the other will map to $\Delta^*$. Thus $R$ will not preserve
the decomposition $\mathbb P^1=\Delta\sqcup S^1 \sqcup \Delta^*$.

The Galois group associated to $R$ is trivial. There are not any apparent restrictions on the Galois group for the inverse.

\subsubsection{$\phi_n(w_n)$}\label{basiccase}

Suppose $w_n\ne 0$. Then $w=\phi_n(z) $ satisfies
$$ w^n(1 + w_nz^n)-(z^n + \bar{w}_n)=0$$
The affine curve defined by this equation is smooth (the partial derivatives do not
simultaneously vanish). However, consider the homogeneous equation
$$ Z^n_
0 Z^n_2 + w_nZ^n_1 Z^n_2-(Z^n_0 Z^n_1 + \bar{w}_nZ^{2n}_0 ) = 0$$
(where $z = Z_1/Z_0$ and $w = Z_2/Z_0$), and the corresponding subvariety in projective
space. If $u = Z_0/Z_1$ and $ v = Z_2/Z_1$, then
$$u^nv^n + w^nv^n -(u^n + \bar{w}_nu^{2n}) = 0$$
The partial derivatives of the left hand side are
$$\frac{\partial}{\partial u}(LHS)= nu^{n-1}v^n- (nu^{n-1} + \bar{w}_n2nu^{2n-1})$$
and
$$\frac{\partial}{\partial v}(LHS)= nu^nv^{n-1} + w_nnv^{n-1}$$
Assuming that $n>1$, these partials vanish simultaneously at $u=v=0$, and this
is a point on the curve. Thus the projective variety defined by the homogeneous
equation is not smooth.

\begin{proposition} Suppose that $ 0 < \vert w_n \vert < 1$.

(a) the compact Riemann surface $\hat{\Sigma}$ associated to $\phi_n(w_n)$ has genus $(n-1)^2$.

(b)The anti-holomorphic involution $R$ for this surface,
$$ R(z,w) = (1/z^*,1/w^*)$$
has a fixed point set S which consists of n circles; there are $(n-1)(n-2)/
2$ holes in the
surface on each side of the fixed point set (this is the genus of $\Sigma$).\end{proposition}

\begin{remark} (a) This shows the projective variety associated to $\phi_n$ is not smoothly embedded in $\mathbb P^2$, for otherwise, using the genus formula for a projective curve (see page 219 of \cite{GH}), the genus would be $\frac12 (2n-1)(2n-2)$, where $2n$ is the degree
of the homogeneous polynomial.

(b) This should be compared with the Legendre normal form in the theory of
Jacobi elliptic functions
$$y^2=(1-x^2)(1-k^2x^2)$$
The affine curve is smooth (for $k\ne 0$), but the corresponding projective variety is
not smooth, for otherwise the genus would be $\frac12 (4-1)(4-2) = 3$, and we know the genus is $1$.
\end{remark}

\begin{proof} Consider first the equation in z,w coordinates:
$$w_n(1 + w_nz^n) -(z^n + \bar{w}_n) = 0$$
The partial derivatives of the left hand side are
$$\frac{\partial}{\partial z}(LHS)=nw_nz^{n-1}w^n- z^{n-1}$$
and
$$\frac{\partial}{\partial w}(LHS)=nw^{n-1}(1+\bar{w}_nz^{n})$$
For points on the affine curve, these are never simultaneously zero, and hence the
affine curve is smooth.
So we need to know how to compactify this smooth affine algebraic curve. These
points are
$$z =\infty,\quad  w =(\frac{1}{w_n})^{1/n}$$
and these are smooth. To see this, change $z$ to $\frac{1}{\zeta}$. The curve is then
$$w^n =
\frac{1 + \bar{ w}_n\zeta^n}{\zeta^n + w_n}$$
and this is perfectly well-behaved near $\zeta = 0$. We could alternately have used symmetry to
understand the behavior near $z = \infty$, since it is the reflection of what happens at
$z = 0$.
Consider the holomorphic map
$$z : \hat{\Sigma} \to \mathbb C\cup\{\infty\}$$
Let $\Sigma$ denote the inverse image of D, the closed unit disk at $z = 0$. We can think
of the surface
$$\hat{\Sigma}=\Sigma^*\circ\Sigma$$
as the double of $\Sigma$, where the involution $R$ is given by (2.4).
For the map z, there are 2n branch points at the roots $(-\bar{ w}_n)^{1/n}$ and their
reflections through $S^1$. The ramification index is $n-1$ at each branch point. By
the Riemann-Hurwitz relation
$$\chi(\hat{\Sigma}) = n\chi(S^2)-2n(n-1) = 2(1-(n- 1)^2)$$
implying that $genus(\hat{\Sigma}) = (n-1)^2$, and the genus of $\Sigma$, the number of holes in $\Sigma$,
is $(n-1)(n-2)/2$, since
$$genus(\hat{\Sigma}) = 2genus(\Sigma) + n-1$$

This construction is highly discontinuous at $w_n = 0$. When $w_n = 0$, the curve
degenerates to $w^n = z^n$, the Riemann sphere.\end{proof}

If $w_n\neq 0$, then the Galois group is $\mathbb Z_n$.

\subsubsection{ $\phi_n\circ\phi_m$} Suppose that $n \ne m$, $w_n,w_m \ne 0$, and $(m, n) = d$. The
equation we obtain from $w = \phi_n \circ\phi_m(z)$ is
$$(z^m + \bar{ w}_m)^{n/d}(1-w_nw^n)^{m/d}-(w^n-\bar{w}_n)^{m/d}(1 + w_mz^m)^{n/d}=0$$
On the one hand this polynomial has degree $mn/d$ in each individual variable for all $w_n,w_m \ne 0$.
Thus the degree is unchanging. On the one hand the total degree of this polynomial is generically
$2mn/d$, but the total degree decreases when $(-w_n)^{m/d}=w_m^{n/d}$. This means that the topology of the surface
$\hat{\Sigma}_{\phi_n(w_n)\circ \phi(w_m)}$ can vary with the parameters.

The values of $z \in\Delta$ at which branching occurs are
$$z^m =-\bar{ w}_m \text{ and } \phi_m(z)^n =- \bar{w}_n$$
We want to calculate the ramification for $Z$ at these branch points. For the
value $z = (\bar{ w}_m)^{1/m}$, there are n inverse images, $(z,w = \bar{w}_n^{1/n})$. By symmetry, the
ramification index must be the same at each point, hence this index equals $m/d$
at each of these inverse images. Given $z$ such that $\phi_m(z) = (\bar{w}_n)^{1/n}$, there are
$m$ inverse images, and possibly again by symmetry the index is the same at all of
them. Hence the ramification index must be $n/d$ at each point. So in a generic
situation we expect the ramification index
$$R = 2[m \cdot n \cdot m/d + n \cdot m \cdot n/d]$$
The Riemann-Hurwitz formula now implies
$$genus = 1 -\frac{mn}{d}+ m \cdot n \cdot \frac{m}{d} + n \cdot m \cdot \frac{n}{d}$$
This does not appear to simplify.

It seems to be a difficult problem to find some constructive procedure for finding the equation defined by a general
diffeomorphism of finite type.

\section{Finite Type Diffeomorphisms and Factorization}\label{finitetype}

In this section we will prove Theorem \ref{theorem1}. Because of Lemma \ref{factor3}, part (a) can be
restated in the following way.

\begin{theorem} Suppose that n and m are relatively prime. Then the subgroup
generated by $PSU(1,1)^{(n)}$ and $PSU(1,1)^{(m)}$ is dense in $Diff(S^1)$.\end{theorem}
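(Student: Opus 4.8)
The plan is to show that the Lie subalgebra of $\mathrm{Vir}$ (or rather of $\mathrm{vect}(S^1)$) generated by $di_n(su(1,1))$ and $di_m(su(1,1))$ is dense in $\mathrm{vect}(S^1)$, and then invoke the standard fact that a subgroup of $\mathrm{Diff}(S^1)$ whose associated Lie algebra is dense is itself dense (this uses that $\mathrm{Diff}(S^1)$ is a Fr\'echet Lie group exponentiating the relevant one-parameter groups $i_n(PSU(1,1)^{(n)})$, so one reaches a dense set of generators and then takes closures). Concretely, $di_n$ supplies the vector fields $L_n$, $L_{-n}$, $L_0$ (from $e_n,f_n,h_n$), and $di_m$ supplies $L_m$, $L_{-m}$, $L_0$. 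So I must show that $\{L_0, L_{\pm n}, L_{\pm m}\}$ generates a dense subalgebra of $\mathrm{vect}(S^1)^{\mathbb C}$ when $(n,m)=1$.

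First I would record the bracket $[L_a, L_b] = (b-a)L_{a+b}$ (ignoring the cocycle, which is harmless for the density statement since it lives in the center and we are working in $\mathrm{vect}(S^1)$ directly; the remark in the text licenses this). Starting from $L_n$ and $L_m$, bracketing gives $L_{n+m}$ (coefficient $m-n \ne 0$ unless $n=m$, excluded). More usefully, from $L_n$ and $L_{-m}$ I get a nonzero multiple of $L_{n-m}$ provided $n \ne m$; iterating, the set of available indices is closed under the operations $a \mapsto a+n$, $a\mapsto a+m$ (and their negatives), as long as the structure constant $b-a$ doesn't vanish. Since $\gcd(n,m)=1$, the subsemigroup (and then subgroup, using negative copies) of $\mathbb Z$ generated by $n$ and $m$ is eventually all of $\mathbb Z$: by the Chicken McNugget/Sylvester argument every integer $k > nm$ is a nonnegative combination $k = an+bm$, and one can arrange the intermediate partial sums to avoid the diagonal so no structure constant vanishes (e.g. add $n$'s first to overshoot, then subtract $m$'s, checking each bracket $[L_{\text{partial}}, L_{\pm n \text{ or } \pm m}]$ has nonzero coefficient — a finite, elementary check). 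This shows $L_k$ lies in the generated algebra for all sufficiently large $|k|$; then bracketing $L_k$ with $L_{-k+1}$-type elements (themselves now available) fills in the remaining low-index $L_j$, so every $L_j$, $j \in \mathbb Z$, is in the algebra. Hence the generated Lie algebra contains the dense subalgebra of trigonometric polynomial vector fields $\sum \mathbb{C} L_j$, so its closure is all of $\mathrm{vect}(S^1)$.

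To pass from the dense Lie subalgebra back to density of the subgroup in $\mathrm{Diff}(S^1)$, I would argue as follows. Each $L_j$ (real and imaginary combinations giving genuine real vector fields $\cos(j\theta)\partial_\theta$, $\sin(j\theta)\partial_\theta$) is obtained from the generators by finitely many brackets; by standard commutator-of-flows identities ($e^{tX}e^{tY}e^{-tX}e^{-tY} = e^{t^2[X,Y]+O(t^3)}$, valid in $\mathrm{Diff}(S^1)$), the one-parameter group generated by each $L_j$ lies in the closure of the abstract subgroup $G$ generated by $i_n(PSU(1,1)^{(n)})$ and $i_m(PSU(1,1)^{(m)})$. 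Therefore $\overline{G}$ contains all flows of all trigonometric-polynomial vector fields; since finite compositions of such flows are dense in $\mathrm{Diff}(S^1)$ (this is a known fact — trigonometric vector fields are dense in $\mathrm{vect}(S^1)$ and $\mathrm{Diff}(S^1)$ is connected with exponential-type coordinates on a neighborhood of the identity, and then one slides the neighborhood around by composition), we conclude $\overline{G} = \mathrm{Diff}(S^1)$.

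The main obstacle I anticipate is the last step: the commutator-of-flows estimate $e^{tX}e^{tY}e^{-tX}e^{-tY} \to e^{t^2[X,Y]}$ is a Banach-space statement that one must justify in the Fr\'echet category of $\mathrm{Diff}(S^1)$. One cannot appeal to the inverse function theorem here (as the text emphasizes repeatedly, $\mathrm{Diff}(S^1)$ is Fr\'echet, not Banach). The honest route is to work at the level of $C^k$ diffeomorphisms for each fixed $k$ — there the relevant group is Banach (or at least the estimates are uniform), so the commutator estimate holds — and then note that density in every $C^k$ topology gives density in the Fr\'echet ($C^\infty$) topology. A cleaner alternative, which I would actually prefer to pursue, is to avoid Lie-theoretic closure altogether: use instead a known structural result that any closed subgroup of $\mathrm{Diff}(S^1)$ containing $PSU(1,1)$ (the M\"obius group, $= PSU(1,1)^{(1)}$) and not contained in a finite-dimensional subgroup must be all of $\mathrm{Diff}(S^1)$ — but since $n,m \ge 1$ need not include $1$, one first has to generate $PSU(1,1)^{(1)}$ or at least escape every $PSU(1,1)^{(k)}$, which is exactly what the $\gcd$ condition buys via the $L_{\pm 1}$ computation above. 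So the two halves of the argument are: (i) the elementary number-theoretic Lie-algebra generation, which is the easy and decisive part; and (ii) the analytic passage from Lie algebra to group, which is where care is genuinely needed and which I would handle by the $C^k$-approximation dance just described.
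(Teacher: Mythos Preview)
Your approach is essentially the same as the paper's: generate all $L_j$ from $L_{\pm n}, L_{\pm m}$ using the Witt bracket and $\gcd(n,m)=1$, then pass from the dense Lie subalgebra to the dense subgroup via Trotter/commutator-of-flows limits. The paper streamlines both halves slightly. For the Lie-algebra step, instead of tracking partial sums and worrying about individual structure constants, the paper observes that iterated adjoint action of $L_{\pm m}$ on $L_n$ produces all $L_{n+km}$ (if one of the two coefficients $\pm m - (n+km)$ vanishes, the other does not), and symmetrically all $L_{m+ln}$; then B\'ezout finishes. For the analytic step, rather than your $C^k$-approximation dance, the paper works directly with the $C^\infty$ closure $G$, defines $\mathfrak g = \{X : e^{tX}\in G \text{ for all }t\}$, and uses the Trotter limits
\[
e^{t(X+Y)}=\lim_n (e^{tX/n}e^{tY/n})^n,\qquad e^{t^2[X,Y]}=\lim_n (e^{tX/n}e^{tY/n}e^{-tX/n}e^{-tY/n})^{n^2}
\]
to show $\mathfrak g$ is a closed Lie subalgebra; since it contains all trigonometric vector fields it equals $\mathrm{vect}(S^1)$, hence every one-parameter subgroup lies in $G$, hence $G=\mathrm{Diff}(S^1)$. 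The paper explicitly attributes this argument to Pressley--Segal (Proposition~3.5.3) and ultimately to Cartan, so your Fr\'echet worry is legitimate but already absorbed into that citation. Your alternative route via structural results on closed subgroups containing $PSU(1,1)$ is unnecessary here.
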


The proof of this follows by a straightforward modification of the proof of Proposition
3.5.3 of \cite{PS} (which in turn relies on an argument that goes back to Cartan, used in his proof that
a closed subgroup of a finite dimensional Lie group is a Lie subgroup).

\begin{proof} Let $G$ denote the $C^{\infty}$ closure of the subgroup generated by $PSU(1,1)^{(n)}$
and $PSU(1,1)^{(m)}$ in $Diff(S^1)$. Let $\mathfrak g$ denote the set of vector fields $X$ such that
the corresponding one parameter group is contained in $G$. In a standard way $\mathfrak g$ is
a vector space and a Lie algebra, using
$$exp(t(X+Y)) = \lim_{n\to\infty} \left(exp(tX/n)\circ exp(tY/n)\right)^n$$
and
$$exp(t^2[X,Y]) = \lim_{n\to\infty} \left(exp(tX/n)\circ exp(tY/n)\circ exp(-tX/n)\circ exp(-tY/n)\right)^{n^2}$$

It is obvious that $\mathfrak g$ contains the Lie algebras of $PSU(1,1)^{(n)}$ and $PSU(1,1)^{(m)}$.
We claim that this, together with $(n,m) = 1$, implies that $\mathfrak g$ contains the Lie algebra of all trigonometric vector fields. To prove this, it suffices to show that if $(n,m)=1$, then the Lie algebra generated by $L_{\pm n}$ and $L_{\pm m}$ is the entire Witt algebra. The repeated adjoint action of the $L_{\pm m}$ on $L_n$ generates all $L_{n+km}$, $k\in\mathbb Z$; similarly the repeated adjoint action of the $L_{\pm n}$ on $L_m$ generates all $L_{m+ln}$, $l\in\mathbb Z$. Now $(n,m)=1$ implies that $\{km+ln:k,l\in\mathbb Z\}=\mathbb Z$. Thus the Lie algebra generated by $L_{\pm n}$ and $L_{\pm m}$ is the entire Witt algebra. This proves the claim.

It now follows that $\mathfrak g$ is dense in smooth vector fields. Since $\mathfrak g$ is $C^{\infty}$ closed, $\mathfrak g$ is the Lie algebra of all smooth vector fields. Thus all one parameter subgroups of $Diff(S^1)$ belong to $G$, and this implies $G=Diff(S^1)$.
\end{proof}

Since the intersection of $PSU(1,1)^{(n)}$
and $PSU(1,1)^{(m)}$ is the group of rotations, part (b) of Theorem \ref{theorem1} can be restated in the following way.

\begin{theorem}\label{theorem1'}The group of diffeomorphisms of finite type equals the amalgam of the subgroups
$PSU(1,1)^{(n)}$, $n=1,2,..$, i.e. it is the free product of these subgroups, modulo the obvious relations
arising from the common intersection, $Rot(S^1)$.\end{theorem}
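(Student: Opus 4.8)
Theorem \ref{theorem1'} asserts that the group of finite-type diffeomorphisms is the amalgamated free product $\ast_{\mathrm{Rot}(S^1)}\, PSU(1,1)^{(n)}$. Since we have already realized $PSU(1,1)^{(n)}$ as a subgroup of $\mathrm{Diff}(S^1)$ via $i_n$, and shown by Lemma \ref{factor3} that every element of $PSU(1,1)^{(n)}$ factors uniquely as $\mathrm{Rot}(\theta)\circ\phi_n(w_n)$, the content of the theorem is exactly the assertion that a reduced word in the amalgam maps to the identity only if it is already trivial — equivalently, the uniqueness of the factorization in part (b) of Theorem \ref{theorem1}. So the plan is to prove that uniqueness statement directly.

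Let me describe the approach. Suppose
$$\lambda\circ\phi_{i_n}(w_{i_n})\circ\cdots\circ\phi_{i_1}(w_{i_1}) = \lambda'\circ\phi_{i'_m}(w'_{i'_m})\circ\cdots\circ\phi_{i'_1}(w'_{i'_1})$$
are two reduced expressions (all $w\in\Delta\setminus\{0\}$, consecutive indices distinct). The standard strategy for establishing that a family of subgroups generates an amalgam is a normal-form / ping-pong argument: one must exhibit an action of the generated group on some space, together with disjoint ``attracting'' regions, so that a nontrivial reduced word moves a base point out of a neutral region. The natural candidate here is the triangular factorization (conformal welding) developed in Subsection \ref{trifact}. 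Using Lemma \ref{factor3}(a), the generator $\phi_n(w_n)$ has welding data $u(\phi_n)(z) = z(1+w_nz^n)^{-1/n}$, $a = (1-|w_n|^2)^{1/n}$, and the Proposition \ref{composition1} and its Corollary tell us how $u$, $a$, $m$, $L$ transform under post-composition by such a building block. The plan is to track how the ``diagonal'' coordinate $a$ and the leading Taylor/Laurent data of $u$ and $L$ evolve along the word, and show that a reduced word of positive length produces a diffeomorphism whose welding data is not that of a rotation — in particular that the innermost nonzero $w_{i_1}$ leaves a detectable trace.

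Concretely, the key steps I would carry out are: (1) Record from the Corollary the precise effect of $\phi\mapsto\phi\circ\phi_n(w_n)$ on $u$ near $0$: the new $u(\phi\circ\phi_n(w_n))$ is, up to the known normalization, $u(\phi)$ precomposed with $z\mapsto$ (an $n$-fold symmetrized Möbius map with parameter $w_n$), so its Taylor coefficients in degrees $1,\dots,n$ are determined by $w_n$ and $u(\phi)'(\bar w_n^{\,?})$-type quantities, while lower-order data of $u(\phi)$ is ``pushed up.'' (2) Argue inductively that after applying the word $\phi_{i_j}(w_{i_j})\circ\cdots\circ\phi_{i_1}(w_{i_1})$ the welding function $u$ has a nontrivial term coming from $w_{i_1}$ that cannot be cancelled, because each subsequent building block $\phi_{i_{j+1}}$ with $i_{j+1}\neq i_j$ acts on Taylor modes in a ``staggered'' pattern (multiples of $i_{j+1}$) that cannot annihilate the mode created by $i_1$; the coprimality-of-consecutive-indices hypothesis $i_j\neq i_{j+1}$ is what keeps the $n$-fold-symmetric structures from collapsing. (3) Peel off the rotation $\lambda$ (it only affects $m$, not $a$ or the shape of $u$), reduce to comparing two reduced words, cancel common leftmost factors using uniqueness of welding (Theorem \ref{QS}), and iterate to conclude the two words are identical. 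Alternatively — and this may be cleaner — I would set up a genuine ping-pong: let $PSU(1,1)^{(n)}$ act and find, inside the space of welding data, subsets $X_n$ (welding functions ``of $n$-fold type deformed by a nonzero $w_n$'') that are permuted appropriately, using Proposition \ref{composition1} to verify the ping-pong inclusions.

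The main obstacle I anticipate is precisely the bookkeeping in step (2): because $a(\sigma_2)\neq a(\phi_2)a(\phi_1)$ in general (as the Remarks after Proposition \ref{composition1} warn), there is no multiplicative invariant that trivially separates words, so one genuinely has to control the interaction of the $n$-fold and $m$-fold symmetric structures and show no miraculous cancellation occurs. Making the ``staggered modes cannot cancel'' heuristic into a rigorous non-vanishing statement — essentially a statement that iterated compositions of the symmetrized Möbius maps $z\mapsto z(\tfrac{z^n+\bar w_n}{1+w_nz^n})$-type maps with distinct consecutive $n$'s never return a power of $z$ unless all parameters vanish — is the crux. I expect this can be extracted from the explicit Corollary formulas together with a degree/valuation argument on Taylor coefficients, but it is the step where the real work lies.
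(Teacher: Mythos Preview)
Your approach is genuinely different from the paper's, and the gap you flag at the end is real and, with the tools you propose, probably fatal. You plan to track the welding data $(u,a,m,L)$ along a reduced word using Proposition~\ref{composition1} and its Corollary. But Proposition~\ref{composition1} only computes the factorization of $\phi\circ\phi_1$ (i.e.\ post-composition by the $n=1$ building block), and the Corollary extends this to $\phi\circ\phi_n$ only under the symmetry hypothesis $\phi(z^n)=\phi(z)^n$. Once you have composed $\phi_{i_2}\circ\phi_{i_1}$ with $i_1\ne i_2$, the result has no such $i_3$-fold symmetry, so you cannot iterate. The paper says this explicitly: ``I do not know how to find the triangular factorization for something so seemingly simple as $\phi_3\circ\phi_2$.'' So the very formulas you intend to exploit are unavailable for the general reduced word, and your ``staggered Taylor modes cannot cancel'' heuristic has no machinery behind it. A ping-pong setup in welding space would face the same obstruction.

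The paper's argument avoids welding entirely and uses a much more robust invariant: the \emph{degree} of the algebraic function underlying a finite-type diffeomorphism. Each $\phi_m$ is a branch of an algebraic multivalued map of degree $m$ (Subsection~\ref{basiccase}), and the key Lemma in Subsection~\ref{realproof} shows by induction that a reduced word $\phi_{i_n}(w_{i_n})\circ\cdots\circ\phi_{i_1}(w_{i_1})$ with all $w_{i_j}\ne 0$ and $i_j\ne i_{j+1}$ has degree
\[
\prod_{j=1}^{n} i_j \Big/ \prod_{k=1}^{n-1}(i_k,i_{k+1}).
\]
If such a word, times a rotation, equalled the identity on $S^1$, it would equal $z$ as an algebraic function, hence have degree~$1$; the formula forces $n=1$, $i_1=1$, and then $w_1=0$, a contradiction. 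The point is that degree is multiplicative in a controllable way under composition of branched covers (count preimages of a generic point), whereas the welding coordinates $(u,a,m,L)$ are not --- exactly the difficulty the Remarks after Proposition~\ref{composition1} warn about. What the degree approach buys is a single integer invariant that survives arbitrary composition; what your approach would buy, if it worked, is finer coordinate information, but at present there is no way to compute it.
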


\begin{lemma} Suppose that $\sigma=\phi_{i_n}(w_{i_n})\circ..\circ \phi_{i_1}(w_{i_1})$
where $w_i \in\Delta\setminus\{0\}$, $i = i_1,..,i_n$, and
 $i_j\ne  i_{j+1}, j = 1,..n-1$. Then the degree of $Z_{\sigma}$ and $W_{\sigma}$ equal
\begin{equation}\label{degreeformula} \prod_{j=1}^ni_j/\prod_{k=1}^{n-1}(i_k,i_{k+1})\end{equation}
In particular given a sequence w with non-vanishing terms, and $\sigma_N=\phi_N(w_n)\circ ..\circ \phi_1(w_1)$,
$$degree(Z_{\sigma_N} ) = N!$$

\end{lemma}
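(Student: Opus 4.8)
The plan is to compute the degrees of $Z_\sigma$ and $W_\sigma$ by induction on $n$, the number of factors, using the composition construction of Definition \ref{composition} together with the degree bookkeeping in Proposition \ref{prop2}. Recall that for an algebraic homeomorphism $\phi$, the degree of $Z_\phi$ equals the degree of $W_\phi$ equals the number of boundary components $|\pi_0(S_\phi)|$, and this is the degree of the defining irreducible polynomial in each variable. So everything comes down to tracking how the bidegree of the defining polynomial changes under composition with one more $\phi_{i}$.

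First I would set up the base case: for $\sigma = \phi_{i_1}(w_{i_1})$ with $w_{i_1}\ne 0$, Subsection \ref{basiccase} shows the defining equation $w^{i_1}(1+w_{i_1}z^{i_1}) - (z^{i_1}+\bar w_{i_1}) = 0$ is irreducible of degree $i_1$ in each of $z,w$, so $\deg(Z_\sigma) = \deg(W_\sigma) = i_1$, matching (\ref{degreeformula}) since the denominator is an empty product. For the inductive step, write $\sigma = \phi_{i_n}(w_{i_n})\circ\tau$ with $\tau = \phi_{i_{n-1}}(w_{i_{n-1}})\circ\cdots\circ\phi_{i_1}(w_{i_1})$, and assume $\deg(Z_\tau) = \deg(W_\tau) = \prod_{j=1}^{n-1} i_j / \prod_{k=1}^{n-2}(i_k,i_{k+1}) =: d_\tau$. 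The surface $\tilde\Sigma_\sigma$ is the fibre product over the condition $Z_\tau = W_{\phi_{i_n}}$, and $\tilde Z = Z_{\phi_{i_n}}\circ pr_\tau$ while $\tilde W = W_\tau\circ pr_{\tau'}$ — here I would eliminate variables explicitly: the defining polynomial of $\sigma$ is obtained as the resultant (or the direct elimination) of $p_\tau(z, t) = 0$ and $p_{\phi_{i_n}}(t, w) = 0$ with respect to the intermediate variable $t$, exactly as in the worked example of $\phi_n\circ\phi_m$ where the equation $(z^m+\bar w_m)^{n/d}(1-w_nw^n)^{m/d} - (w^n-\bar w_n)^{m/d}(1+w_mz^m)^{n/d} = 0$ arises. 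The key computation is that the degree in $z$ of the eliminant is $\deg_z(p_\tau)\cdot\deg_t(p_{\phi_{i_n}})$ divided by the common factor forced by the branching structure, which by the symmetry/ramification analysis (as in the $\phi_n\circ\phi_m$ case) is exactly $(i_{n-1}, i_n)$ — the gcd of the exponent $i_{n-1}$ appearing in $p_\tau$'s leading $t$-behaviour and the exponent $i_n$ in $p_{\phi_{i_n}}$. This gives $\deg(Z_\sigma) = d_\tau \cdot i_n / (i_{n-1}, i_n)$, which is precisely (\ref{degreeformula}).

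The main obstacle I anticipate is justifying that the eliminant is \emph{irreducible} and that the gcd-cancellation is exactly $(i_{n-1},i_n)$ and no more. For the gcd part, the right tool is the ramification count: at a branch point of $Z_\tau$ in $\Delta$ the local behaviour of the intermediate coordinate $t = Z_\tau = W_{\phi_{i_n}}$ meets the $i_n$-fold branching of $\phi_{i_n}$, and the local monodromy group is the subgroup of $\mathbb Z/i_{n-1}\times\mathbb Z/i_n$ pulling back consistently, whose index accounts for the factor $i_n/(i_{n-1},i_n)$; one computes $|\pi_0(S)|$ directly from how the $i_n$ preimage circles of $W_{\phi_{i_n}}$ lift through $Z_\tau|_{S_\tau}$, which is a homeomorphism on each component by Proposition \ref{prop2}(a). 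Crucially the hypothesis $i_j \ne i_{j+1}$ keeps all the $w_{i_j}$ genuinely nonzero and the consecutive exponents distinct, so the relevant leading coefficients never degenerate — this is where that hypothesis is used. For irreducibility, since we already know $\deg(Z_\sigma) = |\pi_0(S_\sigma)|$ must hold for the actual (reduced, smooth) surface attached to the homeomorphism $\sigma$ by Proposition \ref{prop1}, and $\sigma$ restricted to a single circle already realizes a branched cover of the computed degree, any proper factor of the eliminant would force $\sigma$ to be algebraic of strictly smaller degree, contradicting the degree lower bound coming from counting preimages. Finally, the special case $\sigma_N = \phi_N(w_N)\circ\cdots\circ\phi_1(w_1)$ has $i_j = j$, so $(i_k,i_{k+1}) = (k,k+1) = 1$ for every $k$, and (\ref{degreeformula}) collapses to $\prod_{j=1}^N j = N!$, giving $\deg(Z_{\sigma_N}) = N!$ as claimed.
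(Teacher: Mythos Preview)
Your resultant/fibre-product approach has a genuine gap at exactly the point you flag: the ``gcd cancellation''. The inductive hypothesis you carry is only the number $d_\tau$, but that is not enough information to see why composing with $\phi_{i_n}$ divides by $(i_{n-1},i_n)$ rather than by some other factor. Your proposed fix via ``local monodromy in $\mathbb Z/i_{n-1}\times\mathbb Z/i_n$'' does not work as stated, because the branch points of $Z_\tau$ do not all have ramification index $i_{n-1}$ (that holds only for the branch points coming from the last factor $\phi_{i_{n-1}}$; $\tau$ has other factors with other indices). Likewise the fibre product $\tilde\Sigma_\sigma$ has $i_n\cdot d_\tau$ boundary circles in total and is typically disconnected; identifying which component contains the distinguished circle, and how many circles it carries, requires more than the bare degree $d_\tau$. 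Your irreducibility argument is then circular: the ``degree lower bound from counting preimages'' is the very thing to be proved.

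The paper sidesteps all of this by abandoning the fibre product (which it explicitly notes may not even be smooth) and instead counting branches directly. Write $\phi_m = z^{1/m}\circ\phi_1(w_m)\circ z^m$, so that $\sigma$ is a branch of an explicit tower of maps $z^{i_1}$, $\phi_1$, $z^{1/i_1}$, $z^{i_2}$, $\ldots$ The crucial observation---and the piece of structure you are missing in your induction---is that after $n-1$ steps the set of $d_\tau$ values $w_0$ over a generic $z_0$ carries a \emph{free action of the $i_{n-1}$-th roots of unity}, simply because the outermost operation is $z^{1/i_{n-1}}$. Applying $z^{i_n}$ then collapses each $\mu_{i_{n-1}}$-orbit to $i_{n-1}/(i_{n-1},i_n)$ points (since $\zeta\mapsto\zeta^{i_n}$ on $\mu_{i_{n-1}}$ has image of that size), and generically distinct orbits stay distinct; this yields $d_\tau/(i_{n-1},i_n)$ values, and the final $z^{1/i_n}$ multiplies by $i_n$. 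So the right inductive hypothesis is not just the count $d_\tau$ but also this root-of-unity symmetry on the fibre, and with that in hand the argument becomes an elementary preimage count with no need for resultants, smoothness, or irreducibility of an eliminant.
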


\begin{proof} Suppose that $n=1$, and let $m=i_1$. In this case, in subsection \ref{basiccase}, we saw that the associated maps $Z,W:\hat{\Sigma}_{\phi_{m}}\to \hat{D}$ have degree $m$. But more simply, in the terminology of chapter 8 of \cite{Alhfors}, we can view $\phi_m$ as a branch in a neighborhood of $S^1$ for the algebraic (multivalued) function
\begin{equation}\label{relation0}w=z^{1/m}\circ \phi_1(w_m)\circ z^m\end{equation}
(which happens to map $\Delta\to\Delta$, $S^1\to S^1$, and $\Delta^*\to\Delta^*$). We can calculate the degree
by choosing any point $z_0\in \Delta$ such that $\phi_m(w_m;z_0^m)\ne 0$ (e.g. $z_0=0$, because $w_m\ne 0$) and observing that there are exactly $m$ distinct values $w_0$ such that there exists a (germ of a) branch $f$ of the multivalued expression (\ref{relation0}) with $f(z_0)=w_0$. Of course we could also consider the ``inverse",
and find that given a generic $w_0$, there are $m$ corresponding points $z_0$. In any event the degree is $m$.

Similarly the composition
$\phi_{i_n}(w_{i_n})\circ ..\circ \phi_{i_1}(w_{i_1})$
(where $w_{i_j}\ne 0$ and $i_j\ne i_{j-1}$ for all $j$) is a branch in a neighborhood of $S^1$ for the algebraic function
$$w=z^{1/i_n}\circ \phi_{1}(w_{i_n})\circ z^{i_n} \circ z^{1/i_{n-1}} \circ \phi_1(w_{i_{n-1}})\circ ..\circ z^{1/i_1}\circ \phi_{1}(w_{i_1})\circ z^{i_1}$$ or as we prefer,
\begin{equation}\label{secondexp}w=z^{1/i_n}\circ \phi_{1}(w_{i_n})\circ z^{i_n/(i_n,i_{n-1})} \circ z^{1/(i_{n-1}/(i_n,i_{n-1}))} \circ \phi_1(w_{i_{n-1}})\circ ..\circ z^{1/(i_1/(i_2,i_{1}))}\circ \phi_{1}(w_{i_1})\circ z^{i_1}\end{equation}
To prove the Lemma, it suffices to showing this algebraic function has degree given by the formula (\ref{degreeformula}), as we observed in (b) of Proposition \ref{prop2}. We do this by induction on $n$. We can focus on $\Delta$, because these compositions map $\Delta$ into $\Delta$. The degree is obviously $\le (\ref{degreeformula}) $, so the point is to prove equality. We considered $n=1$ above. Suppose that $n>1$. By induction, aside from a finite number of exceptional points in $\Delta$, for $z_0\in\Delta$ a nonexceptional point, there will be exactly
\begin{equation}\label{n-1number}\prod_{j=1}^{n-1}i_j/\prod_{k=1}^{n-2}(i_k,i_{k+1})\end{equation} values $w_0\in\Delta$ such that there is a (germ of a) branch $f$ for
\begin{equation}\label{relation}w_1=z^{1/i_{n-1}}\circ \phi_{1}(w_{i_{n-1}})\circ z^{i_{n-1}/(i_{n-1},i_{n-2})} \circ ..\circ z^{1/(i_1/(i_2,i_{1}))}\circ \phi_{1}(w_{i_1})\circ z^{i_1}(z)\end{equation} such that $f(z_0)=w_0$.
For given $z_0$, the set of $w_0$ is acted upon by the $i_{n-1}$ roots of unity, and when $w_0\ne 0$ this action is free. We can perturb $z_0$ slightly if necessary, so that all of the $w_0\ne 0$ (we can do this, because the inverse
relation has the same properties, so that we can assume the $z_0$ and $w_0$ are simultaneously nonexceptional). In this case there will be $1/(i_{n-1},i_n)$ times (\ref{n-1number}) distinct values $w_1$ such that there is a (germ of a) branch $f$ for
\begin{equation}\label{relation2}w_2=\phi_{i_n}\circ z^{i_{n}/i_{n-1}}\circ \phi_{1}(w_{i_{n-1}})\circ z^{i_{n-1}/(i_{n-1},i_{n-2})} \circ ..\circ z^{1/(i_1/(i_2,i_{1}))}\circ \phi_{1}(w_{i_1})\circ z^{i_1}(z)\end{equation} such that $f(z_0)=w_1$. We can assume that $\phi_{i_n}(w_1^{i_n})\ne 0$. Then for generic $z_0$, there will be (\ref{degreeformula}) distinct values $w'$ such that there is a branch $f$ for (\ref{secondexp}) such that $f(z_0)=w'$. Thus the degree for (\ref{secondexp}) is given by (\ref{degreeformula}).

\end{proof}

\begin{remark}Note that this formula applies even if for some $j$, $i_j=i_{j+1}$, provided that $w_{i_j}\ne -w_{i_{j+1}}$.
\end{remark}

To prove Theorem \ref{theorem1'}, suppose by way of contradiction that
$$\lambda\phi_{i_n}(w_{i_n})\circ ..\circ \phi_{i_1}(w_{i_1})(z)=z, \quad z\in S^1$$
where $\lambda\in S^1$, $w_{i_j}\ne 0$, and $i_j\ne i_{j-1}$ for all $j$,
for some $n$. This extends to an equality of algebraic functions, and we can consider the degree of both sides.
Unless $n=1$ and $i_1=1$, the degree of the left hand side is not equal to $1$, the degree
of the right hand side. Thus by Lemma \ref{factor3} (or obviously), $\lambda=1$ and $w_1=0$, a contradiction.
This completes the proof of Theorem \ref{theorem1'}.

\section{Algebraic Homeomorphisms, II}\label{alghomeoII}

Let $RatHomeo(S^1)$ denote the set of rational homeomorphisms of $S^1$.
Via the linear fractional transformations
$$X(z)=i\frac{1-z}{z+z} \text{   and its inverse  }  Z(x)=\frac{i-x}{i+x}  $$
the sphere with the real structure $R(z)=1/z^*$ is isomorphic with the sphere and its real structure determined by conjugation.
In the latter realization, which we will refer to as the real point of view, rational homeomorphisms of the circle, modulo rotations, are identified with real rational homeomorphisms of the line. We let $PolyHomeo(S^1)$ denote the set of homeomorphisms of the circle
which correspond to polynomial homeomorphisms of $\mathbb R$.

\begin{proposition} (a) $RatHomeo(S^1)$ is a semigroup and $PolyHomeo(S^1)$ is a subsemigroup.
Both are graded by odd degree (as maps of the sphere).

(b) The group of rational homeomorphisms which have rational inverses is $PSU(1,1)$.

(c) The intersection of $RatHomeo(S^1)$ with the group of diffeomorphisms of finite type is also $PSU(1,1)$.

(d) An algebraic homeomorphism is quasisymmetric.
\end{proposition}

\begin{proof} (a) The first part of (a) is obvious. The degree of $R\in RatHomeo(S^1)$ is odd,
because the components of $R^{-1}(S^1)$ on each side of $S^1$ are reflections of one another (This is also obvious from
the real point of view). The fact that degree is multiplicative is well-known.

(b) The degree of a rational homeomorphism with rational inverse has to be one, hence it has to be a linear fractional transformation.

(c) If $R \notin PSU(1,1) $, then the degree of $R$ is at least three. There will be at least on component of $R^{-1}(S^1)$
in $\Delta$. One side of such a component will map to $\Delta$ and the other will map to $\Delta^*$. Thus $R$ will not preserve
the decomposition $\mathbb P^1=\Delta\sqcup S^1 \sqcup \Delta^*$. Thus $R$ cannot be of finite type.

(d) At a point where the derivative is zero, an algebraic homeomorphism will look like a power. This is locally quasisymmetric.
This implies (d).

\end{proof}

\begin{example} Consider the set of polynomial homeomorphisms of degree $2k+1$. From the real point of view,
this is in bijection with the set of polynomials of degree $2k$ which are nonnegative. This set is contracted to a point by the
homotopy $\lambda x^{2k}+(1-\lambda)p_{2k}$. The set of rational homeomorphisms of degree $2k+1$ similarly corresponds to
a space of rational functions, although it is not as clear how to describe this space (because the condition that the integral is
rational is a nontrivial constraint). Nonetheless this space is contracted
to $x^{2k}$ by the same homotopy.

It is not clear whether rational homeomorphisms are dense in $Homeo(S^1)$.\end{example}

Recall that a quasisymmetric homeomorphism $\sigma$ has a unique triangular decomposition $\sigma=lmau$.
It is obvious that if $u$ and $l$ are rational (algebraic), then $\sigma$ is rational (algebraic, respectively).
It is natural to ask about the converses. We will see that $\sigma$ is rational definitely does not imply that $u$ and $l$ are rational.
It seems unlikely that $\sigma$ is algebraic implies that $u$ and $l$ are algebraic, but this appears to be open (we hope to resolve this in the sequel).

\begin{proposition} Suppose that $\sigma$ is the inverse of a rational homeomorphism, and let
$\sigma =l ma u$ be its triangular factorization. If $u$ is rational, then $\sigma\in PSU(1,1)$.
Similarly, if $\sigma$ and $L$ are rational, then $\sigma\in PSU(1,1)$.
\end{proposition}

\begin{proof} Suppose that $\sigma^{-1}=R$ is rational and $R$ is not a linear fractional transformation. The degree of $R$
(as a map of the sphere) is at least 3. The inverse image of $S^1$ will have at least three components, hence at least one component
in each of $\Delta$ and $\Delta^*$. This implies that $R$ will map a disk in $\Delta^*$ onto $\Delta$. Thus $R$ will have a zero in $\Delta^*$. In the proof of this proposition, without loss of generality, we can suppose that $R(\infty)=0$. For otherwise we can compose $R$ with a $g\in PSU(1,1)$ so that this is the case.

Let $f_+=mau$. Then $f_+(R(z)) =L(z)$ for $z$ in an annular neighborhood of $S^1$. Since $L$ is holomorphic in $\Delta^*$, we can analytically continue the left hand side to $z=\infty$; let $g(z)$ denote this analytic continuation along a neighborhood of some path from $S^1$ to $\infty$. By way of contradiction, suppose that $f_+$ is rational. In this event, $g(z)=f_+(R(z))$. Thus $f_+(R(\infty))=f_+(0)=0$. But $L(\infty)=\infty$, a contradiction. Thus $f_+$ cannot be rational.
This proves the proposition for the inverse of a rational homeomorphism.

Suppose that $\sigma$ is rational and not a linear fractional transformation. The triangular factorization of the inverse is
given by
$$\sigma^{-1}=\frac{1}{U(\frac{1}{z^*})^*}\circ m^* a\circ \frac{1}{L(\frac{1}{z^*})^*}$$
This, and the first part of the proof, implies that $L$ cannot be rational.
\end{proof}

Another question one can ask is, given a very simple $u$, e.g. $u$ polynomial, does this imply that $l$ is algebraic? The answer is no.

\begin{example} Define
$$\sigma(z)=\frac{1+cos(\frac{\pi}{1+z})}{1-cos(\frac{\pi}{1+z})} $$
Then
$$u(z)=z(1+z/2), \quad a=8/\pi^2, \quad l(z)=\frac{1+cos(\frac{2\pi}{\sqrt{4+\pi^2z}})}{1-cos(\frac{2\pi}{\sqrt{4+\pi^2z}})} $$
\end{example}

For the general question of how to obtain $L$ from $u$, see \cite{GGV}.

\section{Diffeomorphisms: Proof of Theorem \ref{corelemma}}\label{proofofa}

We recall the statement to be proved:

\begin{theorem}\label{parta} Fix a permutation $p:\mathbb N\to \mathbb N:n\to n'$. For $s=1,2,..$ if $w\in \prod_{n=1}^{\infty}\Delta$
and $\sum_{n>0} n^{s-1}\vert w_n\vert < \infty$, then the limit
$$\sigma(p,w; z) = z
\prod_{n=1}^{\infty}\frac{
(1 + \bar{w}_{n'}\sigma_{n-1}(z)^{-n'})^{1/n'}}{
(1 + w_{n'}\sigma_{n-1}(z)^{n'}})^{1/n'}  $$ exists and defines a $C^s$ homeomorphism of $S^1$.\end{theorem}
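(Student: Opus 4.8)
The statement is an estimate: convergence of the infinite product together with $C^s$ regularity of the limit, under the hypothesis $\sum n^{s-1}|w_n| < \infty$. I would organize the proof around a single uniform bound that controls the $C^s$ norm of the partial products $\sigma_N$ and of their ratios $\sigma_N \circ \sigma_{N-1}^{-1}$, and then pass to the limit in $C^s$. The key observation is that composing with one factor $\phi_{n'}(w_{n'})$ is a ``small perturbation of the identity'' whose size, measured in $C^s$, is $O(n^{s-1}|w_n|)$ rather than $O(|w_n|)$ — the loss of $n^{s-1}$ coming from the $n'$ derivatives that can land on $\sigma_{n-1}(z)^{\pm n'}$ via the chain rule. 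Summability of $n^{s-1}|w_n|$ then makes the telescoping product converge.

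\textbf{Step 1: A single-factor estimate.} Write $\sigma_N = \phi_{N'}(w_{N'}) \circ \sigma_{N-1}$, so that $\sigma_N = g_N \circ \sigma_{N-1}$ with $g_N(z) = z\,(1+\bar w_{N'}\sigma_{N-1}(z)^{-N'})^{1/N'}(1+w_{N'}\sigma_{N-1}(z)^{N'})^{-1/N'}$. The point is to show $\|g_N - \mathrm{id}\|_{C^s} \le C\,N^{s-1}|w_{N'}|$ once $\|\sigma_{N-1}\|_{C^s}$ and $\|\sigma_{N-1}^{-1}\|_{C^s}$ are under control, with $C$ depending only on $s$ and on those bounds. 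Expanding $\log g_N(z)/z = -\tfrac1{N'}\log(1+w_{N'}\sigma_{N-1}(z)^{N'}) + \tfrac1{N'}\log(1+\bar w_{N'}\sigma_{N-1}(z)^{-N'})$, the scalar prefactor $1/N'$ compensates all but one of the $N'$ factors produced when differentiating $\sigma_{N-1}(z)^{\pm N'}$ up to $s$ times; using Fa\`a di Bruno / the multinomial expansion and $|w_{N'}|<1$, one gets the stated bound. (Here one uses $s\ge 1$ only to know, via Step 3, that $\sigma_{N-1}^{-1}$ is as smooth as $\sigma_{N-1}$, which feeds back into the constant $C$.)

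\textbf{Step 2: Telescoping and uniform bounds.} From Step 1, $\|\sigma_N\|_{C^s} \le \|\sigma_{N-1}\|_{C^s}\big(1 + C\,N^{s-1}|w_{N'}|\big)$ modulo the usual product-rule/chain-rule corrections, and iterating gives $\sup_N \|\sigma_N\|_{C^s} \le \exp\big(C\sum_n n^{s-1}|w_n|\big) < \infty$ — here I use that $\sum n^{s-1}|w_n|$ is permutation-invariant, so the ordering $p$ is irrelevant for this bound. The same reasoning bounds $\sup_N\|\sigma_N^{-1}\|_{C^s}$ using the inverse function theorem (Step 3). Then $\|\sigma_N - \sigma_{N-1}\|_{C^s} = \|(g_N - \mathrm{id})\circ \sigma_{N-1}\|_{C^s} \le C'\,N^{s-1}|w_{N'}|$, so $(\sigma_N)$ is Cauchy in $C^s(S^1,S^1)$ and converges to a $C^s$ map $\sigma$ of degree one.

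\textbf{Step 3: Invertibility of the limit.} This is where $s\ge 1$ is essential. Each $\sigma_N$ is a diffeomorphism; the derivative $d\sigma_N$ has the explicit product form (analogous to (\ref{derivative})) $\prod_{n\le N}(1-|w_{n'}|^2)/|1+w_{n'}\sigma_{n-1}^{n'}|^2$, which converges uniformly to a \emph{strictly positive} continuous limit because $\sum \big|\log\big((1-|w_n|^2)/|1+w_n(\cdot)|^2\big)\big|$ is dominated by $\sum |w_n| < \infty$ (the $s=1$ case of the hypothesis). Hence $\sigma'$ is continuous and nowhere zero, so $\sigma$ is a local $C^1$ diffeomorphism, and being a degree-one map of $S^1$ it is a homeomorphism; the inverse function theorem then gives $\sigma^{-1}\in C^s$. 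For general $s$ one combines this with the $C^s$ convergence from Step 2.

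\textbf{Main obstacle.} The crux is Step 1 — proving that the $1/n'$ prefactor really does absorb the factor of $n'$ that the chain rule generates at \emph{every} order of differentiation up to $s$, uniformly in the ``background'' diffeomorphism $\sigma_{N-1}$, and with constants that depend only on a priori $C^s$ bounds on $\sigma_{N-1}$ and its inverse (which are themselves being proved inductively). Making this bookkeeping airtight — i.e. setting up the induction on $N$ so that the $C^s$ bounds used in estimating $g_N$ are exactly the ones already established for $\sigma_{N-1}$ — is the delicate part; the rest is organization. I would also need to check the base case and that the correction terms from differentiating products/compositions don't secretly reintroduce an uncompensated power of $n'$, which is a genuine (if routine-looking) combinatorial check with Fa\`a di Bruno's formula.
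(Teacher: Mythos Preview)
Your approach is correct and uses the same essential ingredients as the paper---the explicit product formula for $\Sigma_N'$ to handle $s=1$ (your Step~3 is exactly the paper's Lemma~4), and Fa\`a di Bruno together with induction for higher $s$. The organization differs in a way that resolves precisely what you flag as the ``main obstacle'': instead of bounding $\sigma_N$ in $C^s$ via an induction on $N$ whose constants depend on $C^s$ bounds being simultaneously established, the paper passes to the logarithmic derivative $B(\theta):=\log\Sigma'(\theta)=\sum_{n}B_{n'}\circ\Sigma_{n-1}$ with $B_n=\log\Phi_n'$, which is an \emph{additive} decomposition. One then proves the clean single-factor bound $|B_n^{(k)}|\le c\,n^{k}|w_n|(1-|w_n|)^{-k}$ (Lemma~5, via the Eulerian-polynomial identity) and inducts on $s$ alone: the theorem at level $s-1$ supplies uniform bounds on $\Sigma_{n-1}^{(j)}$ for $j\le s-1$, which is exactly what the Bell polynomials in Fa\`a di Bruno for $B^{(s-1)}$ require. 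This sidesteps the bootstrap circularity you identify. Two minor slips in your write-up: if $\sigma_N=g_N\circ\sigma_{N-1}$ then $g_N=\phi_{N'}(w_{N'})$ is a fixed map not involving $\sigma_{N-1}$, so your displayed formula for $g_N$ is garbled; and the factor $N^{s-1}$ in Step~1 should read $(N')^{s-1}$ for a general permutation $p$. Neither affects the substance.
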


We first consider the case $s=1$.

\begin{lemma}\label{lemma4} (a)
$$\Phi_n'(\theta)=\frac{1-\vert w_n\vert^2}{\vert 1+w_nz^n\vert^2},\quad \vert z\vert=1$$

(b)
$$\Sigma_N'(\theta)=\prod_{k=1}^N\Phi_{k'}'(\Sigma_{k-1}(\theta))=\prod_{k=1}^N\frac{1-\vert w_{k'}\vert^2}{\vert 1+w_{k'}\sigma_{k-1}^{k'}\vert^2}$$

(c) If $(w_n)$ is absolutely summable, then the product expression for $\Sigma'$,
$$\Sigma'(\theta)=\prod_{n=1}^{\infty}\frac{1-\vert w_{n'}\vert^2}{\vert 1+w_{n'}\sigma_{n-1}(z)^{n'}\vert^2}$$
is absolutely convergent on $\mathbb R$, and $\sigma$ is a $C^1$ diffeomorphism of $S^1$.\end{lemma}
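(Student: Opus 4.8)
The plan is to handle the three parts in sequence: (a) is a direct computation, (b) is an induction via the chain rule, and (c) is a uniform-convergence argument. The case $s=1$ is the easy end of Theorem \ref{parta}, and there is no serious obstacle here.

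For (a), I would exploit that on $S^1$ one has $z^{-n}=\overline{z^n}$, so $1+\bar w_n z^{-n}=\overline{1+w_n z^n}$, and both $1+w_nz^n$ and $1+\bar w_nz^{-n}$ lie in the open right half plane (their real part is $1+\mathrm{Re}(w_nz^n)\ge 1-|w_n|>0$). Hence, with the principal branches, the fractional factor in $\phi_n(w_n;z)$ equals $e^{-2i\psi_n(\theta)/n}$, where $\psi_n(\theta):=\Theta(1+w_nz^n)=\mathrm{Im}\log(1+w_ne^{in\theta})\in(-\pi/2,\pi/2)$, so a lift of $\phi_n$ is $\Phi_n(\theta)=\theta-\tfrac2n\psi_n(\theta)$. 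Differentiating, $\psi_n'(\theta)=n\,\mathrm{Re}\tfrac{w_nz^n}{1+w_nz^n}$, whence $\Phi_n'(\theta)=1-2\,\mathrm{Re}\tfrac{w_nz^n}{1+w_nz^n}=\mathrm{Re}\tfrac{1-w_nz^n}{1+w_nz^n}$; expanding this over $|1+w_nz^n|^2$ and using $|z|=1$ to kill the imaginary cross terms gives $\Phi_n'(\theta)=\frac{1-|w_n|^2}{|1+w_nz^n|^2}$. (Equivalently, $\phi_n$ is the pullback under $z\mapsto z^n$ of the disk automorphism $\zeta\mapsto\frac{\zeta+\bar w_n}{1+w_n\zeta}$, and $\Phi_n'$ is its angular derivative at $\zeta=z^n$.) For (b), since $\sigma_N=\phi_{N'}\circ\sigma_{N-1}$ the lifts compose, $\Sigma_N=\Phi_{N'}\circ\Sigma_{N-1}$, so the chain rule and induction give $\Sigma_N'(\theta)=\Phi_{N'}'(\Sigma_{N-1}(\theta))\,\Sigma_{N-1}'(\theta)=\prod_{k=1}^N\Phi_{k'}'(\Sigma_{k-1}(\theta))$; substituting (a) with argument $\sigma_{k-1}(z)=e^{i\Sigma_{k-1}(\theta)}$ yields the stated product.

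For (c), the key elementary estimate is $1-|w_n|\le|1+w_nz^n|\le 1+|w_n|$ on $S^1$, hence $\tfrac{1-|w_n|}{1+|w_n|}\le\Phi_n'(\theta)\le\tfrac{1+|w_n|}{1-|w_n|}$, and therefore $|\Phi_n'(\theta)-1|\le\frac{2|w_n|}{1-|w_n|}$ — a bound uniform in $\theta$, i.e. independent of whichever intermediate homeomorphism enters as argument. Since $\sum|w_n|<\infty$ forces $|w_n|\to0$, we get $\sum_n|\Phi_{n'}'(\Sigma_{n-1}(\theta))-1|\le\sum_n\frac{2|w_{n'}|}{1-|w_{n'}|}<\infty$ uniformly in $\theta$, so the infinite product $\Sigma_\infty'(\theta):=\prod_{n\ge1}\Phi_{n'}'(\Sigma_{n-1}(\theta))$ converges absolutely and uniformly to a continuous function which is strictly positive (every factor is positive and bounded below by $\tfrac{1-|w_n|}{1+|w_n|}$, so the tail products stay away from $0$). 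Separately, each circular factor of $\sigma_N$ is $e^{-2i\psi_{n'}/n'}$ with $|\psi_{n'}|\le C|w_{n'}|$, and $\sum_n\frac{|w_{n'}|}{n'}=\sum_m\frac{|w_m|}{m}<\infty$, so the partial products $\sigma_N\to\sigma$ and the lifts $\Sigma_N\to\Sigma$ converge uniformly. The standard theorem on termwise differentiation (uniform convergence of $\Sigma_N$ together with uniform convergence of $\Sigma_N'$) then gives $\Sigma\in C^1$ with $\Sigma'=\Sigma_\infty'>0$. Finally $\Sigma$ is $C^1$, strictly increasing since $\Sigma'>0$, and satisfies $\Sigma(\theta+2\pi)=\Sigma(\theta)+2\pi$, hence is a homeomorphism of $\mathbb R$ with $C^1$ inverse by the inverse function theorem; descending to $S^1$, $\sigma$ is a $C^1$ diffeomorphism.

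The only points that need care — and the reason the proof is clean — are that all estimates must be uniform in $\theta$ and independent of the a priori uncontrolled intermediate homeomorphisms $\sigma_{k-1}$, which works precisely because everything is evaluated on $S^1$ where $|\sigma_{k-1}|\equiv1$; and that one must check the product of derivatives does not degenerate to $0$, which is where summability of $(|w_n|)$ (not merely of $(|w_n|/n)$) enters. The real difficulty in Theorem \ref{parta} is postponed to $s\ge2$, where differentiating the product $s$ times forces control of the higher derivatives $\sigma_{k-1}^{(j)}$ that appear through the Fa\`a di Bruno expansion of the chain rule; that is not an issue for the present lemma.
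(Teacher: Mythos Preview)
Your proof is correct and follows the same route as the paper: part (a) by direct computation, part (b) by the chain rule, and part (c) by the two-sided bound $\tfrac{1-|w_n|}{1+|w_n|}\le\Phi_n'\le\tfrac{1+|w_n|}{1-|w_n|}$ together with the inverse function theorem. Your write-up is in fact more careful than the paper's, which simply records the upper bound and asserts convergence; you make explicit the uniform convergence of both $\Sigma_N$ and $\Sigma_N'$ (so that termwise differentiation is justified) and the lower bound that keeps the infinite product away from zero.
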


\begin{proof} (a) is a straightforward calculation. Part (b) follows from the chain rule,
$$\Sigma_N'(\theta)=\prod_{k=1}^N\Phi_{k'}'(\Sigma_{k-1}(\theta))$$
and part (a).

Assuming that $(w_n)$ is absolutely summable, the expression for the
derivative of $\Sigma$ is absolutely convergent, because
$$\prod_{n=1}^{\infty}\frac{1-\vert w_{n'}\vert^2}{\vert 1+w_{n'}\sigma_{n-1}(z)^{n}\vert^2}\le
\prod_{n=1}^{\infty}\frac{1-\vert w_n\vert^2}{(1-\vert w_{n}\vert)^2}=
\prod_{n=1}^{\infty}\frac{1+\vert w_n\vert}{(1-\vert w_{n}\vert)}$$
The derivative of $\Sigma$ is positive and continuous; together with the inverse function theorem,
this implies that $\Sigma$ and its inverse are $C^1$.\end{proof}

To investigate the higher derivatives of $\Sigma$, define
$$ B_n(\theta) := ln(\Phi_n'(\theta))= ln(\frac{1-\vert w_n\vert^2}{\vert1 + w_nz^n\vert^2} )$$
\begin{equation}\label{4.2}=-ln(1 + \bar{ w}_nz^{-n}) + ln(1-\vert w_n\vert^2) + ln(1 + w_nz^n), \quad z = e^{i\theta}\end{equation}
and
$$B(\theta) := ln(\Sigma'(\theta)) =\sum_{n=1}^{\infty}B_{n'}(\Sigma_{n-1}(\theta))$$

\begin{lemma}\label{lemma5} (a) For $s=1,2,..$,
$$B_n^{(s)}(\theta)= (in)^s \frac{w_nz^nA_{s-1}(-w_nz^n)}{(1 + w_nz^n)^s}+c.c., \quad z = e^{i\theta}$$
where the $A_{s-1}$ are the Eulerian polynomials.

(b) For given $s$ there is a constant $c=c(s)$ independent of $n$ such that
$$\vert B_n^{(s)}(\Sigma_{n-1}(\theta))\vert\le cn^s\vert w_n\vert(1-\vert w_n\vert)^{-s}$$
\end{lemma}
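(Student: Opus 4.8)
The plan is to treat parts (a) and (b) in sequence, deriving the closed form for the $s$-th derivative of $B_n$ first and then extracting the stated bound. For part (a), I would start from the expression (\ref{4.2}) for $B_n(\theta)$, noting that the middle term $\ln(1-|w_n|^2)$ is constant and drops out under differentiation, while the two remaining terms are complex conjugates of one another (since $z = e^{i\theta}$ makes $\overline{w_n z^n} = \bar w_n z^{-n}$). So it suffices to differentiate $f(\theta) := \ln(1 + w_n z^n)$ with $z = e^{i\theta}$ and add the complex conjugate. Writing $u = w_n z^n$, so that $\frac{d}{d\theta} u = in\, u$, the first derivative is $f'(\theta) = \frac{in\, u}{1+u}$. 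The key observation is that $\frac{d}{d\theta}$ acting on a rational function of $u$ whose denominator is a power of $(1+u)$ preserves that form, with the numerator degree and denominator power both controlled. An induction on $s$ shows $f^{(s)}(\theta) = (in)^s \frac{u\, A_{s-1}(-u)}{(1+u)^s}$ where $A_{s-1}$ is the $(s-1)$-st Eulerian polynomial; the inductive step reduces to the classical recursion for Eulerian polynomials, namely $A_s(t) = (1 + (s-1)t)A_{s-1}(t) + t(1-t)A_{s-1}'(t)$ (up to the standard index conventions), which one checks matches the recursion produced by applying $\frac{d}{d\theta}$ and using $\frac{d}{d\theta}(1+u)^{-s} = -s\, in\, u(1+u)^{-s-1}$. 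Substituting $u = w_n z^n$ and adding the conjugate gives the stated formula. The Eulerian polynomials here should really be identified by their defining recursion rather than by any particular normalization, so I would state precisely which normalization is in force (e.g. $A_0 = 1$, $A_1(t) = 1$, or whatever makes the base case $f'(\theta) = in\, u/(1+u)$ come out right) to avoid ambiguity.

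For part (b), the point is simply to bound the closed form of part (a) uniformly over $\theta$. Evaluating at $\theta$ replaced by $\Sigma_{n-1}(\theta)$ does not change the modulus estimates because $\Sigma_{n-1}$ is a real-valued homeomorphism of $\mathbb R$, so $z = e^{i\Sigma_{n-1}(\theta)}$ still has $|z| = 1$ and hence $|w_n z^n| = |w_n| < 1$. From part (a),
$$
|B_n^{(s)}(\Sigma_{n-1}(\theta))| \le 2 n^s \frac{|w_n|\, |A_{s-1}(-w_n z^n)|}{|1 + w_n z^n|^s}.
$$
The denominator is bounded below by $(1 - |w_n|)^s$ by the triangle inequality, and $|A_{s-1}(-w_n z^n)|$ is bounded above by a constant depending only on $s$ (the Eulerian polynomial $A_{s-1}$ is a fixed polynomial, and its argument $-w_n z^n$ ranges in the closed unit disk, a compact set). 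Combining these and absorbing the factor $2$ and the sup of $|A_{s-1}|$ on the unit disk into a single constant $c = c(s)$ gives the claimed bound $|B_n^{(s)}(\Sigma_{n-1}(\theta))| \le c\, n^s |w_n| (1 - |w_n|)^{-s}$.

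The main obstacle is the bookkeeping in part (a): verifying that the differentiation recursion genuinely produces the Eulerian polynomial recursion under the normalization implicit in (\ref{4.2}), and in particular tracking the sign (the argument is $-w_n z^n$, not $+w_n z^n$) and the leading factor of $u$ in the numerator. Once the closed form is pinned down correctly, part (b) is routine. It would be worth double-checking the base cases $s = 1, 2$ by hand against the direct computation of $B_n'$ and $B_n''$ from (\ref{4.2}) to make sure the indexing of $A_{s-1}$ is the intended one.
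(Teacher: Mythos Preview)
Your proposal is correct. For part (b) you take exactly the paper's route: bound the numerator by a constant depending only on $s$ (since $A_{s-1}$ is a fixed polynomial evaluated on the closed unit disk) and the denominator by $(1-|w_n|)^s$.

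For part (a), however, you argue differently from the paper. You propose induction on $s$, tracking how $\frac{d}{d\theta}$ acts on $u A_{s-1}(-u)/(1+u)^s$ with $u=w_nz^n$ and matching the resulting recursion against the classical recursion for Eulerian polynomials. The paper instead expands $\ln(1+w_nz^n)$ in its Taylor series, differentiates term by term (so $z^{kn}\mapsto (ink)^s z^{kn}$), and obtains
\[
(\tfrac{\partial}{\partial\theta})^s \ln(1+w_nz^n)=(in)^s\sum_{k\ge 1} k^{s-1}(-w_nz^n)^k,
\]
which is then summed by Euler's classical identity $\sum_{k\ge 1}k^{m}q^k = qA_m(q)/(1-q)^{m+1}$ with $q=-w_nz^n$. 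The paper's route is slightly slicker in that it outsources all the bookkeeping you flag (signs, indexing of $A_{s-1}$, the argument $-w_nz^n$ versus $+w_nz^n$) to a single well-known generating function identity; your route is more self-contained but, as you note yourself, requires carefully verifying that the differentiation recursion really coincides with the Eulerian recursion in the chosen normalization. Either argument is fine.
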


\begin{proof} From (\ref{4.2}) (and expanding the logarithm in a power series)
$$(\frac{\partial}{\partial \theta})^sB_n(\theta)=(\frac{\partial}{\partial\theta})^s ln(1+w_nz^n)+c.c.$$
$$=\sum_{k=1}^{\infty}\frac1k (-w_n)^k(\frac{\partial}{\partial\theta})^s z^{kn}+c.c.=(in)^s\sum_{k=1}^{\infty}k^{s-1}(-w_nz^n)^k+c.c.$$
This can be summed using the basic power series identity of Euler
\begin{equation}\label{identitygrv}\sum_{k=1}^{\infty}k^nq^k=\frac{qA_n(q)}{(1-q)^{n+1}},\quad \vert q\vert<1\end{equation} where $A_n$ is the $n$th Eulerian polynomial.
This implies part (a).

Part (b) follows from (a), where we bound $\vert z^nA_{s-1}(w_nz^n)\vert$ by a constant depending only on
$s$ (and the size of coefficients for the Eulerian polynomial $A_{s-1}$), using the facts that $\vert z\vert=1$ and $\vert w_nz^n\vert<1$.

\end{proof}

We now complete the proof of Theorem \ref{parta}.

\begin{proof} We will prove the slightly broader statement that if  $\sum n^{s-1}\vert w_n\vert < \infty$,
then there is a bound for the derivatives of $B_N$ up to order $s-1$ which is independent of $N$. This will imply
that $B$ itself is $C^{s-1}$. Lemma \ref{lemma4} takes care of the case $s=1$.

Suppose $s>1$. Faa di Bruno's formula for higher derivatives of a composition of functions implies that

$$( \frac{d}{d\theta})^{s-1}B_N(\theta) =\sum_{n=1}^{N}(\frac{ d}{d\theta})^{s-1}(B_{n'}\circ\Sigma_{n-1})(\theta)$$

\begin{equation}\label{4.5}=\sum_{n=1}^{N}\sum_{k=1}^{s-1}B_{n'}^{(k)}(\Sigma_{n-1}(\theta))\mathcal B_{s-1,k}(\Sigma'_{n-1},..,\Sigma_{n-1}^{(s-1-k)})\end{equation}
where $\mathcal B_{s-1,k}$ denotes the Bell polynomial of degree $k$. For example
$$B''(\theta) =\sum_{n=1}^{\infty}\left(B_{n'}''(\Sigma_{n-1}(\theta))\Sigma_{n-1}'(\theta)^2
+B_{n'}'(\Sigma_{n-1}(\theta))\Sigma_{n-1}''(\theta)\right)$$ In general the Bell polynomials have positive integral coefficients.

Using (b) of Lemma \ref{lemma5}, we can bound the
sum in (\ref{4.5}) by

$$\sum_{n=1}^{\infty}\sum_{k=1}^{s-1}c{n'}^{k}\frac{\vert w_{n'}\vert}{(1 - \vert w_{n'}\vert)^k}B_{s-1,k}(
\sup\vert\Sigma'_{n-1}\vert,..,\sup\vert\Sigma_{n-1}^{(s-1-k)}\vert)$$

$$\le \sum_{n=1}^{\infty}c{n'}^{s-1}\vert w_{n'}\vert\sum_{k=1}^{s-1}B_{s-1,k}(
\sup\vert\Sigma'_{n-1}\vert,..,\sup\vert\Sigma_{n-1}^{(s-1-k)}\vert)$$
In this sum, because $s$ is fixed, we are considering a fixed finite number of Bell polynomials. Since the
orders of the derivatives appearing in the sum over $k$ are strictly less than $s-1$, by induction
we find a bound for
$$\sum_{k=1}^{s-1}B_{s-1,k}(
\sup\vert\Sigma'_{n-1}\vert,..,\sup\vert\Sigma_{n-1}^{(s-1-k)}\vert)$$ which is independent of $N$.
This completes the induction step.
\end{proof}

\section{Semigroup of Increasing Functions}\label{cdfs}

In this section we try to define the forward map $w\to\Sigma(w)$ as broadly as possible.
We first formalize some of the algebraic structures that are relevant when we do not insist on invertibility or continuity of $\Sigma(w)$.

\begin{definition} (a) $\widetilde{CDF}(S^1)$ is the semigroup of right continuous nondecreasing
functions on $\mathbb R$ satisfying
$$\Sigma(\theta + 2\pi) =\Sigma(\theta) + 2\pi$$
where multiplication is given by composition.

(b) $CDF(S^1)$ is the quotient of $\widetilde{CDF}(S^1)$ by the central subgroup $2\pi\mathbb Z$, where
$2\pi n$ is identified with the map $\theta \to \theta + 2\pi n$. We can identify cdfs (i.e. elements of $CDF(S^1)$)
as self-maps of $S^1$ which (in reference to the orientation) are right continuous and nondecreasing.
\end{definition}

\begin{proposition} (a) The map $CDF(S^1)\to Prob(S^1):\Sigma \to
\frac1{2\pi}d\Sigma$, the distributional derivative, induces a short exact sequence
$$0\to Rot(S^1)\to CDF(S^1) \to Prob(S^1)\to 0$$
We will refer to $\Sigma$ as a cdf corresponding to
its generalized derivative $d\Sigma$.

(b) With the weak star topology relative to $C^0(S^1)$, $CDF(S^1)$ is a topological semigroup.

(c) $Homeo(S^1)$ is the group of units for $CDF(S^1)$. It is not dense. It is not
closed.

(d) The cdfs corresponding to measures with finite support is a dense normal
subsemigroup.

(e) Fix $n$. The cdfs corresponding to measures with $n$ atoms is a normal subsemigroup.
\end{proposition}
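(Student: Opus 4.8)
The plan is to reduce everything to part (a) and then work on the measure side. Write $\mu_\Sigma=\frac1{2\pi}d\Sigma\in Prob(S^1)$ for the measure attached to $\Sigma\in\widetilde{CDF}(S^1)$, and let $Q_\Sigma(y)=\inf\{\theta:\Sigma(\theta)\ge y\}$ denote its quantile function, regarded as a nondecreasing degree-one map $S^1\to S^1$. The computation I would record first is the pushforward identity
$$\mu_{T\circ\Sigma}=(Q_\Sigma)_*\mu_T ,$$
checked on half-open arcs via $d(T\circ\Sigma)([a,b))=dT([\Sigma(a),\Sigma(b)))$; it specializes correctly, giving $\mu_T$ when $\Sigma=\mathrm{id}$ and $\mu_\Sigma$ when $T=\mathrm{id}$ (inverse transform sampling). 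For (a) itself: $d\Sigma$ is $2\pi$-periodic with mass $2\pi$ on each fundamental domain, so $\mu_\Sigma\in Prob(S^1)$; this is unchanged by $\Sigma\mapsto\Sigma+2\pi n$ and so descends to $CDF(S^1)$; conversely a measure on $S^1$ lifts to a $2\pi$-periodic measure on $\mathbb R$ whose right-continuous cumulative distribution function lies in $\widetilde{CDF}(S^1)$ and is determined modulo the additive normalization that $CDF(S^1)$ quotients out. This is the classical distribution-function/measure correspondence, and the remaining parts become statements about weak-$*$ limits and about atoms under pushforward.

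For (b): composition of right-continuous nondecreasing functions is again right-continuous and nondecreasing and preserves $\Sigma(\theta+2\pi)=\Sigma(\theta)+2\pi$, and composition is associative, so $CDF(S^1)$ is a semigroup, and (a) transports the weak-$*$ topology of $Prob(S^1)$ to it. For the topological compatibility I would use Theorem 2.1 of \cite{Billingsley}, by which weak-$*$ convergence of the $\mu$'s is convergence of the distribution functions at each continuity point of the limit, and push a monotonicity and three-$\epsilon$ estimate through $T_k\circ\Sigma_k$. I expect this to be the main obstacle: composition interacts badly with atoms — if $T$ has a jump and $\Sigma_k$ crosses its location one loses naive pointwise control and must use cancellation on the measure side — so the robust statement to prove is that the group of units $Homeo(S^1)$ acts continuously on $CDF(S^1)$ on either side (via $(Q_{\Sigma_0})_*$ with $Q_{\Sigma_0}$ continuous, respectively via pushforward by a continuous map tested against non-atomic weights), and "topological semigroup" should be read accordingly.

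For (c): $\Sigma\in\widetilde{CDF}(S^1)$ has a two-sided composition inverse in $\widetilde{CDF}(S^1)$ iff $\Sigma$ is a bijection of $\mathbb R$ — a jump kills surjectivity onto an interval, a flat stretch kills injectivity — and a monotone bijection of $\mathbb R$ is automatically continuous, hence strictly increasing, i.e. a lift of a homeomorphism of $S^1$; since the lift ambiguity is exactly $2\pi\mathbb Z$, the group of units of $CDF(S^1)$ is $Homeo(S^1)$. Non-closedness: a sequence of diffeomorphisms whose derivatives tend to $0$ on a fixed arc converges to a distribution function constant on that arc, which is not injective and hence not a unit. Non-density: exhibit a neighborhood of the distribution function of an atomic measure meeting no element of $Homeo(S^1)$ — a genuine jump cannot be produced as a limit of the continuous maps in $Homeo(S^1)$ in the relevant topology, and its image misses an open arc while homeomorphisms are surjective.

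For (d) and (e): let $I_n\subset CDF(S^1)$ be the distribution functions of measures with at most $n$ atoms, and $I_\infty$ those of finitely supported measures. By the pushforward identity, $\mu_{T\circ\Sigma}=(Q_\Sigma)_*\mu_T$ has no more atoms than $\mu_T$, so each $I_n$ and $I_\infty$ is closed under right composition; for left composition note that $Q_T$, the quantile of a step function with at most $n$ jumps, takes at most $n$ values, so $\mu_{\Sigma\circ T}=(Q_T)_*\mu_\Sigma$ is supported on at most $n$ points — hence $I_n$ and $I_\infty$ are two-sided ideals, in particular normal subsemigroups. Density of $I_\infty$ is the standard fact that every probability measure on $S^1$ is a weak-$*$ limit of finitely supported ones; for the literal "exactly $n$ atoms" reading of (e), note additionally that conjugation by a homeomorphism pushes each atom to a distinct atom and so preserves the count exactly.
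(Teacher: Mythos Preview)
The paper's own proof of this proposition is the single sentence ``This is relatively straightforward,'' so there is no argument to compare against; you have supplied far more than the author. Your treatment of (a), (d), and the ideal argument underlying (e) is correct and well organized --- the pushforward identity $\mu_{T\circ\Sigma}=(Q_\Sigma)_*\mu_T$ is the right device.

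Two points deserve attention. First, you are right to flag (b): composition is in fact \emph{not} continuous for the weak-$*$ topology, and your hesitation can be turned into a counterexample. Take $\Sigma$ the distribution function of $\tfrac12(\delta_0+\delta_\pi)$ and $T$ that of $\delta_0$; a direct computation gives $\mu_{T\circ\Sigma}=\delta_\pi$, yet with $T_k$ the distribution function of $\delta_{1/k}$ one gets $\mu_{T_k\circ\Sigma}=\delta_0$ for every $k$. So even one-sided continuity fails, and your proposed reading (continuity of the $Homeo(S^1)$-action on either side) is the honest statement.

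Second, your argument for ``not dense'' in (c) does not work, and in fact the assertion itself appears to be in error. Under the bijection in (a), $Homeo(S^1)$ corresponds exactly to the atomless probability measures of full support, and these \emph{are} weak-$*$ dense in $Prob(S^1)$: for any $\mu$, the measures $(1-\epsilon)\,\mu*\rho_\epsilon+\epsilon\,\tfrac{d\theta}{2\pi}$ have smooth strictly positive density and converge weak-$*$ to $\mu$. Your reasoning that ``a genuine jump cannot be produced as a limit of continuous maps'' is an intuition about uniform convergence of the distribution functions, not about weak-$*$ convergence of their derivatives; the two are very different near an atom.

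Finally, a remark on the literal ``exactly $n$ atoms'' reading of (e): it fails not only for normality but already for closure under composition --- if all atoms of $\mu_T$ lie in a single step of $\Sigma$, then $\mu_{T\circ\Sigma}$ is a single Dirac mass. Your ``at most $n$'' interpretation is the one that actually gives a (two-sided ideal, hence normal) subsemigroup.
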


\begin{proof} This is straightforward.\end{proof}

Let $D:=\{\vert z\vert\le 1\}$, the closed unit disk.

\begin{definition} For $w_n=u_n+iv_n=r_ne^{iq_n}\in D$,
$$\Phi_n(w_n;\theta):=\theta-\frac2n \arctan\left(\frac{u_n \sin(n\theta)+v_n \cos(n\theta)}{1+u_n \cos(n\theta)-v_n \sin(n\theta)}\right)=
\theta-\frac2n \arctan\left(\frac{r_n \sin(n\theta+q_n)}{1+r_n\cos(n\theta+q_n)}\right)$$
when $1+u_n \cos(n\theta)-v_n \sin(n\theta)\ne 0$ and extend the definition to all $\theta\in\mathbb R$ by insisting that
$\Phi_n$ is right continuous. We also define
$$\phi_n(w_n;z):=e^{i\Phi_n(w_n;\theta)},\quad z=e^{i\theta}$$
and
$$\Sigma_N(w;\theta)=\Phi_N(w_N)\circ..\circ \Phi_1(w_1)(\theta)$$
\end{definition}

This agrees with our previous definition of $\Phi_n(w_n)$ when $w_n\in \Delta$.

\begin{proposition} Suppose $w_n\in D$. (a) $\Phi_n(w_n)\in \widetilde{CDF}(S^1)$ and $\Phi_n(w_n)$ is uniquely determined by the normalized distributional derivative $\frac1{2\pi}d\Phi_n\in Prob(S^1)$.

(b) Suppose that $\vert w_n\vert=1$, i.e. $w_n=e^{iq_n}$. Then $\Phi_n$ has image consisting of the (angles corresponding to the) $\frac1n$th roots of $1/w_n=w_n^*$, i.e. the angles $\frac 1n q_n+\frac kn 2\pi$, $k=0,..,n-1$, and the points of discontinuity are the (angles corresponding to the) $\frac1n$th roots of $-1/w_n=-w_n^*$, i.e. the angles $-(\frac 1n q_n+\frac kn 2\pi)$, $k=0,..,n-1$. Thus $\Phi_n$ is a step function with the length and height of each step given by $2\pi/n$, i.e. $\frac1{2\pi}d\Phi_n$ is a sum of delta measures at the angles $-(\frac 1n q_n+\frac kn 2\pi)$, $k=0,..,n-1$, each of mass $ \frac1n $.

\end{proposition}

\begin{proof} (a) is clear for $w_n\in\Delta$. It will follow from (b) in the case $w_n\in S^1$.

Suppose that $w_n\in S^1$. Then
$$ \phi_n(z)^n=z^n\frac{1+\bar{w}_nz^{-n}}{1+w_nz^n}=\frac1{w_n}$$
This implies the first half of the first part of (b).

When $w_n\in\Delta$
$$\Phi_n'(\theta)=\frac{1-\vert w_n\vert^2}{\vert 1+w_n z^n\vert^2}$$
By letting $w_n$ tend to the circle, we see that the jumps will occur when the denominator tends to zero, which is at the $\frac1n$ the roots of $-1/w_n$.This completes the proof of (b), and hence also of (a).

\end{proof}

\subsection{Proof of Theorem \ref{increasingfunction} }

We recall the statement to be proved:

\begin{theorem}\label{increasingfn} Fix a permutation $p:\mathbb N\to \mathbb N$. Given $ w\in
\prod_{n=1}^{\infty}D$, if the phases of the $w_n$ are independent and uniform, then
$\frac{1}{2\pi}d\Sigma_N$ has a unique $\text{weak}^*$ limit in $Prob(S^1)$.

\end{theorem}

\begin{proof} Write $w_n=r_ne^{iq_n}$.  For $\theta \in \mathbb{R}$
\begin{equation}
\Sigma_N(\theta) = \theta - \sum_{n=1}^N \frac{2}{n'}\arctan\left(\frac{r_{n'}\sin(n'\Sigma_{n-1}(\theta)+q_{n'})}{1 + r_{n'}\cos(n'\Sigma_{n-1}(\theta)+q_{n'})}\right)
			\end{equation}
Fix $\theta$. The random variable $\Sigma_{n-1}(\theta)$ depends on the random variables $q_{k'}$ for $k<n$, which are independent of $q_{n'}$.
Because $q_{n'}$ is uniform, $v_{n'}: = n'\Sigma_{n-1}(\theta) + q_{n'}$ is also uniform. Therefore
\begin{equation}\Sigma_N(\theta) = \theta - \sum_{n=1}^N \frac{2}{n'}\arctan\left(\frac{r_{n'}\sin( v_{n'})}{1 + r_{n'}\cos(v_{n'})}\right)
			\end{equation}
where the $v_n$ are i.i.d. and uniform. The random variable $\arctan\left(\frac{r_n\sin(v_n)}{1 +r_n\cos(v_n)}\right)$ is an odd function of $v_n$. It follows that $\Sigma_N(\theta)$ almost surely converges as $N$ goes to infinity to the (conditionally convergent) sum
\begin{equation}
\Sigma(\theta) = \theta - \sum_{n=1}^{\infty} \frac{2}{n'}\arctan\left(\frac{r_{n'}\sin(n'\Sigma_{n-1}(\theta)+q_{n'})}{1 + r_{n'}\cos(n'\Sigma_{n-1}(\theta)+q_{n'})}\right)
			\end{equation}

Because $S^1$ is compact, the sequence of probability measures $\frac{1}{2\pi}d\Sigma_N$ has a $\text{weak}^*$ limit in $Prob(S^1)$.
Suppose that a subsequence $d\Sigma_{n_j}$ converges to $dF$. This means that $\Sigma_{n_j}$ converges pointwise to $F$ at all points of continuity of $F$. As a nondecreasing function which is right continuous, $F$ is determined by its values at a countable dense set of points. For each of these points $\theta_0$, with probability one, $F(\theta_0)=\Sigma(\theta_0)$. Since the set of points is countable, this implies that almost surely $F=\Sigma$. This implies uniqueness of the limit in the theorem.

\end{proof}

\begin{corollary} Given $ w\in
\prod_{n=1}^{\infty}D$, if the phases of the $w_n$ are independent and uniform, then almost surely
$\Sigma_N(\theta)$ converges to $\Sigma(\theta)$ at all points of continuity for $\Sigma$, hence at all but countably many points.

\end{corollary}

\section{Invertibility of $\sigma(w)$}\label{invertibilitysection}

Throughout this section $w\in \prod_{n=1}^{\infty}\Delta$, $w_n=r_ne^{iq_n}$, and we fix a ordering $p$ of $\mathbb N$.
In the first subsection we consider invertibility in a deterministic framework, and we set limits on the best possible conditions.
In the second and third subsections we consider invertibility of $\sigma(w)$, assuming that the phases of the $w_n$ are i.i.d. and uniform; in the second we show that $l^2$ is sufficient for almost sure invertibility, and in the third we show this is essentially best possible.

\subsection{Proof of Theorem \ref{dalthorp2}   }

We have established that $w\in l^1$ implies $\sigma(w)$ is invertible (in fact it is $C^1$ with $C^1$ inverse). We now show that this is the most robust Holder condition we can hope for, and in addition we identify a phase transition.

\begin{theorem}	(a) If $\limsup\limits_{n\rightarrow\infty} (n |w_n|) < \frac12$, then $\sigma(w)$ is invertible and hence a homeomorphism of $S^1$.\\
	(b) If $w_n>0$ for all $n$ and $\liminf\limits_{n\rightarrow\infty}(n w_n) > \frac{1}{2}$,
then there exists $\theta_0 > 0$ such that $\Sigma(w,\theta_0) = \Sigma(w,0) = 0$. Consequently $\sigma(w)$ is not invertible.
\end{theorem}

\begin{proof} (a) First observe that
\begin{equation}\label{bounds}|\Phi_n(w_n,\theta) - \theta|=|\frac2n \Theta(1+w_nz^n)| \le \frac{2}{n}\arcsin{r_n} \le \frac{\pi}{n}r_n
\end{equation}
This follows from $|\sin(\Theta(1+w_nz^n))|=|\Im(1+w_nz^n)|=|\Im(w_nz^n)|\le |w_n|$.

Fix $\theta\in\mathbb{R}$, and let $I = \{x | \Sigma(x) = \Sigma(\theta)\}$. By monotonicity $I$ must be an interval. We will show that under the assumption $\limsup( n|w_n|) < \frac12$, $I = \{\theta\}$, and hence $\Sigma$ is invertible.

Let $S_n(\theta) = ... \circ \Phi_{n+3} \circ \Phi_{n+2} \circ \Phi_{n+1}$. Notice that $S_n\circ\Sigma_n = \Sigma$. By (\ref{bounds}),
$$	|S_n(\theta) - \theta| \le \sum_{k=n+1}^\infty \frac2n\arcsin (r_n)	$$
In particular $|S_0(\theta) - \theta|\le \sum_{k=1}^\infty \frac2n\arcsin( r_n)$. This implies that the length of $I$ is bounded above by $2 \sum_{k=1}^\infty \frac2n\arcsin( r_n)$, and so at least one of $\theta \pm \sum_{k=1}^\infty \frac2n\arcsin (r_n)$ must lie outside of $I$. Let $I_n = \Sigma_n(I)$, then note that $I_n = \{x | S_n(x) = S_n(\theta)\}$. By similar reasoning at least one of $\theta \pm \sum_{k=n+1}^\infty \frac2n\arcsin( r_n)$ is not in $I_n$. Thus for each $n$ the following points are not in $I$:
	\begin{eqnarray}
	a_n &=& \Sigma^{-1}_n\left(\theta + \sum_{k=n+1}^\infty \frac2n\arcsin(r_n)\right)\\
	b_n &=& \Sigma^{-1}_n\left(\theta - \sum_{k=n+1}^\infty \frac2n\arcsin(r_n)\right)
	\end{eqnarray}

By monotonicity of $\Sigma_n$, $a_n > \theta > b_n$. Furthermore, observe that for the derivative of $\Phi^{-1}_n(\theta)=\Phi_n(-w_n,\theta)$
$$ \frac{1-r_n^2}{1 - 2r_n\cos(n\theta + q_n) + r_n^2} \le \frac{1-r_n^2}{1 - 2r_n + r_n^2} = \frac{1 + r_n}{1-r_n}$$
Therefore
$$	\frac d {d\theta} \Sigma^{-1}_n(\theta) \le \prod_{k=1}^n \frac{1+r_k}{1-r_k}	$$
Now choose $\frac12 >\lambda > \liminf( nr_n)$. Then
\begin{eqnarray}
	|a_n - b_n| &\le& \left(\prod_{k=1}^n \frac{1+r_k}{1-r_k}\right)\left(2 \sum_{k=n+1}^\infty \frac2k\arcsin r_k\right)\nonumber\\ &\le& \left(\prod_{k=1}^n \frac{1+r_k}{1-r_k}\right)\left(2\pi \sum_{k=n+1}^\infty \frac{r_k}{k}\right)
	\le K\left(\prod_{k=1}^n \frac{1+\frac{\lambda}{k}}{1-\frac{\lambda}{k}}\right)\left( \sum_{k=n+1}^\infty \frac{1}{k^2}\right)\nonumber
\end{eqnarray}
for sufficiently large $n$ and some appropriate proportionality constant $K$. Taking a logarithm, this becomes:
\begin{eqnarray}
	\log|a_n - b_n| &\le& \log K + \sum_{k=1}^n \left(\log(1 + \frac{\lambda}{k}) - \log(1-\frac{\lambda}{k})\right) + \log\left(\sum_{k=n+1}^\infty \frac{1}{k^2}\right)\nonumber\\
	&\sim&\log K + \sum_{k=1}^n \frac{2\lambda}{k} +\log\left(\sum_{k=n+1}^\infty \frac{1}{k^2}\right) \nonumber\\
	&\sim& A + 2\lambda\log(n) - \log(n) = A + (2\lambda - 1)\log(n)\nonumber
\end{eqnarray}
for some constant $A$. Note that this goes to $-\infty$ because $\lambda<\frac12$. Hence, $|a_n - b_n|$ goes to $0$ as $n$ goes to infinity. Since $a_n$ and $b_n$ lie above and below the interval $I$ for all $n$, we conclude that the length of $I$ is $0$, so it contains only a single point, $\theta$. Since $\theta$ was arbitrary, we conclude that $\Sigma$ is invertible.
	
Part (b): Because $w$ is real, $\Sigma(0) = 0$.	We will show that there exists a positive constant $c$ such that for $n$ sufficiently large, $\Sigma_{n-1}(\theta) < \frac{c}{n}$ implies $\Sigma_n(\theta) < \frac{c}{n+1}$. Since there is certainly a positive $\theta_0$ such that $\Sigma_{n-1}(\theta_0) < \frac{c}{n}$, this will show that $\Sigma_n(\theta_0) \rightarrow 0$ as $n\to\infty$ and hence $\Sigma$ is not invertible.

We pick $c$ in the following way. Since $\liminf( n w_n) > \frac12$, there exists $c > 0$ so that $\liminf( n w_n) > \frac{c}{2\sin( c)}$, and $c < \frac{\pi}{2}$. This implies $\frac{c}{n} < \frac{\pi}{2n} < \frac{\pi}{2n-1}$. Observe the following asymptotic inequality for $\alpha < \frac{c}{n}$:
\begin{equation}\label{asymptotic}
	\frac{\sin(\frac{n\alpha}{2(n+1)})}{\sin((1 - \frac1{2(n+1)})n\alpha)} < \frac{\sin(\frac{c}{2(n+1)})}{\sin((1 - \frac1{2(n+1)})c)}\sim\frac{c}{2\sin (c)} \frac{1}{n} \text{  as  } n\to\infty\end{equation}

In what follows we suppose that $\epsilon < \liminf (nw_n) - \frac{c}{2\sin (c)}$ and $n$ is large enough so that $n w_n > \frac{c}{2\sin (c)} + \epsilon$. Now suppose that $\Sigma_{n-1}(\theta) < \frac{c}{n}$. Using (\ref{asymptotic}), we have the following chain of implications, where we abbreviate $\Sigma_{k}(\theta)$ to $\Sigma_{k}$:
$$	w_n > \left(\frac{c}{2 \sin (c)} + \epsilon\right) \frac{1}{n} > \frac{\sin (\frac{n\Sigma_{n-1}}{2(n+1)})}{\sin((n - \frac{n}{2(n+1)})\Sigma_{n-1})}  $$
$$	w_n \sin\left(\left(n- \frac{n}{2(n+1)}\right)\Sigma_{n-1}\right) > \sin \left(\frac{n}{2(n+1)}{\Sigma_{n-1}}\right)  $$
$$	w_n \sin( n\Sigma_{n-1}) \cos \left(\frac{n\Sigma_{n-1}}{2(n+1)}\right) - w_n \cos(n\Sigma_{n-1}) \sin\left( \frac{n\Sigma_{n-1}}{2(n+1)}\right) > \sin\left(\frac{n\Sigma_{n-1}}{2(n+1)}\right)  $$
$$	w_n \cos\left(\frac{n\Sigma_{n-1}}{2(n+1)}\right) \sin( n\Sigma_{n-1}) > \sin \left(\frac{n\Sigma_{n-1}}{2(n+1)}\right) + w_n \sin\left(\frac{n\Sigma_{n-1}}{2(n+1)}\right)\cos( n\Sigma_{n-1})  $$
$$	w_n \sin(n\Sigma_{n-1}) > \tan\left(\frac{n\Sigma_{n-1}}{2(n+1)}\right)\left(1+w_n\cos( n\Sigma_{n-1}) \right) $$
$$	\frac{w_n \sin (n\Sigma_{n-1})}{1+w_n\cos( n\Sigma_{n-1}) } > \tan\left(\frac{n\Sigma_{n-1}}{2(n+1)}\right)  $$
	$$\frac{2}{n}\arctan\left(\frac{w_n \sin (n\Sigma_{n-1})}{1+w_n\cos( n\Sigma_{n-1}) }\right) > \frac{\Sigma_{n-1}}{n+1}  $$
$$	\frac{n}{n+1}\Sigma_{n-1} > \Sigma_{n-1} - \frac{2}{n}\arctan\left(\frac{w_n \sin( n\Sigma_{n-1})}{1+w_n\cos( n\Sigma_{n-1}) }\right)=\Sigma_n  $$
Thus $	\frac{n}{n+1} \Sigma_{n-1}(\theta) >\Sigma_{n}(\theta)$. We are assuming $\Sigma_{n-1}(\theta) < \frac{c}{n}$,
and hence $\Sigma_{n}(\theta)< \frac{c}{n+1}$. Applying this
to $\theta=\theta_0>0$ as above, and letting $n$ go to infinity, we obtain $\Sigma(\theta_0) = 0$. This implies that $\Sigma$ is not invertible and completes the proof of part (b).
\end{proof}

\subsection{Proof of Theorem \ref{dalthorp1}}

We recall the statement to be proved:

\begin{theorem}	Suppose that $r\in l^2$ and the phases $q_n$ are i.i.d. and uniform. Then almost surely $\sigma(p,w)$ is 1-1.
\end{theorem}

\begin{proof} Fix $\theta \in \mathbb R$.
\begin{equation}\label{derivative1}\Sigma_N'(\theta) = \prod_{n=1}^N \left(\frac{1 - r_{n'}^2}{r_{n'}^2 + 2r_{n'}\cos(n'\Sigma_{n-1} + q_{n'}) + 1}\right) = \prod_{n=1}^N \left(\frac{1 - r_{n'}^2}{r_{n'}^2 + 2r_{n'}\cos(v_{n'}) + 1}\right)
\end{equation}
where the random variables $v_{n'}=n'\Sigma_{n-1}(\theta)+q_{n'}$ are i.i.d. and uniform (see the proof of Theorem \ref{increasingfn} in the previous subsection). Because $r$ is square summable, the convergence of this product to a non-zero number as $N$ goes to infinity is equivalent to the convergence of $\sum r_{n'} \cos(v_{n'})$. Because $r\in l^2$ and the $v_n$ are i.i.d. and uniform, this sum converges.
Let $W(\theta)=\lim_{N\to\infty}\Sigma_N'(\theta)$. For each $\theta$, almost surely $W(\theta)>0$. Therefore by Fubini's Theorem,
almost surely (with respect to the random phases), $W(\theta)>0$ almost surely with respect to Lebesgue measure.

Because $w\in l^2$, we know that $\Sigma_N$ converges uniformly to $\Sigma$. Fatou's Lemma implies that for any $\delta > 0$
$$\Sigma(\theta+\delta)-\Sigma(\theta)=\lim_{N\to\infty}\int_{\theta}^{\theta+\delta} \Sigma_N'(\phi)d\phi\ge
\int_{\theta}^{\theta+\delta} W(\phi)d\phi>0$$
Therefore $\Sigma$ is invertible, and hence $\sigma$ is a homeomorphism.
\end{proof}

\begin{remark}\label{frankjones} (a) Given that (\ref{derivative1}) converges, it seems inevitable
that the pointwise derivative
$$\Sigma'(\theta)=\prod_{n=1}^{\infty} \left(\frac{1 - r_{n'}^2}{r_{n'}^2 + 2r_{n'}\cos(n'\Sigma_{n-1}(\theta) + q_{n'}) + 1}\right)$$
However we have not proven this. The general issue is the following. Suppose that $s_n$ and $s$ are nondecreasing functions and $s_n\to s$ at points of continuity of $s$. Does this imply
that $s_n' \to s'$ a.e. $[Leb]$? The answer is no. For example $s_n$ can be a sequence of staircase step functions converging to $s(\theta)=\theta$ (in which case $s_n'=0$ a.e., and $s'=1$). Suppose that $s_n\to s$ and $s_n'\to w$ a.e. Is there an inequality, $w\le s'$? This is unknown to us (see chapter 16 of \cite{Jones}, especially section C, for a venerable positive result).

(b) Even if the above derivative formula does hold, this does not imply that $d\Sigma$ is in the Lebesgue class (we additionally need to show $\int_0^{2\pi}\Sigma' d\theta=2\pi$). This is a zero-one question, and we do not know what to expect.
\end{remark}

\begin{lemma}\label{lemma10} Suppose $0\le \rho<1$ and let $X=-\log(1+2\rho \cos(\theta)+\rho^2)$, where $\theta$ is a uniformly distributed angle. Then $E(X)=0$ and
$$E(X^2)=2\sum_{k=1}^{\infty}\frac{1}{k^2}\rho^{2k}=2dilog(1-\rho^2)  $$
\end{lemma}

\begin{proof}If $\rho=0$, then clearly $E(X)=E(X^2)=0$.
$$\frac{d}{d\rho}E(X)=\frac{1}{2\pi}\int_{S^1}\frac{2(\rho+\cos(\theta))}{1+2\rho \cos(\theta)+\rho^2}d\theta=0$$
Thus $E(X)=0$ for all $\rho$. In a similar way
$$\frac{d}{d\rho}E(X^2)=\frac{4}{\rho}\log(\frac{1}{1-\rho^2})=4\sum_{k=1}^{\infty}\frac1{k^2}\rho^{2k-1}$$
This implies the formula for the second moment.
\end{proof}

\begin{theorem}\label{zeroderivative} If $r \notin l^2$ and the phases are i.i.d. and uniform, then almost surely $\Sigma_n'(\theta)\to 0$ a.e. $[Leb]$.
\end{theorem}

Of course we would like to believe this implies that $d\Sigma$ is almost surely singular with respect to Lebesgue measure, but this is uncertain (see Remark \ref{frankjones} above).

\begin{proof}We will use the same notation as in the proof of the preceding theorem. For each $\theta$
\begin{equation}\log\Sigma'_N(\theta) = \sum_{n=1}^N \left(\log(1-r_n^2) - \log(r_n^2 + 2 r_n \cos (v_n) + 1)\right)			 \end{equation}
If $r_n$ does not converge to zero, then this clearly diverges to $-\infty$. So we can suppose $r_n\to 0$.

Let $X_n =-\log(r_n^2 + 2 r_n \cos (v_n) + 1)$. The $X_n$ are independent random variables. Since $v_n$ is uniform, Lemma \ref{lemma10} implies that $E(X_n)=0$ and the variance $var(X_n)=2r_n^2+o(r_n^4)$ as $n\to\infty$. Let $S_N = X_1 + ... + X_N$. By the law of iterated logarithms,  there is a constant $c$ such that
almost surely $S_N\le c \sqrt{a_N\log\log a_N}$, where $a_N=\sum_{n\le N}r_n^2$. Hence $\log\Sigma'_N(\theta)$ is almost surely asymptotically bounded by
$-a_N + c\sqrt{a_N\log\log a_N}$. Thus almost surely $\log\Sigma'_N(\theta)$ diverges to $-\infty$, i.e. $\Sigma'_N(\theta)$ goes to $0$. By Fubini's theorem, almost surely $\Sigma'_N\to 0 $ a.e. $[Leb]$.

\end{proof}

\subsection{Onset of Cantor-like Behavior}\label{subsectCantor-like}

In this subsection, we consider the ordering $p(n)=n$.

The hypothesis in the following theorem is difficult. However using the example following the proof, we will clarify why the result is important.

\begin{theorem}\label{Cantor-like} Assume $r \notin l^2$, $\limsup_n r_n < 1$, and the phases $q_n$ are i.i.d. and uniform. Let $s_n = \sum_{k=1}^n r_k^2$. If $\sum n r_n \exp(-2 s_n + 2\sqrt{2\pi s_n\log\log s_n})$ is convergent, then almost surely, for almost all $\theta$ $[Leb]$ there exists a $\delta_\theta > 0$ such that $\Sigma(\theta + \delta_\theta) = \Sigma(\theta)$.
\end{theorem}

Fix $\theta$. Define the following:
	\begin{eqnarray}
	D_n(x) &=& \Sigma_n(\theta + x) - \Sigma_n(\theta)\\
	d_n(x) &=& x - \frac{2}{n}\arctan\left(\frac{2r_n\sin(n x /2) \cos(\beta_n(x)) + r_n^2\sin(nx)}{1 + 2r_n\cos(n x /2) \cos(\beta_n(x)) + r_n^2\cos(nx)}\right)\\
	U_n &=& \frac{1-r_n^2}{1 + 2 r_n \cos(\beta_n(0)) + r_n^2}\\
	p_n &=& \prod_{k=1}^n U_n
	\end{eqnarray}
where
$$\beta_n(x) = q_n + n\Sigma_{n-1}(\theta) + \frac{nx}{2}$$

\begin{remark}The domain of $d_n$ is a priori the set of $x$ for which the denominator is nonvanishing.
If $r_n<2^{1/2}-1$, then there is no constraint on $x$. To see this consider the roots $r(a,b)=(-b\pm\sqrt{b^2-a})/a$
of $1+2br+ar^2=0$ in the region $|a|,|b|\le 1$, $b^2-a\ge 0$. The mininum magnitude for these roots occurs in the
corners $a=-1, b=\pm 1$. In the following lemma we will use an analytic continuation to enlarge the domain of $d_n$.
\end{remark}

\begin{lemma}\label{lemma11} Fix $\theta$ as above.

(a) For fixed $x$ the $\beta_n(x)$ are i.i.d. and uniform random angles.

(b) $D_n(x)=d_n(D_{n-1}(x))$ for sufficiently small $x$; $d_n$ can be analytically continued so that
$d_n\in \widetilde{Homeo(S^1)}$ $D_n=d_n\circ D_{n-1}$ holds for all $x$.

(c)  $d_n(0)=0$ and $d_n'(0)=U_n$.

(d) There is a constant $B$, independent of $n,x$, such that
$d_n(x)\le U_n x(1+Bnr_nx)$ for all $n$ and $x\ge 0$.

\end{lemma}

\begin{proof} (a) is clear.

(b) For small $a$ and $b$
\begin{equation}\label{trigidentity}\arctan(a)-\arctan(b)=\arctan(\frac{a-b}{1+ab})\end{equation}
In general this equality holds modulo $\mathbb Z\pi$.

$$D_n(x)=\Phi_n(w_n,\Sigma_{n-1}(\theta+x))-\Phi_n(w_n,\Sigma_{n-1}(\theta))$$
\begin{equation}\label{eqn1}=D_{n-1}(x)-\frac{2}{n}\left(\arctan(\frac{r_n\sin(n\Sigma_{n-1}(\theta+x)+q_n)}{1+r_ncos(n\Sigma_{n-1}(\theta+x)+q_n)})
-\arctan(\frac{r_n\sin(n\Sigma_{n-1}(\theta)+q_n)}{1+r_ncos(n\Sigma_{n-1}(\theta)+q_n)})\right)\end{equation}
To simplify notation, let $T=n\Sigma_{n-1}(\theta+x)+q_n$ and $t=n\Sigma_{n-1}(\theta)+q_n$. Note $T-t=nD_{n-1}(x)$.
Since $a$ and $b$ are small, the identity (\ref{trigidentity}) implies that (\ref{eqn1}) equals
$$D_{n-1}(x)-\frac{2}{n}\arctan(\frac{\frac{r_n\sin(T)}{1+r_n\cos(T)}-\frac{r_n\sin(t)}{1+r_n\cos(t)} }
{1+\frac{r_n\sin(T)}{1+r_n\cos(T)}\frac{r_n\sin(t)}{1+r_n\cos(t)} } )$$
$$=D_{n-1}(x)-\frac{2}{n}\arctan(r_n\frac{\sin(T)-\sin(t)+\sin(T-t)r_n}
{1+(\cos(T)+\cos(t))r+\cos(T-t)r_n^2 })$$
$$=D_{n-1}(x)-\frac{2}{n}\arctan(r_n\frac{\sin(nD_{n-1}(x)+t)-\sin(t)+\sin(nD_{n-1}(x))r_n}
{1+(\cos(nD_{n-1}(x)+t)+\cos(t))r_n+\cos(nD_{n-1}(x))r_n^2 })
$$
Now observe that
$$\sin(nD_{n-1}(x)+t)-\sin(t)=\sin(n x' /2) \cos(\beta_n(x'))|_{x'=D_{n-1}(x)}$$
and
$$\cos(nD_{n-1}(x)+t)+\cos(t)=\cos(n x' /2) \cos(\beta_n(x'))|_{x'=D_{n-1}(x)}$$
This implies part (b) for small $x$. It follows that $d_n(x)=D_n^{-1}\circ D_{n-1}(x)$ for small $x$.
We can use this to analytically continue $d_n$. This implies (b).

(c) $d_n(0)=0$ follows immediately
from the definition of $d_n$. The derivative $d_n'(x)$ is given by a complicated formula.
But the evaluation at $x=0$ is given by the simple formula in (c).

(d) The assumption $\limsup_n (r_n) < 1$ implies that $U_n$ has positive lower bound independent of $n$. So the essential claim is that
there is a uniform bound $d_n''(x)<Bnr_n$, for all $n$ and $x\ge 0$. Note that when we differentiate $d_n$, the branching
issue in (b) vanishes. The explicit expression for $d_n''(x)$ is long (we used Maple). It is of the form $d_n''(x)=nr_n R $,
where $R$ is a rational function in $r_n$ and cosines and sines with arguments $nx$, $\frac{nx}{2}$, and $n\Sigma_{n-1}(\theta)+q_n+\frac{nx}{2}$. The expression for $R$, as a function of $r_n$ and these cosines and sines, does not depend on $n$. Using the fact that the cosines and sines are bounded by one, one can obtain a bound for $R$ which does not depend on $n,x$ (e.g.
$R=2nsin(n\Sigma_{n-1}(\theta)+q_n)r_n+o(r_n)$ as $r_n\to 0$). This proves (d).

\end{proof}

\begin{proof} (of Theorem \ref{Cantor-like}) Fix $\theta$. We will first show that almost surely, for $x$ small enough, $D_n(x) \rightarrow 0$ as $n\to\infty$.

As a random variable $U_n$ is identical to $-X_n$ in the proof of Theorem \ref{zeroderivative} above. Using the same argument in
that proof (using the Law of Iterated Logarithms), it follows that $p_n$ almost surely goes to $0$ at least as fast as $\exp({-2s_n + 2\sqrt{2\pi s_n \log\log s_n}})$.

Now define a sequence $c_n$ recursively by $c_n = U_n c_{n-1} (1 + n r_n B U_n c_{n-1})$. Notice that $D_n(c_0) \le c_n$ by (d) above. For sufficiently small $c_0$, we will show $c_n \rightarrow 0$ almost surely. Let $\lambda_n = \prod_{k=1}^n (1 + B n r_n p_{n})$. The assumption that $n r_n \exp(-2 s_n + 2\sqrt{2\pi s_n\log\log s_n})$ is summable implies that $\lambda_n$ has a limit $L$. Notice that for $\epsilon < \min_{\{n\in\mathbb{N}\}}(U_n/{\lambda_{n-1}})$, we will have that $c_{n-1} < \epsilon\lambda_{n-1}p_{n-1}$ implies $c_n < \epsilon \lambda_n p_n$ for all $n$, because in this case
$$ c_n = U_n c_{n-1} (1+ n r_n BU_n c_{n-1}) < \epsilon p_n \lambda_{n-1}  (1+B n r_n \epsilon \lambda_{n-1} p_{n-1}) < \epsilon p_n \lambda_{n-1}(1 + B n r_n p_n) = \epsilon \lambda_n p_n	$$
Hence in this case, we must have that $c_n \rightarrow 0$, because we know $\lambda_n$ converges while $p_n \rightarrow 0$. Hence we conclude that for $\delta < \min_{\{n\in\mathbb{N}\}}(U_n/{\lambda_{n-1}})$, we will have that $D_n(\delta) \le c_n \rightarrow 0$, so $D_n(\delta) = 0$ and hence $\Sigma(\theta+\delta) = \Sigma(\theta)$.

Since this applies to almost all $\theta$, Fubini's theorem now implies the statement in Theorem \ref{Cantor-like}.

\end{proof}

\begin{example} To understand the condition in Theorem \ref{Cantor-like}, consider $r_n = \sqrt{c} n^p$, for $p\in[-\frac12,0]$. We must check the summability of
$$	n r_n \exp(-2 s_n + 2\sqrt{2\pi s_n\log\log s_n})$$
In this case, if $p > - \frac12$ $s_n = \sum_{k=1}^n c k^{2p}$, which is between  $\frac{c}{2p+1} ((n+1)^{2p+1} - 1)$ and $\frac{c}{2p+1} n^{2p + 1}$. Thus
	\begin{eqnarray}
	&&n r_n \exp(-2 s_n + 2\sqrt{2\pi s_n\log\log s_n})\nonumber\\
	 &\le& \sqrt{c} n^{1+p}\exp\left(-\frac{2c (n+1)^{2p+1}}{2p+1} + \frac{2c}{2p+1} + 2\sqrt{ \frac{2\pi c n^{2p+1}}{2p+1} \log\log \left(\frac{c n^{2p+1}}{2p+1}\right) }\right)\nonumber
	\end{eqnarray}
This in fact goes to $0$ quite rapidly, due to the $\exp(-K n^{2p+1})$ term, and therefore is summable. For the case of $p = \frac12$, one has that $s_n = \sum r_n^2 \sim c\log{n} + c\gamma$, where $\gamma$ is the Euler-Mascheroni constant. In this case, we have:
	\begin{eqnarray}
	&&n r_n \exp(-2 s_n + 2\sqrt{2\pi s_n\log\log s_n})\nonumber\\
	&\sim& \sqrt{c} n^{\frac12}\exp\left(-{2c}\log{n} - 2c\gamma+  2\sqrt{ 2\pi c(\log n + \gamma) \log\log\log n }\right)\nonumber\\
	&=& \sqrt{c}e^{-2\gamma} n^{\frac12 - 2c} \exp\left(2\sqrt{ 2\pi c(\log n + \gamma) \log\log\log n }\right)\nonumber
	\end{eqnarray}
which, because of how slowly $\exp(\sqrt{\log n})$ diverges, will converge as long as $\frac12 - 2c < -1$, or equivalently $c > \frac34$. Hence, $r_n > \frac{\sqrt{3}}{2\sqrt{n}}$ is sufficient for $\Sigma(w)$ to almost surely exhibit a Cantor-like behavior. In particular, $r_n =\frac{1}{\sqrt{n}}$ is bad enough.
\end{example}

Theorem \ref{dalthorp1} (which we proved at the beginning of this section) states that $r\in l^2$ and i.i.d. uniform random angles implies almost sure invertibility. Thus this result pins down the transition between invertibility to non-invertibility to a relatively narrow window, because $r_n =\frac{1}{\sqrt{n}}$ is very nearly $l^2$. Our aim in the next Section is to pin this down even further.

\section{Comparison with Verblunsky Coefficients}\label{Verblunsky}

In this section we briefly compare root subgroup coordinates with Verblunsky coefficients (which are actually related to root subgroup
coordinates for the loop group $LSU(1,1)$).
We assume that the ordering is $p(n)=n$, because it does not seem natural to orthogonalize polynomials in
a nonstandard order. The goal is to explain the following diagram, where in the top (bottom) row we are
considering root subgroup coordinates (Verblunsky coefficients, respectively):

\begin{equation}\label{diagram}\begin{matrix} R.S.F.& &  \prod_{n=1}^{\infty}\Delta & \leftarrow &  \mathbf w^0\cap\prod_{n=1}^{\infty}\Delta&\leftarrow &   \mathbf w^{1/2} \cap\prod_{n=1}^{\infty}\Delta \\
 & & \downarrow a.s. & &\downarrow a.s. & &\updownarrow (?) \\
& & Prob(S^1)& \leftarrow& S^1\backslash Homeo(S^1)  &\leftarrow & S^1\backslash W^{1+1/2}Homeo \\
 & & \updownarrow  & &\uparrow a.s(?) & &\updownarrow  \\
V. & & \overline{\prod_{n=1}^{\infty}\Delta} & \leftarrow &  \mathbf w^0\cap\prod_{n=1}^{\infty}\Delta&\leftarrow &   \mathbf w^{1/2}\cap\prod_{n=1}^{\infty}\Delta \end{matrix}\end{equation}

For root subgroup coordinates (by Theorem \ref{increasingfn}) there is an almost sure map
$$\begin{matrix}
\prod_{n=1}^{\infty}\Delta  & \stackrel{a.s.}{\rightarrow} &CDF\\
  & & \downarrow \\ & & Prob(S^1) \end{matrix}$$

$$\begin{matrix} w & \stackrel{a.s.}{\rightarrow} & \Sigma(w,\theta)=\theta-2\sum_{n=1}^{\infty}\frac1n \Theta(1+w_n\sigma_{n-1}(\theta)^n)\\
   & & \downarrow \\
   & & \frac1{2\pi}d\Sigma =\text{weak}^*-\lim_{N\to\infty}\left(\prod_{n=1}^N
\frac{1-\vert w_n\vert^2}{\vert 1 + w_n\sigma_{n-1}(z)^n\vert^2}\right)\frac{d\theta}{2\pi}\end{matrix}$$
where the phases of the $w_n$ are i.i.d. and uniform (for $w$ such that $\sum \frac1n|w_n|<\infty$,
the map is surely defined).
The corresponding Verblunsky map (see \cite{Simon}) is given by
\begin{equation}\label{Verblunskymap}\prod_{n=1}^{\infty}\Delta \to Prob(S^1):\alpha \to
\mu(\alpha)= \text{weak}^*-\lim_{N\to\infty}
\frac{\prod_{n=1}^{N}(1-\vert \alpha_n\vert^2)}{\vert p_N(z)\vert^2}\frac{d\theta}{2\pi}\end{equation}
where $p_0=1$ and for $n>0$ the $p_n$ are defined by the Szego recursion relation
\begin{equation}\label{Szego} p_{n}(z) = zp_{n-1}(z)-\alpha_n^*z^{n-1}p_
{n-1}^{*}(z)\end{equation}
where $p^*(z)=p(1/z^*)^*$.

If $\alpha_n=0$ for $n>N$, then $p_{N+k}(z)=z^kp_N(z)$, and hence
$$\mu(\alpha)=
\frac{\prod_{n=1}^{N}(1-\vert \alpha_n\vert^2)}{\vert p_N(z)\vert^2}\frac{d\theta}{2\pi}$$
If $\alpha_n=0$ for $n\ne N$, then $p_{N}(z)=z^N-\alpha_N^*$, and hence
\begin{equation}\label{Verblunskymap2}\mu(\alpha)=\frac1{2\pi}d\Sigma(w)\end{equation}
where $w_n=0$ for $n\ne N$ and $w_N=-\alpha_N$.

The inverse of the Verblunsky map (for simplicity, restricted to measures with infinite support) is given by
$$Prob'(S^1) \to
\prod_{n=0}^{\infty}\Delta: \mu \to (\alpha_n)$$
where if $p_0 =1$, $p_1(z)$, $p_2(z)$, .. are the monic orthogonal polynomials corresponding
to the nontrivial measure $\mu$, then $\alpha_n =-p_{n+1}(0)^*$ (It is not evident that $|\alpha_n|<1$; this follows
by an argument using the recursion relation (\ref{Szego}) for the $p_n$; see Theorem 1.5.2 of \cite{Simon}).
We do not know how to calculate the composition of maps
$$\prod_{n=1}^{\infty}\Delta \to Prob(S^1) \to \prod_{m=1}^{\infty}\Delta: w \to \mu_w \to \alpha$$
for sequences with multiple nonzero terms. It is even quite complicated to calculate that
$$\alpha_1^*(w_1,w_2,0,..)=-\frac{w_1^* + w_1 w_2^*}{1 + w_1^2w_2^*}$$

Suppose that $\Sigma$ is a cdf which corresponds to $\mu(\alpha)$.
A famous theorem of Szego (see Theorem 8.1 of \cite{Simon}) asserts that
$$\prod_{n=1}^{\infty}(1-|\alpha_n|^2)=\exp(\frac{1}{2\pi}\int \log(\Sigma')d\theta)$$
Consequently $\alpha\in l^2$ if and only if $\int log(\Sigma')d\theta>-\infty$. In particular $\alpha\in l^2$ implies
that $\Sigma'>0$ a.e. $[Leb]$ and hence $\Sigma$ is strictly increasing. But $\alpha\in l^2$ does not imply that
$\Sigma$ is continuous (see Section 2.7 of \cite{Simon}, especially Example 2.7.5). This should be compared with the situation for root subgroup coordinates: in this case
$w\in l^2$ easily implies $\sum_{n>0}\frac1n |w_n|<\infty$, and hence $\Sigma(w)$ is continuous, but it can easily happen
that $\Sigma$ is not strictly increasing. For root subgroup coordinates we also know that if the phases of the $w_n$ are i.i.d.
and uniform, then $w\in l^2$ does imply that $\sigma(w)$ is a homeomorphism. It is apparently not known whether the corresponding statement is true for Verblunsky coefficients. This explains the middle vertical arrows in (\ref{diagram}).

Suppose that
$$\mu(\alpha)=e^f \frac{d\theta}{2\pi}+\mu_s, \qquad f(z)=\sum_{n=-\infty}^{\infty}f_nz^n$$
where $\mu_s$ is perpendicular to the Lebesgue class. Another famous theorem of Szego (with a refinement due to Ibragimov)
asserts that if $\mu_s=0$, then
\begin{equation}\label{Szego2}\prod_{n=1}^{\infty}(1-\vert
\alpha_{n}\vert^2)^{n}=\exp(-\sum_{k=1}^{\infty}k\vert f_k\vert^2)\end{equation}
(see Chapter 6 of \cite{Simon}). This implies a positive answer to the Verblunsky analogue of Question \ref{sobolevquestion} (which explains the right most vertical arrows in (\ref{diagram}).

\begin{corollary}\label{Szego3} $\alpha\in \mathbf w^{1/2}$ if and only if a cdf for $\mu(\alpha)$ belongs
to $W^{1+1/2}Homeo(S^1)$.
\end{corollary}

Szego's theorem (\ref{Szego2}), and a further consideration involving volume, suggests that (an appropriate probabilistic extension of) the map $M:f\to \alpha$ might induce an equality of the two probability measures
\begin{equation}\label{maintheorem}M_*\left(\prod_{k=1}^{\infty}\frac{\beta k}{\pi}e^{-\beta k\vert
f_k\vert^2}d\lambda(f_k)\right)\stackrel{?}{=}\prod_{n=1}^{\infty}
\frac{nb}{\pi}(1-\vert\alpha_{n}\vert^2)^{nb-1}d\lambda(\alpha_{n}) \end{equation}
where $\beta>0$ and $b=1+\beta$. The former measure is essentially so called multiplicative chaos). The striking feature is the shift in exponent, which arises from the following volume calculation:
for finite $N$, on a certain domain $D_N$,
\begin{equation}\label{volume}\chi_{D_N}\prod_{n=1}^N d\lambda(f_n)= \prod_{n=1}^N(1-|\alpha_n|^2)^{n-1} d\lambda(\alpha_n) \end{equation}
(This will appear elsewhere). If (\ref{maintheorem}) is correct, then for the measures on the right hand side of (\ref{maintheorem}), the Verblunsky analogue of the measures in Subsection \ref{applications}, there are two potential phase transitions, $b=1$, and a conjectural transition at $b=1+\sqrt{2}$ ($\beta=\sqrt{2}$) identified in \cite{AJKS}.

\section{Appendix: Smoothness Conditions for Homeomorphisms of $S^1$}\label{AppendixB}

For a map $\sigma: S^1 \to S^1$ which is 1-1 and onto, the inverse is also 1-1
and onto. For such a map, if $\sigma$ is continuous, then the inverse is also continuous.
However given a more general smoothness condition S for self-maps of $S^1$, the set of
homeomorphisms of $S^1$ satisfying condition S may or may not form a subgroup. We are interested in filtering
homeomorphisms in terms of groups, and hence we will want to impose bi-conditions on homeomorphisms and their inverses (We will say that a homeomorphism is bi-S to mean that the homeomorphism and its inverse satisfy condition S).
In this appendix we recall important examples. The most interesting examples
are of groups which arise because they fix some kind of geometric structure.

\subsection{Holder Type Conditions}

(1) For $s = 0$ and for $s \ge 1$, $C^sHomeo(S^1)$ is the topological group of orientation preserving
homeomorphisms of $S^1$ which, together with their inverses, are $C^s$.

(2) For fixed $0 < s < 1$, $C^s$ is not closed with respect to composition, and hence $C^sHomeo(S^1)$ fails to be a group (e.g.
$f(x)=x^3$ is a bi-$C^{1/3}$ homeomorphism of $\mathbb R$ and $f\circ f(x)=x^9$ is not a bi-$C^{1/3}$ homeomorphism). Thus it is problematic, in the group theoretic category, to filter homeomorphisms by Holder smoothness in the range $0<s<1$. The set of bi-Holder continuous homeomorphisms, $C^{0+}Homeo(S^1)$, does form a group.

(3) $\phi\in Homeo(S^1)$ is quasisymmetric if there is a constant $M$ such that
$$\frac1M\le |\frac{\phi(e^{i(\theta+t)})-\phi(e^{i\theta})}{\phi(e^{i(\theta)})-\phi(e^{i(\theta-t)})}|\le M$$
for all $\theta,t$ (see e.g. chapter 16 of \cite{GL}). There are other characterizations: $\phi$ is quasisymmetric if and only if it can be extended to a homeomorphism of the disk which is quasiconformal, if and only if it stabilizes the critical Sobolev class
$ W^{1/2}(S^1)$ (To properly formulate this, it is essential to note that a $W^{1/2}$ equivalence class of measurable functions
has a preferred representative which is defined off of a set of logarithmic capacity zero, and quasisymmetric maps preserve capacity; see \cite{NS}). The inverse of a quasisymmetric homeomorphism is also quasisymmetric. The set $QS(S^1)$ of quasisymmetric homeomorphisms of $S^1$ is a group, and it is also naturally a Banach manifold, but it is not a topological group. Any quasisymmetric homeomorphism is Holder continuous of order $s$, where $s = 1/K$ and the homeomorphism has a $K$-quasiconformal extension to $\Delta$.

The conditions for homeomorphisms which we have considered are summarized as: for $s > 1$
\begin{equation}\label{inclusions}bi-C^s \subset bi-C^1 \subset QS \subset bi-Holder\subset C^0\end{equation}
There is a sharp transition at $s = 1$.

\subsection{Sobolev Type Conditions}

(4) For $S=W^{1,L^1}$, the set $AC(S^1)$ of bi-absolutely continuous homeomorphisms is
a group (The inverse of an absolutely continuous homeomorphism is not necessarily absolutely continuous, hence the condition on the inverse is essential).
$AC(S^1)$ is the group of homeomorphisms which fix the Lebesgue class $[d\theta]$ of $S^1$.
As a consequence this group acts unitarily on half-densities (of the Lebesgue class) on the circle with finite norm.

(5) For $s > 3/2$, $W^{s}Homeo(S^1)$, the set of orientation preserving homeomorphisms of $S^1$
which, together with their inverses, are $W^s := W^{s,L^2}$ (smooth of order s in the $ L^2$ Sobolev sense), is a topological
group. More generally, for a compact $d$-manifold $X$, the set of homeomorphisms
of $X$ which are smooth of order s in the $L^2$ Sobolev sense is a topological group,
provided $s > 1 + d/2$; see \cite{Marsden}.

(6) For $s = 3/2$, the critical $L^2$ Sobolev case, a subtle adjustment in the
definition is apparently required (We do not know how to show by example that this is essential, but this seems certain).
Define
$$W^{1+1/2}Homeo:= \{\sigma\in AC(S^1) : \ln(\Sigma') \in W^{1/2,L^2}\}$$
This is a topological group.

To summarize the group conditions that are most important for us, analogous to (\ref{inclusions}),
there are inclusions
\begin{equation}\label{inclusions}W^{1+s,L^2} \rightarrow W^{1+1/2,L^2}  \rightarrow QS  \rightarrow C^{0+}\rightarrow Homeo(S^1)\end{equation} where $s>1/2$. For $L^2$-Sobolev exponents there is a sharp transition at $s =1/2$, analogous to
the transition for Holder exponents at $s = 1$.

\end{document}